\providecommand{\U}[1]{\protect \rule{.1in}{.1in}}
\newtheorem{theorem}{Theorem}[section]
\newtheorem{lemma}[theorem]{Lemma}
\theoremstyle{remark}
\newtheorem{remark}[theorem]{Remark}
\numberwithin{equation}{section}
\begin{document}
\title[the weight six Eisenstein series]{On the distribution of critical points of the Eisenstein series $E_6$ and monodromy interpretation}
\author{Zhijie Chen}
\address{Department of Mathematical Sciences, Yau Mathematical Sciences Center,
Tsinghua University, Beijing, 100084, China }
\email{zjchen2016@tsinghua.edu.cn}

\begin{abstract}
In previous works joint with Lin, we proved that the Eisenstein series $E_4$ (resp. $E_2$) has at most one critical point in every fundamental domain $\gamma(F_0)$ of $\Gamma_{0}(2)$, where $\gamma(F_0)$ are translates of the basic fundamental domain $F_0$ via the M\"{o}bius transformation of $\gamma\in\Gamma_{0}(2)$. But the method can not work for the Eisenstein series $E_6$.

In this paper, we develop a new approach to show that $E_6'(\tau)$ has exactly either $1$ or $2$ zeros in every fundamental domain $\gamma(F_0)$ of $\Gamma_{0}(2)$. A criterion for $\gamma(F_0)$ containing exactly $2$ zeros is also given.  Furthermore, by mapping all zeros of $E_6'(\tau)$ into $F_0$ via the M\"{o}bius transformations of $\Gamma_{0}(2)$ action, the images give rise to a dense subset of the union of three disjoint smooth curves in $F_0$. A monodromy interpretation of these curves from a complex linear ODE is also given. As a consequence, we give a complete description of the distribution of the zeros of $E_6'(\tau)$ in fundamental domains of $SL(2,\mathbb{Z})$.
\end{abstract}
\maketitle

\section{Introduction}

As the first nontrivial examples of modular forms, the Eisenstein series have always played fundamental
roles in the theory of modular forms and elliptic curves since their discovery in the early 19th century. On the other hand,
besides their numerous applications, the Eisenstein series are rather deep objects by themselves (cf. \cite{KY}).

This paper is the final one in our project of understanding the critical points of the basic Eisenstein series $E_{2k}(\tau)$, $k\leq 3$. Joint with Lin, we studied the distribution of critical points of $E_2(\tau)$ and $E_4(\tau)$ in \cite{CL-E2,CL-E4}, which already had interesting applications in studying the deformation of the spectrum of the Lam\'{e} operator $\frac{d^2}{dx^2}-6\wp(x;\tau)$ as $\tau$ varies (see \cite{CFL-AIM}). However, the method in \cite{CL-E2,CL-E4} does not work for $E_{6}(\tau)$. The aim of this paper is to give a complete description of the critical points of $E_{6}(\tau)$, or equivalently the well-known invariant $g_3(\tau)$ in the theory of elliptic curves, by developing \emph{a different approach} from \cite{CL-E2,CL-E4}.

Denote $\Lambda_{\tau}=\mathbb{Z+Z}\tau$,
where $\tau \in \mathbb{H}=\{ \tau|\operatorname{Im}\tau>0\}$.
Let $\wp(z)=\wp(z;\tau)$ be the Weierstrass  $\wp$-function with periods $\Lambda_{\tau}$, defined by
\[
\wp(z;\tau):=\frac{1}{z^{2}}+\sum_{\omega \in \Lambda_{\tau}\backslash
\{0\}}\left(  \frac{1}{(z-\omega)^{2}}-\frac{1}{\omega^{2}}\right).
\]
It is well known that $\wp(z;\tau)$ satisfies the following cubic equation:
\[\wp'(z;\tau)^2=4\wp(z;\tau)^3-g_2(\tau)\wp(z;\tau)-g_3(\tau).\]
Here $g_2(\tau), g_3(\tau)$ are invariants of the elliptic curve $E_{\tau}:=\mathbb{C}/\Lambda_{\tau}$, which are just multiples of the basic two Eisenstein series $G_4(\tau), G_6(\tau)$, or equivalently the normalized $E_4(\tau), E_6(\tau)$, respectively: \[g_2(\tau)=\frac{4\pi^4}{3}E_4(\tau)=60 G_4(\tau):=60\mathop{{\sum}'}_{(m,n)\in\mathbb{Z}^2}\frac{1}{(m\tau+n)^4},\]
\[ g_3(\tau)=\frac{8\pi^6}{27}E_6(\tau)=140 G_6(\tau):=140\mathop{{\sum}'}_{(m,n)\in\mathbb{Z}^2}\frac{1}{(m\tau+n)^6},\]
where $\mathop{{\sum}'}$ means to sum over $(n,m)\in \mathbb{Z}^2\setminus\{(0,0)\}$ conventionally. The derivative of $E_6(\tau)$ is given by Ramanujan's formula (cf. \cite{Ramanujan})
\begin{equation}\label{Rama-for}E_6'(\tau)=\pi i(E_2(\tau)E_6(\tau)-E_4(\tau)^2),\end{equation}
where $E_{2}(\tau)$ is the Eisenstein series of weight $2$ defined by
\[E_2(\tau)
:=\frac{3}{\pi^2}\sum_{m=-\infty}^{\infty}\mathop{{\sum}'}_{n=-\infty}^{\infty}\frac{1}{(m\tau+n)^2}.\]

It is well known that $E_4(\tau), E_6(\tau)$ are both modular forms for $SL(2,\mathbb{Z})$ of weight $4, 6$ respectively, while $E_2(\tau)$ is not a modular form but only a \emph{quasimodular form}, and so is $E_6'(\tau)$. This indicates that the set of zeros of $E_6'(\tau)$ is \emph{not stable} under the modular
group $SL(2,\mathbb{Z})$ and hence difficult to study. There are many recent works studying the zeros of quasimodular forms including critical points of modular forms; see e.g. \cite{BG,EIBS,Kaneko,SS,WY} and references therein. In particular, Saber and Sebbar \cite{SS} proved that \emph{for each modular form $f$ for a subgroup of $SL(2,\mathbb{Z})$, its derivative $f'$ has infinitely many inequivalent zeros and all, but a finite number, are simple}. As an example, they showed that $E_6'(\tau)$ has infinitely many inequivalent zeros which are all simple. However, the distribution of the zeros of $E_6'(\tau)$ is still far from being understood.

Due to the fundamental importance of $E_{6}(\tau)$ in the theory of modular forms (For example, $E_2, E_4$ and $E_6$ play crucial roles in Viazovska's proof of the sphere packing problem \cite{Viazovska}),
in this paper we develop our own approach to give a complete description of its critical points. To compare with $E_2(\tau)$ and $E_4(\tau)$,
we would like to state our results in fundamental domains of $\Gamma_0(2)$ first.

\subsection{Distribution in fundamental domains of $\Gamma_0(2)$}
Recall that $\Gamma_{0}(2)$ is the congruence subgroup of $SL(2,\mathbb{Z})$ defined by
\[
\Gamma_{0}(2):=\left \{  \left.
\bigl(\begin{smallmatrix}a & b\\
c & d\end{smallmatrix}\bigr)
\in SL(2,\mathbb{Z})\right \vert c\equiv0\text{ }\operatorname{mod}2\right \},
\]
and $F_{0}$ is the basic fundamental domain of $\Gamma
_{0}(2)$ given by
\[
F_{0}:=\{ \tau \in \mathbb{H}\ |\ 0\leqslant  \text{Re}  \tau \leqslant
1\  \text{and}\ |\tau-\tfrac{1}{2}|\geqslant \tfrac{1}{2}\}.
\]
Then for any $\gamma=%
\bigl(\begin{smallmatrix}a & b\\
c & d\end{smallmatrix}\bigr)
\in \Gamma_{0}(2)/\{ \pm I_{2}\}$ (i.e. {\it consider $\gamma$ and $-\gamma$ to be
the same}),%
\begin{equation}\label{gammaF0}
\gamma (F_{0}):=\left \{  \left.  \gamma \cdot \tau:=\tfrac{a\tau+b}{c\tau
+d}\right \vert \tau \in F_{0}\right \}  =(-\gamma)(F_{0})%
\end{equation}
is also a fundamental domain of $\Gamma_{0}(2)$. Note that $\gamma ( F_{0})=F_{0}+m$ for some $m\in \mathbb{Z}$ if and only if $c=0$.

Our first result shows that the critical points of $E_6(\tau)$ satisfy the following distribution.

\begin{theorem}\label{thm-number}
Let $\gamma (F_{0})$ be a fundamental domain of $\Gamma_{0}(2)$ with $\gamma=
\bigl(\begin{smallmatrix}a & b\\
c & d\end{smallmatrix}\bigr)
\in \Gamma_{0}(2)/\{ \pm I_{2}\}$. Then the following statements hold.
\begin{itemize}
\item[(1)]$E_6(\tau)$ has exactly one critical point in $\gamma(F_0)$ if $c=0$.
\item[(2)] $E_6(\tau)$ has exactly two different critical points in $\gamma(F_0)$ if $c\neq 0$.
\end{itemize}
\end{theorem}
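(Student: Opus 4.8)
The plan is to transfer the whole count to the single fixed domain $F_0$, exploiting the fact that $E_6'$ is quasimodular rather than modular. Starting from Ramanujan's formula \eqref{Rama-for}, the transformation law $E_2(\gamma\tau)=(c\tau+d)^2E_2(\tau)-\frac{6i}{\pi}c(c\tau+d)$ in the paper's normalization, and the weight $4$ and weight $6$ modularity of $E_4,E_6$, a direct substitution gives
\[
E_6'(\gamma\tau)=(c\tau+d)^{8}E_6'(\tau)+6c(c\tau+d)^{7}E_6(\tau)=(c\tau+d)^{7}f_\gamma(\tau),
\]
where $f_\gamma(\tau):=(c\tau+d)E_6'(\tau)+6cE_6(\tau)$. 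Since $c\tau+d$ never vanishes on $\mathbb{H}$, the M\"obius map $\tau\mapsto\gamma\tau$ carries the zeros of $f_\gamma$ in $F_0$ bijectively onto the critical points of $E_6$ in $\gamma(F_0)$. Moreover the result of Saber--Sebbar quoted in the excerpt, that every zero of $E_6'$ is simple, forces every zero of $f_\gamma$ to be simple as well, so the word \emph{different} in part (2) is automatic once the multiplicity count is shown to be $2$. Hence Theorem~\ref{thm-number} reduces to two statements on the fixed domain $F_0$: when $c=0$ one has $f_\gamma=\pm E_6'$, so I must show $E_6'$ has exactly one zero in $F_0$; when $c\neq0$ I must show $f_\gamma$ has exactly two zeros in $F_0$.

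To count these zeros I would apply the argument principle to $f_\gamma$ along $\partial F_0$, truncated at a large height $\operatorname{Im}\tau=Y$ and with small arcs excised around the order-two elliptic point $\frac{1+i}{2}$ and the cusps $\infty$ and $0\sim1$. The two vertical sides $\operatorname{Re}\tau=0,1$ are identified by $T:\tau\mapsto\tau+1$; because $E_4,E_6,E_6'$ are $1$-periodic, in the case $c=0$ the integrand $(E_6')'/E_6'$ is periodic and the two sides cancel exactly, whereas for $c\neq0$ one only has $f_\gamma(\tau+1)=f_\gamma(\tau)+cE_6'(\tau)$, so their contributions no longer cancel and must be tracked. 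One clear signature separating the two cases is the cusp $\infty$: from $E_6'=-1008\pi i\,q+O(q^{2})$ one sees that for $c=0$ the function $E_6'$ vanishes to first order in the local parameter $q$, so the top segment carries one unit of winding as $Y\to\infty$, while for $c\neq0$ one has $f_\gamma\to6c\neq0$, so the top carries none.

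The remaining ingredient is the lower boundary: the semicircle $|\tau-\tfrac12|=\tfrac12$, whose two halves are paired by the order-two element $\bigl(\begin{smallmatrix}1&-1\\2&-1\end{smallmatrix}\bigr)\in\Gamma_0(2)$ fixing $\frac{1+i}{2}$, together with the cusp $0\sim1$ at its endpoints. Since $f_\gamma$ is not modular, the two arc-contributions do not cancel, and one must evaluate the net change of $\arg f_\gamma$ along the arc together with the excision contributions at the elliptic point and at $0\sim1$ (where the expansion in the local uniformizer governs the behaviour). Assembling the side, cusp, and arc contributions should yield total winding $1$ for $c=0$ and $2$ for $c\neq0$.

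The main obstacle is precisely this assembly: the failure of cancellation on the paired sides and arcs, caused by the quasimodular correction $6cE_6$ (equivalently the term $cE_6'$ in $f_\gamma(\tau+1)$), means the winding must be pinned down exactly rather than merely up to the obvious contributions, and one must rule out zeros lying on $\partial F_0$ itself and control the sign and monotonicity of $\arg f_\gamma$ along the semicircle uniformly in $(c,d)$. For $c\neq0$ an economical route would be to regard $f_{c,d}(\tau)=(c\tau+d)E_6'(\tau)+6cE_6(\tau)$ as a real two-parameter family and argue by Rouch\'e and continuity that the zero count in $F_0$ is locally constant as $(c,d)$ varies without a zero crossing $\partial F_0$, thereby reducing to a single explicit representative. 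I expect that making these winding numbers rigorous is exactly where the finer structure developed later in the paper is needed, namely the three smooth curves carrying the pulled-back critical points and their ODE and monodromy description, since that structure locates the zeros precisely enough to render the boundary analysis conclusive.
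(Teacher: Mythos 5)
Your reduction is correct and is in fact the same one the paper uses: your $f_\gamma(\tau)=(c\tau+d)E_6'(\tau)+6cE_6(\tau)$ is a nonzero constant multiple of the paper's $f_{-d/c}(\tau)=(C-\tau)(g_2^2-18\eta_1 g_3)(\tau)-36\pi i g_3(\tau)$ with $C=-d/c$ (use $g_3'=\frac{-i}{6\pi}(g_2^2-18\eta_1 g_3)$ and $\eta_2=\tau\eta_1-2\pi i$), and the simplicity of the zeros is likewise handled. But everything after that is a genuine gap, and you say so yourself: the ``assembly'' of the winding contributions from the semicircle $|\tau-\frac12|=\frac12$ and the cusps $0\sim1$ is the entire content of the theorem, and neither of your proposed routes closes it. A direct argument-principle computation for a fixed $f_\gamma$ founders on exactly the points you list (non-cancellation of the paired arcs, behaviour in the local uniformizer at $0$ and $1$, ruling out boundary zeros), and your fallback --- Rouch\'e/continuity in $(c,d)$ down to ``a single explicit representative'' --- has no representative at which the count is directly computable: there is no special $\gamma$ with $c\neq 0$ for which the two zeros are visible by inspection. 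Your closing guess that the curves and the ODE monodromy of the later sections are the missing input is also backwards: in the paper those are \emph{consequences} of Theorem \ref{thm-number}, not ingredients of its proof.

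What the paper actually does is a deformation argument with a computable degenerate endpoint, run twice. For $c=0$ it deforms $h_t=g_2^2-18t\eta_1 g_3$ from $t=0$, where $h_0=g_2^2$ has a single double zero at $e^{\pi i/3}$, to $t=1$; non-vanishing on $\partial F_0\cap\mathbb{H}$ for all $t$ is obtained by transporting the boundary to the imaginary axis, where $g_2,g_3,\eta_1$ are real and $g_2^2-18t\eta_1g_3>0$, so the count is $2$ for all $t<1$, and a real-variable monotonicity argument on the line $\operatorname{Re}\tau=\frac12$ (sign changes of $h_t(\frac12+ib)$ at $b=\frac12,\frac{\sqrt3}{2}$, plus $\frac{d}{db}h_1<0$ at any zero) shows that as $t\to1$ one zero escapes to the cusp $\infty$ (where $h_t(\infty)=\frac{16}{9}(1-t)\pi^8\to0$) and exactly one, $\tau_\infty=\frac12+ib_\infty$, survives. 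For $c\neq0$ it embeds the discrete family into $f_C$, $C\in\mathbb{R}$, proves the count is constant on each of $(-\infty,0)$, $(0,1)$, $(1,+\infty)$ separately (the cusp asymptotics $f_C\to\infty$ at $0$ and $1$ fail precisely at $C=0$ and $C=1$, which is why the intervals must be separated --- at those values the count genuinely drops to one), and then computes the count in the regime $|C|\to\infty$: the $q$-expansion $\phi(\tau)=\tau+\frac{i}{168\pi}q^{-1}+O(1)$ produces one zero drifting to $\frac14+i\infty$ or $\frac34+i\infty$, and the simple zero $\tau_\infty$ of $g_2^2-18\eta_1g_3$ from part (1) produces a second zero converging to $\tau_\infty$; the interval $(0,1)$ is then handled by the substitution $\tau\mapsto\frac{1}{1-\tau}$, $C\mapsto\frac{1}{1-C}$. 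So part (2) is derived from part (1), and the only winding-number input ever needed is the constancy of a zero count along a homotopy with controlled boundary and cusp behaviour --- never the absolute winding of a single $f_\gamma$ around $\partial F_0$.
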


\begin{remark}\label{rmk-ss} Saber and Sebbar \cite[p 1787]{SS} said that on the line $\operatorname{Re} \tau=1/2$, $E_6'(\tau)$ is purely imaginary and has a unique zero $\tau_\infty=1/2+i0.6341269863$. Noting $\tau_\infty\in F_0$, our Theorem \ref{thm-number}-(1) improves this result by showing that this $\tau_\infty$ is indeed the unique zero of $E_6'(\tau)$ in $F_0$, i.e. $E_6'(\tau)\neq 0$ in $F_0\setminus\{\tau_\infty\}$.
\end{remark}

A refined version of Theorem \ref{thm-number} will be given in Theorem \ref{thm-number-1}, where we will see that the critical points of $E_6(\tau)$ actually lie in $\gamma(\mathring{F}_{0})$, where $\mathring{F}_{0}=F_{0}\backslash \partial F_{0}$ denotes the
set of interior points of $F_{0}$.
It is interesting to compare Theorem \ref{thm-number} with our previous results concerning $E_4(\tau)$ and $E_2(\tau)$.

\medskip
\noindent{\bf Theorem A.} \cite{CL-E2,CL-E4} {\it
Under the same notations in Theorem \ref{thm-number}, the following statements hold.
\begin{itemize}
\item[(1)] $E_{4}(\tau)$ (resp. $E_2(\tau)$) has no critical points in $\gamma (F_{0})$ if $c=0$.
\item[(2)]
$E_4(\tau)$ (resp. $E_2(\tau)$) has exactly one critical point in $\gamma(F_0)$ if $c\neq 0$.
\end{itemize}
}

\begin{remark}
Theorem \ref{thm-number} shows that the situation of $E_6(\tau)$ is different from those of $E_2(\tau)$ and $E_4(\tau)$. Furthermore, the main ideas of the proofs are also different. The proof for $E_4(\tau)$ (resp. $E_2(\tau)$) relies essentially on an auxiliary pre-modular form of weight $6$ (resp. of weight $3$) introduced in \cite{LW2} (See \cite{LW2} for the definition of pre-modular forms). In particular,
\begin{equation} \label{eq: data}
  \parbox{\dimexpr\linewidth-4em}{
  Theorem A-(1) was obtained as a consequence of Theorem A-(2).
  }
\end{equation}
 Unfortunately, we still do not know what the auxiliary pre-modular form related to $E_6(\tau)$ is, and so the above idea in \cite{CL-E2,CL-E4} does not work. In this paper, we develop a different idea for $E_6(\tau)$. In contrast with \eqref{eq: data}, we apply the method of continuity to prove Theorem \ref{thm-number}-(1) first and then prove Theorem \ref{thm-number}-(2) as a consequence of (1). 
\end{remark}

In view of Theorem \ref{thm-number}, we can transform every critical point of $E_{6}(\tau)$ via the M\"{o}bius transformation of $\Gamma_{0}(2)$ action to locate it in $F_0$. Denote \emph{the collection of such corresponding points in $F_0$ by $\mathcal{D}_0$}, i.e. $\mathcal{D}_0$ is the set of those points in $F_0$ which are obtained by
transforming the critical points of $E_{6}(\tau)$ via the M\"{o}bius transformations of $\Gamma_{0}(2)$ action. A fundamental question related to the distribution of the critical points is: {\it What is the geometry of the set $\mathcal{D}_0$}? We answer this question in our second result.

\begin{theorem}\label{Location} There exist three disjoint smooth curves $\mathcal{C}_1, \mathcal{C}_2, \mathcal{C}_3$ in $F_0$ satisfying
\[\partial \mathcal{C}_{1}=\{0,1\},\;\;\partial \mathcal{C}_{2}=\{0,\tfrac{1}{4}+i\infty\}, \;\;\partial \mathcal{C}_{3}=\{1,\tfrac{3}{4}+i\infty\}\]
such that
\begin{equation}\label{cf0}\mathcal{D}_0\subset \mathcal{C}_{1}\cup\mathcal{C}_{2}\cup\mathcal{C}_{3}=\overline{\mathcal{D}_0}\cap F_0= \overline{\mathcal{D}_0}\setminus\{0,1,\infty\}.\end{equation}
\end{theorem}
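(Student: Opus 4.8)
The plan is to convert membership in $\mathcal{D}_0$ into a single transcendental equation and then to study the real-analytic locus it cuts out in $F_0$. First I would exploit the quasimodularity of $E_2$ against the modularity of $E_4,E_6$ to see how the equation $E_6'=0$ transforms under $\gamma=\bigl(\begin{smallmatrix}a&b\\c&d\end{smallmatrix}\bigr)\in\Gamma_0(2)$. Using $E_2(\gamma\tau)=(c\tau+d)^2E_2(\tau)-\tfrac{6ic}{\pi}(c\tau+d)$ and $E_4(\gamma\tau)=(c\tau+d)^4E_4(\tau)$, $E_6(\gamma\tau)=(c\tau+d)^6E_6(\tau)$, Ramanujan's formula \eqref{Rama-for} yields $E_6'(\gamma\tau)=(c\tau+d)^8E_6'(\tau)+6c(c\tau+d)^7E_6(\tau)$. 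Hence a critical point in $\gamma(F_0)$ corresponds to $\tau\in F_0$ with $(c\tau+d)E_6'(\tau)+6cE_6(\tau)=0$; writing $\Phi(\tau):=\tau+6E_6(\tau)/E_6'(\tau)$ this says $\Phi(\tau)=-d/c$ when $c\neq0$, and $E_6'(\tau)=0$ (i.e. $\Phi(\tau)=\infty$, $\tau=\tau_\infty$) when $c=0$. As $\gamma$ runs over $\Gamma_0(2)$ the numbers $-d/c$ run exactly over the set $\mathcal{Q}$ of rationals with odd numerator and even denominator in lowest terms, a dense subset of $\mathbb{R}$. Thus $\mathcal{D}_0=\{\tau\in F_0:\Phi(\tau)\in\mathcal{Q}\}\cup\{\tau_\infty\}$, and in particular $\mathcal{D}_0$ lies in the real locus $L:=\{\tau\in F_0:\operatorname{Im}\Phi(\tau)=0\}$.

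Next I would show that $L$ is a disjoint union of exactly three smooth arcs with the prescribed endpoints. Both regularity and the crucial monotonicity follow from one fact: $\Phi'(\tau)=7-6E_6E_6''/(E_6')^2\neq0$ on $L$. Where $\Phi'\neq0$ the implicit function theorem makes $\{\operatorname{Im}\Phi=0\}$ a smooth $1$-manifold, and the Cauchy--Riemann equations force the tangent of a level set of $\operatorname{Im}\Phi$ to be parallel to $\nabla\operatorname{Re}\Phi$ with $|d\,\operatorname{Re}\Phi/ds|=|\Phi'|$, so $\operatorname{Re}\Phi$ is strictly monotone along $L$; near the simple pole $\tau_\infty$ of $\Phi$ the locus is a single smooth arc through the point. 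The endpoints come from boundary analysis. The expansion $E_6=1-504q-\cdots$ gives $\Phi(\tau)=\tau+\tfrac{i}{168\pi}e^{2\pi y}e^{-2\pi i x}+\cdots$ as $y=\operatorname{Im}\tau\to\infty$, so $\operatorname{Im}\Phi=0$ forces $\cos2\pi x\to0$, i.e. $\operatorname{Re}\tau\to\tfrac14$ or $\tfrac34$; and applying $\tau\mapsto-1/\tau$ (resp. the relation $\Phi(\tau+1)=\Phi(\tau)+1$) shows $\Phi(\tau)\to0$ as $\tau\to0$ and $\Phi(\tau)\to1$ as $\tau\to1$. These produce precisely the three boundary pairs $\{0,1\}$, $\{0,\tfrac14+i\infty\}$, $\{1,\tfrac34+i\infty\}$. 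To rule out spurious components and fix the count at three, I would invoke Theorem \ref{thm-number}: it forces $\Phi(\tau)=p$ to have exactly one or two solutions in $F_0$ for every $p$, which pins down the number of sheets of the map $L\to\mathbb{R}\cup\{\infty\}$ and hence the number of arcs.

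Once $L=\mathcal{C}_1\cup\mathcal{C}_2\cup\mathcal{C}_3$ is established with $\operatorname{Re}\Phi$ strictly monotone (as a map into $\mathbb{R}\cup\{\infty\}$) along each $\mathcal{C}_i$, the restriction $\Phi|_{\mathcal{C}_i}$ is an open map onto an arc of $\mathbb{R}\cup\{\infty\}$, so the preimage of the dense set $\mathcal{Q}$ is dense in $\mathcal{C}_i$. Hence $\mathcal{D}_0$ is dense in $\mathcal{C}_1\cup\mathcal{C}_2\cup\mathcal{C}_3$, giving $\mathcal{C}_1\cup\mathcal{C}_2\cup\mathcal{C}_3=\overline{\mathcal{D}_0}\cap F_0$. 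Since the only additional limit points are the three cusps, where $\Phi\to0,1,\infty$ lie in $\overline{\mathcal{Q}}\setminus\mathcal{Q}$, the closure adds exactly $\{0,1,\infty\}$, which yields \eqref{cf0}.

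The main obstacle is the global control of $L$ in the second step: proving the inequality $7(E_6')^2\neq6E_6E_6''$ everywhere on the real locus (a nonvanishing statement that must hold on a noncompact region), and excluding extra components so that exactly three arcs with precisely the claimed endpoints occur. I expect to rely on Theorem \ref{thm-number} together with a continuity or degree argument for the counting, and on careful $q$-expansion estimates (or the monodromy/ODE description promised in the abstract) to control smoothness and the asymptotics at the cusps; the behaviour at the pole $\tau_\infty$ and at the corner cusps $0,1$ is where I anticipate the greatest technical difficulty.
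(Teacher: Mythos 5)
Your $\Phi(\tau)=\tau+6E_6(\tau)/E_6'(\tau)$ is exactly the paper's $\phi(\tau)=\tau+36\pi i g_3/(g_2^2-18\eta_1g_3)$ (since $g_2^2-18\eta_1g_3=6\pi i g_3'$ by \eqref{dd-eta11}), and your reduction of $\mathcal{D}_0$ to the level sets $\Phi(\tau)=-d/c$ over the dense set of such rationals is the paper's identity \eqref{g3-fC}; so the framework is the same one the paper uses. However, the two steps you defer as "obstacles" are genuine gaps, and they are precisely where the paper's proof lives. The first is the nonvanishing $7(E_6')^2\neq 6E_6E_6''$ on the real locus. You treat this as a hard global inequality on a noncompact region, but it is one algebraic identity away from being immediate: differentiating with the Ramanujan system for $\eta_1',g_2',g_3'$, the paper shows (see \eqref{fcde}) that at any point of the locus $\phi(\tau)=C\in\mathbb{R}$ one has $\phi'(\tau)=7\,g_2(g_2^3-27g_3^2)/(g_2^2-18\eta_1g_3)^2$; the discriminant $g_2^3-27g_3^2$ never vanishes, and $g_2(\tau)=0$ forces $\tau=\frac12+\frac{\sqrt3}{2}i$, where the explicit value $\eta_1=2\pi/\sqrt3$ gives $\phi(\tau)=\frac12-\frac{\sqrt3}{2}i\notin\mathbb{R}$. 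Without this computation your smoothness and monotonicity claims along $L$ have no support.

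The second gap is the assembly of $L$ into exactly three arcs with the stated endpoint pairs. Knowing that each fiber $\Phi^{-1}(C)$ has two points for $C\neq0,1$ and that the only possible boundary points are $0$, $1$, $\frac14+i\infty$, $\frac34+i\infty$ does not determine which endpoints are joined to which, nor rule out closed components; your appeal to "the number of sheets" is not yet an argument. The paper's extra input is Lemma \ref{line12}: $f_C(\tau)\neq0$ on the line $\operatorname{Re}\tau=\frac12$ for every real $C$ (proved by a separate monotonicity computation of $\varphi(b)=\operatorname{Im}\phi(\frac12+ib)$). This separates the two fiber points into the half-domains $\operatorname{Re}\tau\lessgtr\frac12$, yielding two global single-valued branches $\tau_<(C)$, $\tau_>(C)$, and the endpoint pairing is then read off from the limits of these branches as $C\to0,1,\pm\infty$, using that the unique zeros of $f_0$ and $f_1$ are the interior points $\tau_0=\frac{1}{1-\tau_\infty}$ and $\tau_1=\frac{\tau_\infty-1}{\tau_\infty}$. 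You would need to supply an equivalent separation-and-gluing argument; the rest of your outline (denseness via monotone $\operatorname{Re}\Phi$, cusp asymptotics from the $q$-expansion, the simple pole at $\tau_\infty$) matches the paper and is fine.
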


\begin{remark} Theorem \ref{Location} shows that $\mathcal{D}_0$ is \emph{a dense subset} of the union of the three curves, the boundaries of which are precisely the cusps $\{0,1,\infty\}$ of $F_0$. We will see in Section \ref{mono-int} that the three curves have interesting monodromy meanings from a complex linear ODE.
We will also prove in Section \ref{smoothcurves} that $\mathcal{C}_1$ can be parameterized by $C\in(1,+\infty)\cup \{\infty\}\cup(-\infty, 0)$ (resp. $\mathcal{C}_2$ is parameterized by $C\in(0, +\infty)$ and $\mathcal{C}_3$ by $C\in(-\infty, 1)$) via the following identity
\begin{equation}\label{c=tau}C=\tau+\frac{36\pi i g_3(\tau)}{(g_{2}^2-18\eta_1g_3)(\tau)},\quad \tau\in \mathcal{C}_1\cup\mathcal{C}_{2}\cup\mathcal{C}_{3}.\end{equation}
Here $\eta_1(\tau):=2\zeta(\frac{1}{2};\tau)$ is a quasi-period of the Weierstrass zeta
function $\zeta(z)=\zeta(z;\tau):=-\int^{z}\wp(\xi;\tau)d\xi$. Indeed it is a multiple of the Eisenstein series $E_{2}(\tau)$: $\eta_1(\tau)=\frac{\pi^2}{3}E_2(\tau)$. The relation between (\ref{c=tau}) and $E_6'(\tau)$ comes from Ramanujan's formula (\ref{Rama-for}), which can be written in terms of $g_2, g_3, \eta_1$ as follows (see also \cite[p.704]{YB}):
\begin{equation}\label{dd-eta11}
g_{3}^{\prime}(\tau)=\tfrac{-i}{6\pi}\left(g_2(\tau)^2-18\eta_1(\tau)g_{3}(\tau)\right).
\end{equation}
It is interesting to compare Theorem \ref{Location} with that of $E_{4}(\tau)$ proved in \cite{CL-E4}: Under the M\"{o}bius transformations of $\Gamma_{0}(2)$ action, the images of all critical points of $E_{4}(\tau)$ in $F_0$ form a dense subset of the union of three disjoint smooth curves $\mathcal{C}_-, \mathcal{C}_0, \mathcal{C}_+$ in $F_0$. The different point is that $\mathcal{C}_-, \mathcal{C}_0, \mathcal{C}_+$ are parameterized by $C\in (-\infty, 0), C\in (0, 1)$ and $C\in (1,+\infty)$, respectively, via the following identity
\[
C=\tau-\frac{4\pi i g_2(\tau)}{(2\eta_{1}g_{2}-3g_3)(\tau)},\quad \tau\in \mathcal{C}_-\cup\mathcal{C}_0\cup\mathcal{C}_+,
\]
i.e. the ranges of the parameter $C$ are disjoint for any two of $\{\mathcal{C}_-,\mathcal{C}_0, \mathcal{C}_+\}$ for $E_4$; but the ranges of the parameter $C$ have intersections for any two of $\{\mathcal{C}_{1}, \mathcal{C}_{2}, \mathcal{C}_{3}\}$ for $E_6$.
\end{remark}

\subsection{Distribution in fundamental domains of $SL(2,\mathbb{Z})$}
In view of Theorem \ref{thm-number}, it is natural for us to study further the distribution of critical points in fundamental domains of the modular group $SL(2,\mathbb{Z})$ for two reasons: (i) $E_6(\tau)$ is a modular form for $SL(2,\mathbb{Z})$; (ii) A fundamental domain of $SL(2,\mathbb{Z})$ is smaller than that of $\Gamma_0(2)$, so we wonder whether the two zeros of $E_6'(\tau)$ in $\gamma(F_0)$ with $c\neq0$ can be separated or not by different fundamental domains of $SL(2,\mathbb{Z})$.
It is convenient for us to choose
\begin{equation}\label{F-fun}
F:=\{ \tau \in \mathbb{H}\ |\ 0\leqslant \operatorname{Re}\tau\leqslant 1,|\tau|\geqslant
1,|\tau-1|\geqslant1\}
\end{equation}
to be the basic fundamental domain of $SL(2,\mathbb{Z})$, because $F\subset F_0$
and (see Figure 1, which is copied from \cite{CKLW})
\begin{equation}
F_{0}=F\cup \gamma_{1}(F)\cup \gamma_{2}(F),\quad \gamma_{1}:=\bigl(\begin{smallmatrix}0 & 1\\
-1 & 1\end{smallmatrix}\bigr)
,\  \  \gamma_{2}:=\bigl(\begin{smallmatrix}1 & -1\\
1 & 0\end{smallmatrix}\bigr). \label{F-de}%
\end{equation}

\begin{figure}[btp]\label{fundamental-dom}
\includegraphics[width=3.6in]{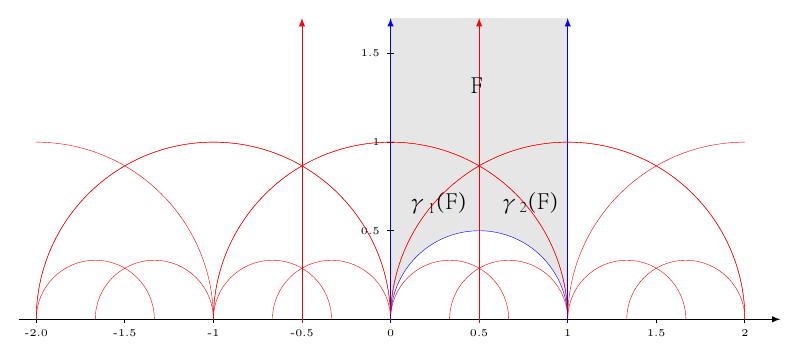}\caption{$F_{0}=F\cup
\gamma_{1}(F)\cup\gamma_{2}(F)$.}%
\end{figure}

Our following result shows that $E_6(\tau)$ has at most one critical point in $\gamma(F)$ for any $\gamma\in SL(2,\mathbb{Z})$.
\begin{theorem}\label{thm-number-F}
Let $\gamma (F)$ be a fundamental domain of $SL(2,\mathbb{Z})$ with $\gamma=
\bigl(\begin{smallmatrix}a & b\\
c & d\end{smallmatrix}\bigr)
\in SL(2,\mathbb{Z})/\{ \pm I_{2}\}$. Then the following statements hold.
\begin{itemize}
\item[(1)]$E_6(\tau)$ has no critical points in $\gamma(F)$ if $\frac{-d}{c}\in (0,1)\cup\{\infty\}$.
\item[(2)] $E_6(\tau)$ has exactly one critical point in $\gamma(F)$ if $\frac{-d}{c}\in (-\infty, 0]\cup [1,+\infty)$. Moreover, the unique critical point lies on the boundary $\partial\gamma(F)=\gamma(\partial F)$ if and only if $\frac{-d}{c}\in \{0,1\}$.
\end{itemize}
\end{theorem}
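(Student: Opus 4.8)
The plan is to treat the function $C(\tau)$ of \eqref{c=tau} as an $SL(2,\mathbb{Z})$-equivariant developing map and reduce the counting to its fibres over $\mathbb{R}\cup\{\infty\}$ inside $F$. First I would show
\[C(g\tau)=g\cdot C(\tau)=\frac{aC(\tau)+b}{cC(\tau)+d},\qquad g=\bigl(\begin{smallmatrix}a&b\\c&d\end{smallmatrix}\bigr)\in SL(2,\mathbb{Z}).\]
This follows by a direct substitution into \eqref{c=tau}, using \eqref{dd-eta11}, the transformations $g_2(g\tau)=(c\tau+d)^4g_2$, $g_3(g\tau)=(c\tau+d)^6g_3$, and the quasimodularity $\eta_1(g\tau)=(c\tau+d)^2\eta_1-2\pi i c(c\tau+d)$; the correction term in \eqref{c=tau} collapses $C(g\tau)$ to $\tfrac{aC+b}{cC+d}$. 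By Ramanujan's formula \eqref{Rama-for}, at a zero of $E_6$ one has $E_6'=-\pi i E_4^2\neq0$, so $g_3\neq0$ at every critical point; hence the poles of the meromorphic $C$ are exactly the critical points, $C^{-1}(\infty)=\{E_6'=0\}$. Writing a critical point in $g(F)$ as $g\tau$ with $\tau\in F$, equivariance gives $C(g\tau)=\infty$ iff $C(\tau)=g^{-1}(\infty)=-d/c$, so that the number of critical points in $g(F)$ equals $\#\{\tau\in F:\ C(\tau)=-d/c\}$; it remains to understand the fibre of $C|_F$ over the real value $-d/c$ (over $\infty$ when $c=0$).

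Since any such $\tau$ has $C(\tau)\in\mathbb{R}\cup\{\infty\}$, it lies on $C^{-1}(\mathbb{R}\cup\{\infty\})\cap F_0=\mathcal{C}_1\cup\mathcal{C}_2\cup\mathcal{C}_3$ by Theorem \ref{Location}. I would therefore pin down which portions of the three curves meet $F$ and which values of $C$ they realise, using the reflection symmetries of $C$. From the real Fourier coefficients of $g_2,g_3,\eta_1$ one gets $C(-\overline\tau)=-\overline{C(\tau)}$ and $C(1-\overline\tau)=1-\overline{C(\tau)}$; together with $S$-equivariance on the arc $|\tau|=1$ (where $S\tau=-1/\tau=-\overline\tau$) these force $\operatorname{Re}C=0$ on the edge $\operatorname{Re}\tau=0$, $\operatorname{Re}C=1$ on $\operatorname{Re}\tau=1$, $|C|=1$ on the arc $|\tau|=1$, and $|C-1|=1$ on the arc $|\tau-1|=1$. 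At the corners $C(i)=i$ and $C(1+i)=1+i$, while at $\rho=e^{i\pi/3}$ the relation $g_2(\rho)=0$ gives $C(\rho)=\rho-\tfrac{2\pi i}{\eta_1(\rho)}$; as $\rho$ is fixed by an order-$3$ element, equivariance forces $C(\rho)\in\{\rho,\overline\rho\}$, and the special value $\eta_1(\rho)=\tfrac{2\pi}{\sqrt3}$ (i.e. $E_2(\rho)=\tfrac{2\sqrt3}{\pi}$) yields $C(\rho)=\overline\rho$, which lies in the \emph{lower} half-plane — the mechanism that separates $\operatorname{Im}C>0$ near the cusp from $\operatorname{Im}C<0$ near $\rho$.

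Matching the parametrisation ranges recorded after Theorem \ref{Location} with this boundary data, I would conclude: $\mathcal{C}_2$ (cusps $0,\infty$; range $C\in(0,\infty)$, approaching the cusp along $\operatorname{Re}\tau=\tfrac14$) crosses $\partial F$ along $|\tau|=1$ at the unique boundary point with $C=1$, so $\mathcal{C}_2\cap F$ realises exactly $C\in(1,\infty)$; symmetrically $\mathcal{C}_3$ (cusps $1,\infty$; range $(-\infty,1)$) crosses $\partial F$ along $|\tau-1|=1$ at the point with $C=0$, and $\mathcal{C}_3\cap F$ realises $C\in(-\infty,0)$; finally $\mathcal{C}_1$ (range $(1,\infty)\cup\{\infty\}\cup(-\infty,0)$) passes through its pole $\tau_\infty\in\{\operatorname{Re}\tau=\tfrac12\}$ (Remark \ref{rmk-ss}), which lies on $\partial\gamma_1(F)\cap\partial\gamma_2(F)$ and \emph{off} $F$, so $\mathcal{C}_1\cap F=\varnothing$. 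Since $C$ is injective on each curve (it is the parameter of \eqref{c=tau}), $C|_F$ attains each value of $(-\infty,0)\cup(1,\infty)$ exactly once in the interior, attains $0$ and $1$ only at the two boundary crossing points, and attains no value in $(0,1)$ nor the value $\infty$ (as $\tau_\infty\notin F$). Reading off the fibre over $-d/c$ gives statements (1) and (2), the boundary case occurring precisely when $-d/c\in\{0,1\}$. (As a consistency check, for each real $w$ exactly two of the three curves have $w$ in range, recovering the two critical points of Theorem \ref{thm-number} in the enclosing $\gamma(F_0)$.)

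The hardest part is the last step: pinning down how the three curves sit inside the sub-tiling $F_0=F\cup\gamma_1(F)\cup\gamma_2(F)$. Everything hinges on the two crossing values being exactly $0$ and $1$, and on $\mathcal{C}_1$ meeting $\{\operatorname{Re}\tau=\tfrac12\}$ only at the pole $\tau_\infty$ rather than entering $F$. Establishing these requires the sign/monotonicity analysis of $C$ along the boundary arcs — equivalently, that $\arg C$ is monotone on $|\tau|=1$ and on $|\tau-1|=1$, so that each arc image crosses the real axis exactly once — together with the value $C(\rho)=\overline\rho$ that pushes the corner into the lower half-plane.
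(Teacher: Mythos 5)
Your overall strategy is sound and, once unwound, runs parallel to the paper's: the equivariance $\phi(\gamma\cdot\tau)=\frac{a\phi(\tau)+b}{c\phi(\tau)+d}$ for $\phi$ as in (\ref{Cphi}) is correct (it is the identity (\ref{g3-fC}) in disguise), so the reduction of the count in $\gamma(F)$ to the fibre $\{\tau\in F:\phi(\tau)=\tfrac{-d}{c}\}$ is valid, as are your boundary symmetries ($\operatorname{Re}\phi=0$ on $\operatorname{Re}\tau=0$, $|\phi|=1$ on $|\tau|=1$, $|\phi-1|=1$ on $|\tau-1|=1$, $\phi(i)=i$, $\phi(e^{\pi i/3})=e^{-\pi i/3}$). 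The gap lies in the decisive step, which you flag as ``the hardest part'' but do not prove: that the arc $\{|\tau|=1\}\cap\partial F$ meets $\mathcal{C}_1\cup\mathcal{C}_2\cup\mathcal{C}_3$ only at the single point where $\phi=1$ (and $\{|\tau-1|=1\}\cap\partial F$ only where $\phi=0$). Your symmetry facts give $|\phi|=1$ on that arc, hence $\phi\in\{1,-1\}$ at any curve point on it; but $-1$ lies in the parameter range of $\mathcal{C}_1$, so nothing you have established excludes the point $\tau_{<}(-1)\in\mathcal{C}_1\cap F_0^{<}$ from sitting on that arc, in which case $\mathcal{C}_1$ would enter $F$ and statement (1) would fail for suitable $\gamma$. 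The same issue arises with the value $\phi=2$ on $|\tau-1|=1$. The monotonicity of $\arg\phi$ along the arc that you invoke is exactly what is needed, and it is not a formality: after the M\"obius substitution $\tau\mapsto\frac{1}{1-\tau}$ the arc becomes the segment $\{\frac12+ib:\frac12\le b\le\frac{\sqrt3}{2}\}$ and the claim becomes $\operatorname{Im}\phi(\frac12+ib)\neq0$ there, which is precisely the paper's Lemma \ref{line12}; its proof requires the derivative identity $\varphi'(b)=7g_2(g_2^3-27g_3^2)/(g_2^2-18\eta_1g_3)^2$ together with a sign analysis of $g_2$ and $g_3$ on the line $\operatorname{Re}\tau=\frac12$ and the special value (\ref{eta1-rho}). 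Without this (or an equivalent), the identifications $\mathcal{C}_2\cap F=\{\phi\in[1,+\infty)\}$, $\mathcal{C}_3\cap F=\{\phi\in(-\infty,0]\}$ and $\mathcal{C}_1\cap F=\emptyset$ are unsupported.

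A secondary point: the inference ``$\mathcal{C}_1$ passes through its pole $\tau_\infty$, which lies off $F$, so $\mathcal{C}_1\cap F=\varnothing$'' is a non sequitur as written, since the curve could a priori enter $F$ away from the pole; it is again the single-crossing statement above that rules this out. Once Lemma \ref{line12} is granted, your argument does close up and essentially reproduces the paper's route: the paper transports the line $\operatorname{Re}\tau=\frac12$ to the boundary arcs via the relation $\mathcal{C}_1=\{\frac{1}{1-\tau}:\tau\in\mathcal{C}_2\}$ of (\ref{C1-C2}), obtains Lemma \ref{FFF} and Theorem \ref{Unique-pole-F}, and then reads off Theorem \ref{thm-number-F}.
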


\begin{remark}
The equivalence of Theorems \ref{thm-number} and \ref{thm-number-F} is not obvious at this stage. For example, Theorem \ref{thm-number-F} shows that $E_6(\tau)$ has no critical points in $F$ and has a unique critical point $\tilde{\tau}_j$ in $\gamma_j(F)$, $j=1,2$, while Theorem \ref{thm-number} shows that $E_6(\tau)$ a unique critical point in $F_0$. To coincide these two assertions, we must have  $\tilde{\tau}_1=\tilde{\tau}_2\in\partial\gamma_1(F)\cap \partial\gamma_2(F)$, i.e. $\tilde{\tau}_1=\frac{1}{2}+ib$ with $b\in [\frac{1}{2},\frac{\sqrt{3}}{2}]$. Indeed, this $\tilde{\tau}_1$ is precisely the $\tau_\infty$ given in Remark \ref{rmk-ss} and also Theorem \ref{uniquezero} below.
\end{remark}

Similarly, we can transform every critical point of $E_{6}(\tau)$ via the M\"{o}bius transformation of $SL(2,\mathbb{Z})$ action to locate it in $F$. Denote \emph{the collection of such corresponding points in $F$ by $\mathcal{D}$}. Recalling the smooth curves $\mathcal{C}_k$'s in Theorem \ref{Location}, we
define two disjoint sets
\begin{equation}\label{C-F}
\mathcal{C}_{<}:=\mathcal{C}_2\cap F,\quad \mathcal{C}_{>}:=\mathcal{C}_3\cap F.
\end{equation}
In general, $\mathcal{C}_{<}$ or $\mathcal{C}_{>}$ might consist of several disjoint curves. Our following result shows that this possibility does not happen.
\begin{theorem}\label{Location-F} Under the above notations, the following statements hold.
\begin{itemize}
\item[(1)] Each of $\mathcal{C}_{<}\setminus \partial \mathcal{C}_{<}$ and $\mathcal{C}_{>}\setminus \partial \mathcal{C}_{>}$ is a smooth curve in $F$ with
\[\partial \mathcal{C}_{<}=\{\tilde{\tau}_<,\tfrac{1}{4}+i\infty\}, \;\;\partial \mathcal{C}_{>}=\{\tilde{\tau}_>,\tfrac{3}{4}+i\infty\}\]
for some $\tilde{\tau}_<$, $\tilde{\tau}_>\in \partial F$.
\item[(2)] $\mathcal{D}$ is a dense subset of the union of the two disjoint curves $\mathcal{C}_{<}$ and $\mathcal{C}_{>}$, i.e.
\begin{equation}\label{cf0-F}\mathcal{D}\subset \mathcal{C}_{<}\cup\mathcal{C}_{>}=\overline{\mathcal{D}}\cap F= \overline{\mathcal{D}}\setminus\{\infty\}.\end{equation}
\end{itemize}
\end{theorem}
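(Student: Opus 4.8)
The plan is to encode both parts through the single auxiliary map
\[ W(\tau):=\tau+\frac{36\pi i\,g_3(\tau)}{(g_2^2-18\eta_1 g_3)(\tau)}=\tau+\frac{6g_3(\tau)}{g_3'(\tau)}, \]
where the second expression uses \eqref{dd-eta11}; thus a critical point of $E_6$ is exactly a point with $W(\tau)=\infty$, and by \eqref{c=tau} the three curves of Theorem \ref{Location} are precisely $\mathcal{C}_1\cup\mathcal{C}_2\cup\mathcal{C}_3=\{\tau\in F_0:\,W(\tau)\in\mathbb{R}\cup\{\infty\}\}$, with $C=W(\tau)$ the parameter. The first step is the equivariance
\[ W(\gamma\cdot\tau)=\frac{a\,W(\tau)+b}{c\,W(\tau)+d},\qquad \gamma=\bigl(\begin{smallmatrix}a&b\\c&d\end{smallmatrix}\bigr)\in SL(2,\mathbb{Z}), \]
which is a short computation from $g_3(\gamma\tau)=(c\tau+d)^6g_3(\tau)$ after differentiating once and using $ad-bc=1$; I expect this to be exactly the monodromy statement of Section \ref{mono-int}. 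Two consequences follow: the real locus $\mathcal{R}:=W^{-1}(\mathbb{R}\cup\{\infty\})$ is $SL(2,\mathbb{Z})$-invariant (since $SL(2,\mathbb{Z})$ preserves $\mathbb{R}\cup\{\infty\}$), and for each critical point $\tau^{\ast}$ and each $\sigma\in SL(2,\mathbb{Z})$ with $\sigma\tau^{\ast}\in F$ one has $W(\sigma\tau^{\ast})=\sigma\cdot\infty\in\mathbb{Q}\cup\{\infty\}$, while conversely any $q\in F$ with $W(q)=\sigma\cdot\infty$ is $\sigma\tau^{\ast}$ for the critical point $\tau^{\ast}=\sigma^{-1}q$. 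Hence
\[ \mathcal{D}=\{q\in F:\,W(q)\in\mathbb{Q}\cup\{\infty\}\},\qquad \overline{\mathcal{D}}\cap F=\mathcal{R}\cap F, \]
the latter because $\mathbb{Q}$ is dense in $\mathbb{R}$ and $W$ restricts to a homeomorphism of each curve onto its $C$-interval (Section \ref{smoothcurves}).

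For part (2), since $F\subset F_0$, Theorem \ref{Location} gives $\mathcal{R}\cap F=(\mathcal{C}_1\cup\mathcal{C}_2\cup\mathcal{C}_3)\cap F$, so it remains to show $\mathcal{C}_1\cap F=\varnothing$. First I would read off from the $q$-expansions ($E_6'\sim-1008\pi i\,q$, $E_6\sim1$) that near the cusp $\infty$
\[ W(\tau)=\tau+\frac{i}{168\pi}\,e^{-2\pi i\tau}+\cdots, \]
so $\operatorname{Im}W(\tau)=0$ forces $\operatorname{Re}\tau\to\tfrac14$ or $\tfrac34$; thus only the two $\infty$–ends of $\mathcal{C}_2$ and $\mathcal{C}_3$ enter $F$, whereas $\mathcal{C}_1$, whose closure meets only the cusps $0,1$, stays in the lower region $\gamma_1(F)\cup\gamma_2(F)$ (its sole $C=\infty$ point being $\tau_\infty=\tfrac12+i\,0.634\ldots\notin F$). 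Granting $\mathcal{C}_1\cap F=\varnothing$, we get $\mathcal{R}\cap F=(\mathcal{C}_2\cap F)\cup(\mathcal{C}_3\cap F)=\mathcal{C}_<\cup\mathcal{C}_>$, and combined with the first paragraph this yields $\mathcal{D}\subset\mathcal{C}_<\cup\mathcal{C}_>=\overline{\mathcal{D}}\cap F$; deleting the single limit cusp proves $\overline{\mathcal{D}}\setminus\{\infty\}=\mathcal{C}_<\cup\mathcal{C}_>$, i.e. \eqref{cf0-F}. Disjointness of $\mathcal{C}_<,\mathcal{C}_>$ is inherited from that of $\mathcal{C}_2,\mathcal{C}_3$.

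For part (1) I must upgrade these intersections to single smooth arcs, which is the heart of the matter since a priori $\mathcal{C}_2$ could leave and re-enter $F$. The reflection $\rho:\tau\mapsto1-\bar\tau$ preserves $F$ and satisfies $W(\rho\tau)=1-\overline{W(\tau)}$, swapping $\mathcal{C}_2\leftrightarrow\mathcal{C}_3$, so it suffices to treat $\mathcal{C}_<$. The next step is to pin down $\mathcal{R}\cap\partial F$: from $W(-\bar\tau)=-\overline{W(\tau)}$ (real $q$-coefficients) together with the equivariance under $S:\tau\mapsto-1/\tau$ and $T:\tau\mapsto\tau+1$, one finds $\operatorname{Re}W=0$ on $\{\operatorname{Re}\tau=0\}$, $\operatorname{Re}W=1$ on $\{\operatorname{Re}\tau=1\}$, $|W|=1$ on $\{|\tau|=1\}$ and $|W-1|=1$ on $\{|\tau-1|=1\}$; hence $\mathcal{R}$ meets $\partial F$ only at the isolated points where $W\in\{0,\pm1,2,\infty\}$. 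Since $\mathcal{C}_<$ carries $W\in(0,+\infty)$, its only candidate exit points are $W=1$ (on $|\tau|=1$) and $W=2$ (on $|\tau-1|=1$).

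The hard part, and the main obstacle, will be to prove that exactly one such crossing occurs, so that $\mathcal{C}_<$ is a single arc from $\tfrac14+i\infty$ down to a unique $\tilde{\tau}_<\in\partial F$ (then $\tilde{\tau}_>=1-\overline{\tilde{\tau}_<}$ by $\rho$). This uniqueness is not forced by the reality structure above, and I would establish it either by a monotonicity statement for $\tau$ as $C=W$ decreases along $\mathcal{C}_2$ (showing the curve descends out of $F$ without returning), or, more structurally, from the $SL(2,\mathbb{Z})$-invariant cell decomposition of $\mathbb{H}$ coming from the developing map of Section \ref{mono-int}: equivariance makes every cusp an endpoint of exactly two arcs of $\mathcal{R}$, which rules out extra components of $\mathcal{C}_2\cap F$ and simultaneously re-proves $\mathcal{C}_1\cap F=\varnothing$. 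This also matches the count in Theorem \ref{thm-number-F}, fixing $\tilde{\tau}_<,\tilde{\tau}_>\in\partial F$ and completing part (1).
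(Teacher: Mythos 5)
Your framework is sound and partly overlaps with the paper's: the equivariance $\phi(\gamma\cdot\tau)=\frac{a\phi(\tau)+b}{c\phi(\tau)+d}$ is correct (it is implicit in \eqref{g3-fC} and in the monodromy section), the identification $\mathcal{D}=\{q\in F:\phi(q)\in\mathbb{Q}\cup\{\infty\}\}$ is valid, and the boundary relations $\operatorname{Re}\phi=0$ on $\operatorname{Re}\tau=0$, $|\phi|=1$ on $|\tau|=1$, etc., are correct. But the proof as written has a genuine gap, and you flag it yourself: both the uniqueness of the crossing of $\mathcal{C}_2$ with $\partial F$ (needed so that $\mathcal{C}_<$ is a single arc) and the assertion $\mathcal{C}_1\cap F=\emptyset$ are left to unexecuted strategies (``a monotonicity statement'' or ``the $SL(2,\mathbb{Z})$-invariant cell decomposition''). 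The earlier informal justification that $\mathcal{C}_1$ ``stays in the lower region'' because its closure meets $\partial\mathbb{H}$ only at $0,1$ is not an argument: a priori $\mathcal{C}_1$ could rise into $F$ and come back down, and your reality structure does not exclude a crossing of $\mathcal{C}_1$ with $\{|\tau|=1\}$ at a point where $\phi=-1$ (note $-1$ lies in the parameter range $(-\infty,0)$ of the branch $\tau_<(C)$ of $\mathcal{C}_1$), nor with $\{|\tau-1|=1\}$ at a point where $\phi=2$.

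Both gaps are closable with tools already in the paper, and this is how the paper proceeds (Lemma \ref{FFF}). For $\mathcal{C}_<$: since $\mathcal{C}_2\subset F_0^{<}$, any point of $\mathcal{C}_2\cap\partial F$ lies on $|\tau|=1$, where your computation forces $\phi=\pm1$; as $\phi=C\in(0,+\infty)$ on $\mathcal{C}_2$, such a point is a zero of $f_1$ in $F_0$, and Theorem \ref{Unique-pole}-(3) says there is exactly one, namely $\tau_1$ (which does lie on $|\tau|=1$). So your claim that ``uniqueness is not forced by the reality structure'' is too pessimistic — it follows once you combine that structure with Theorem \ref{Unique-pole}. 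For $\mathcal{C}_1\cap F=\emptyset$ the reality structure alone is not enough; the paper instead uses the M\"obius relation $\mathcal{C}_1=\{\tfrac{1}{1-\tau}:\tau\in\mathcal{C}_2\}$ (equation \eqref{C1-C2}): a crossing of $\mathcal{C}_1$ with $|\tau-1|=1$ would map under $\tau\mapsto\tfrac{\tau-1}{\tau}$ to a point of $\mathcal{C}_2$ with $\operatorname{Re}=\tfrac12$, contradicting $\mathcal{C}_2\subset F_0^{<}$ (and the symmetry of $\mathcal{C}_1$ about $\operatorname{Re}\tau=\tfrac12$ reduces the $|\tau|=1$ case to this one). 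Without supplying one of these arguments your proof of both parts is incomplete.
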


Again the equivalence of Theorems \ref{Location} and \ref{Location-F} is not obvious at this stage.
The above results completely describe the distribution of all critical points of $E_{6}(\tau)$ or equivalently $g_3(\tau)$.

The rest of this paper is organized as follows. In Section \ref{uniqueness-F0}, we apply the method of continuity to prove Theorem \ref{thm-number}-(1). As a consequence, Theorem \ref{thm-number}-(2) will be proved in Section \ref{twocriticalp}. In Section \ref{smoothcurves}, we introduce the precise definition of the three curves $\mathcal{C}_1,\mathcal{C}_2, \mathcal{C}_3$ and prove Theorem \ref{Location}. We also restate Theorem \ref{thm-number} more accurately in Theorem \ref{thm-number-1}.
In Section \ref{distribution-F}, we prove Theorems \ref{thm-number-F} and \ref{Location-F} as consequences of Theorems \ref{thm-number} and \ref{Location}.
In Section \ref{mono-int}, we introduce a monodromy interpretation from a complex linear ODE for the curves.

After this paper was submitted, recently we learned from Professor Bonk that Gun-Oesterl\'{e} \cite{GO22} studied the critical points of Eisenstein series $E_{2k}$ for $k\geq 2$ in a unified way, and Bonk \cite{Bonk} studied the critical points of Eisenstein series $E_{2k}$ for $k\in\{1,2,3\}$ from the viewpoint of conformal maps. The methods in \cite{Bonk, GO22} are different from ours.

\section{Uniqueness of the critical point in $F_0$}
\label{uniqueness-F0}

The purpose of this section is to prove Theorem \ref{thm-number}-(1). Remark that $c=0$ implies $\gamma(F_0)=F_0+m$ for some $m\in\mathbb{Z}$.

\begin{theorem}[=Theorem \ref{thm-number}-(1)]\label{uniquezero}
The derivative $g_3'(\tau)$ has a unique zero in $F_0$, which is denoted by $\tau_{\infty}$. Furthermore, $\tau_{\infty}=\frac{1}{2}+i b_{\infty}$ with $b_{\infty}\in (\frac{1}{2}, \frac{\sqrt{3}}{2})$.

Consequently, $\tau_{\infty}+m$ is the unique zero of $g_3'(\tau)$ in $F_0+m$ for any $m\in\mathbb{Z}$, i.e. Theorem \ref{thm-number}-(1) holds.
\end{theorem}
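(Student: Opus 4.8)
The plan is to translate the statement into a sign/winding problem for the quasimodular form appearing in Ramanujan's identity, use the reflection symmetry across the line $\operatorname{Re}\tau=\tfrac12$ to locate a zero there, and then count \emph{all} zeros in $F_0$ by a contour argument. Throughout set
\[
h(\tau):=g_2(\tau)^2-18\eta_1(\tau)g_3(\tau)=\tfrac{16\pi^8}{9}\bigl(E_4(\tau)^2-E_2(\tau)E_6(\tau)\bigr),
\]
so that by \eqref{dd-eta11} one has $g_3'(\tau)=\tfrac{-i}{6\pi}h(\tau)$ and the zeros of $g_3'$ in $F_0$ are exactly the zeros of $h$. Since $E_2,E_4,E_6$ have real $q$-coefficients and period $1$, the involution $\tau\mapsto 1-\bar\tau$ gives $h(1-\bar\tau)=\overline{h(\tau)}$; hence $h$ is \emph{real-valued} on the central segment $L:=\{\tfrac12+it:\ t\ge \tfrac12\}$ (equivalently $g_3'$ is purely imaginary there), and every zero of $h$ in $F_0$ off $L$ is accompanied by its mirror image, so off-$L$ zeros occur in pairs.

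First I would establish existence together with the location $b_\infty\in(\tfrac12,\tfrac{\sqrt3}{2})$ by evaluating $h$ at two points of $L$. At the bottom point $\tau_0=\tfrac12+\tfrac{i}{2}$ one checks $\tau_0\sim i$ under $SL(2,\mathbb{Z})$ (indeed $-1/\tau_0=-1+i\equiv i$), whence $E_6(\tau_0)=0$ and $h(\tau_0)=\tfrac{16\pi^8}{9}E_4(\tau_0)^2>0$, the value being a positive real since $E_4$ is real on $L$ and $\tau_0\not\sim\rho$. At $\rho'=\tfrac12+\tfrac{\sqrt3}{2}i=\rho+1$ one has $E_4(\rho')=0$, so $h(\rho')=-\tfrac{16\pi^8}{9}E_2(\rho')E_6(\rho')$, whose sign I would pin down to be negative from the (real) special values at the order-$3$ elliptic point. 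Together with the expansion $E_4^2-E_2E_6=1008\,q+O(q^2)$ and $q=-e^{-2\pi t}<0$ on $L$ (giving $h<0$ as $t\to\infty$), the intermediate value theorem produces a zero $\tau_\infty=\tfrac12+ib_\infty$ with $b_\infty\in(\tfrac12,\tfrac{\sqrt3}{2})$.

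For uniqueness in all of $F_0$ I would count zeros by the argument principle on the truncated domain $F_0^T=F_0\cap\{\operatorname{Im}\tau\le T\}$, writing the number of zeros as $\tfrac1{2\pi}\Delta_{\partial F_0^T}\arg h$. The two vertical edges $\operatorname{Re}\tau=0,1$ cancel because $h$ has period $1$; on the top edge the factor $e^{2\pi i\tau}$ makes the contribution explicit; and the reflection symmetry $h(1-\bar\tau)=\overline{h(\tau)}$ reduces the winding along the bottom arc $|\tau-\tfrac12|=\tfrac12$ to its right half together with the arguments at the symmetric endpoints. Once the total count is shown to equal exactly $1$, the mirror-pairing of off-$L$ zeros forces the single zero to lie on $L$, so it must be $\tau_\infty$. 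The final ``consequently'' is then immediate from $g_3'(\tau+m)=g_3'(\tau)$ and $\gamma(F_0)=F_0+m$ when $c=0$.

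The main obstacle is this uniqueness count, and specifically the winding of $h$ along the bottom arc near the two cusps $0$ and $1$. Because $h$ is only quasimodular, it does not extend holomorphically across these cusps: the transformation $E_2(-1/\tau)=\tau^2E_2(\tau)-\tfrac{6i}{\pi}\tau$ forces $h(\tau)\sim C\tau^{-7}$ as $\tau\to 0$ (and analogously at $1$), so the naive valence formula is unavailable and the arc contribution is not topologically obvious. This is exactly where the symmetry across $L$, a careful analysis of $\arg h$ through the $\tau^{-7}$ growth near the excised cusps, and a continuity/deformation argument in the truncation parameter $T$ must be combined to show that the arc contributes precisely the amount making the total equal to $1$; I expect this to be the most delicate part of the proof.
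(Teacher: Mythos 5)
Your existence argument (sign change of $h=g_2^2-18\eta_1g_3$ along $\operatorname{Re}\tau=\tfrac12$ between $b=\tfrac12$ and $b=\tfrac{\sqrt3}{2}$) is correct and is exactly what the paper does. The gap is in the uniqueness step, and it sits precisely where you yourself flag it: your plan requires evaluating $\tfrac{1}{2\pi}\Delta_{\partial F_0^T}\arg h$ for the function $h=h_1$ itself, and that computation is not carried out. Three separate difficulties pile up there. First, $h_1$ \emph{vanishes} at the cusp $\infty$ (its $q$-expansion is $1792\pi^8 q+O(q^2)$), so the top edge of the truncated contour contributes a full unit of winding that must be tracked and does not disappear as $T\to\infty$; your proposed ``deformation in the truncation parameter $T$'' cannot repair anything, because the problematic contributions live on fixed small horocycles around the cusps $0$ and $1$, independent of $T$. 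Second, at those two cusps $h_1\sim c\,\tau^{-7}$ and $c'(\tau-1)^{-7}$, and the two boundary arcs of $F_0$ meet \emph{tangentially} there, so the excised arcs are horocyclic and the change of argument of $\tau^{-7}$ along them is not given by a local angle count; this is genuinely the hard part and is left unresolved. Third, the mirror symmetry $h(1-\bar\tau)=\overline{h(\tau)}$ only gives a parity statement (off-line zeros come in pairs), which would be consistent with a count of $3$ = one on-line zero plus a mirror pair; you therefore need the exact value $1$, not just its parity, and nothing in the proposal produces it.

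The paper circumvents this entirely by deforming the \emph{coefficient} rather than the contour: it sets $h_t=g_2^2-18t\eta_1g_3$ for $t\in[0,1]$. For every $t\in[0,1)$ the function $h_t$ is bounded away from $0$ at all three cusps (it tends to $\tfrac{16}{9}(1-t)\pi^8>0$ at $\infty$ and to $\infty$ at $0,1$; Lemma \ref{infinity-behavior-t}) and is nonzero on $\partial F_0\cap\mathbb H$ (Lemma \ref{lemma-t10}), so the zero count in $F_0$ is constant in $t$ and can be read off at $t=0$, where $h_0=g_2^2$ is an honest modular form with a single double zero at $e^{\pi i/3}$; hence the count is $2$ for all $t<1$, and an IVT argument on the central line locates both zeros there (Lemma \ref{lemma-ht}). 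The case $t=1$ is then handled by two extra steps: a monotonicity computation (via \eqref{2derivative-1}) showing $h_1$ has at most one zero on the central line, and a perturbation argument showing that an off-line zero of $h_1$ would force a nearby off-line zero of $h_t$ for $t$ close to $1$, contradicting Lemma \ref{lemma-ht}. If you want to salvage your direct route you would have to carry out the horocyclic winding computation at the cusps $0$ and $1$ and the top-edge contribution explicitly; as written, the proposal asserts the answer ``$1$'' without a mechanism for obtaining it.
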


\begin{remark}
Clearly the last assertion of Theorem \ref{uniquezero} follows from $g_3(\tau)=g_3(\tau+1)$.
 Remark \ref{rmk-ss} indicates that $b_\infty=0.6341269863$ by applying Saber and Sebbar's result \cite[p 1787]{SS}.
\end{remark}
First we recall the modularity of $g_2(\tau), g_3(\tau)$, $\wp(z;\tau)$ and $\zeta(z;\tau)$. Given any $\bigl(\begin{smallmatrix}a & b\\
c & d\end{smallmatrix}\bigr)
\in SL(2,\mathbb{Z})$, it is well known that (cf. \cite{A,Akhiezer})
\begin{equation}g_{2}(\tfrac{a\tau+b}{c\tau+d})=(c\tau+d)^4g_{2}(\tau),\quad
g_{3}(\tfrac{a\tau+b}{c\tau+d})=(c\tau+d)^6g_{3}(\tau),\label{II-30-00}\end{equation}
\[
\wp \Big(\tfrac{z}{c\tau+d}; \tfrac{a\tau+b}{c\tau+d}\Big)  =(
c\tau+d)^{2}\wp (z;\tau),\]
\[\zeta \Big(\tfrac{z}{c\tau+d}; \tfrac{a\tau+b}{c\tau+d}\Big)
=(c\tau+d)  \zeta( z;\tau).
\]
Also recall that $\eta_{1}(\tau):=2\zeta(\frac{1}{2};\tau)=\frac{\pi^2}{3}E_2(\tau)$ and $\eta_{2}(\tau):=2\zeta(\frac{\tau}{2};\tau)$ are the two quasi-periods of $\zeta(z;\tau)$:
\begin{equation}
\eta_{1}(\tau)=\zeta(z+1;\tau)-\zeta(z;\tau),\text{ \ }\eta_{2}(\tau
)=\zeta(z+\tau;\tau)-\zeta(z;\tau). \label{quasi}%
\end{equation}
The well-known Legendre relation gives $\eta_2(\tau)=\tau\eta_{1}(\tau)-2\pi i$.
It is easy to see that
\begin{equation}%
\begin{pmatrix}
\eta_{2}(\tfrac{a\tau+b}{c\tau+d})\\
\eta_{1}(\tfrac{a\tau+b}{c\tau+d})
\end{pmatrix}
=(c\tau+d)\begin{pmatrix}
a & b\\
c & d
\end{pmatrix}
\begin{pmatrix}
\eta_{2}(\tau)\\
\eta_{1}(\tau)
\end{pmatrix}
. \label{II-31-1-00}%
\end{equation}
Thus each $\eta_j(\tau)$ is not a modular form, but $(\eta_2(\tau),\eta_1(\tau))$ is the so-called \emph{vector-valued modular form}.
In the rest of this paper, we will freely use the formulas (\ref{II-30-00})-(\ref{II-31-1-00}).

Recall (\ref{dd-eta11}) that
\begin{equation}\label{d-eta1}g_3'(\tau)=\tfrac{-i}{6\pi}(g_2^2-18\eta_1 g_3)(\tau).\end{equation}
Thus, it suffices for us to study the function $(g_2^2-18\eta_1 g_3)(\tau)$.

\begin{lemma}\label{lemma-simplezero}
Any zero of $(g_2^2-18\eta_1 g_3)(\tau)$ is simple.
\end{lemma}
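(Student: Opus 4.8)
The plan is to verify that every zero $\tau_0$ of $F:=g_2^2-18\eta_1 g_3$ satisfies $F'(\tau_0)\neq 0$. By \eqref{d-eta1} we have $F=6\pi i\,g_3'$, so this is equivalent to $g_3''(\tau_0)\neq 0$, i.e. to every critical point of $g_3$ being a simple zero of $g_3'$. First I would record the rest of the Ramanujan system for the triple $(g_2,g_3,\eta_1)$: translating Ramanujan's identities for $E_4'$ and $E_2'$ through $g_2=\tfrac{4\pi^4}{3}E_4$ and $\eta_1=\tfrac{\pi^2}{3}E_2$ gives
\[ g_2'=\tfrac{i}{\pi}\bigl(2\eta_1 g_2-3g_3\bigr),\qquad \eta_1'=\tfrac{i}{24\pi}\bigl(12\eta_1^2-g_2\bigr), \]
to be used together with the given $g_3'=-\tfrac{i}{6\pi}(g_2^2-18\eta_1 g_3)$.

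Next I would differentiate $F$ and evaluate at $\tau_0$. Since $F(\tau_0)=0$ forces $g_3'(\tau_0)=0$, the term $-18\eta_1 g_3'$ in $F'=2g_2 g_2'-18\eta_1' g_3-18\eta_1 g_3'$ drops, and the relation $g_2^2=18\eta_1 g_3$ (again from $F(\tau_0)=0$) may be substituted into what remains. After this routine simplification one is left with
\[ F'(\tau_0)=\tfrac{21 i}{4\pi}\,g_3(\tau_0)\,\bigl(12\eta_1^2-g_2\bigr)(\tau_0), \]
so the lemma reduces to showing that neither factor vanishes at $\tau_0$.

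The factor $g_3(\tau_0)$ is handled quickly: if $g_3(\tau_0)=0$ then $F(\tau_0)=0$ gives $g_2(\tau_0)=0$ as well, so the discriminant would satisfy $\Delta=g_2^3-27g_3^2=0$ at $\tau_0$, contradicting $\Delta\neq 0$ on $\mathbb{H}$. For the factor $12\eta_1^2-g_2$, suppose $12\eta_1^2=g_2$ at $\tau_0$; combining this with $g_2^2=18\eta_1 g_3$ leads either to $g_2=12\eta_1^2,\ g_3=8\eta_1^3$, which forces $\Delta=(12\eta_1^2)^3-27(8\eta_1^3)^2=0$ (impossible), or to the borderline case $\eta_1(\tau_0)=g_2(\tau_0)=0$.

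I expect this borderline case to be the main obstacle, since it genuinely would produce a double zero and so must be excluded by hand. The plan is to note that $g_2(\tau_0)=0$ forces $\tau_0$ to be $SL(2,\mathbb{Z})$-equivalent to $\rho:=e^{i\pi/3}$, the unique zero of $E_4$, and then to show $\eta_1$ never vanishes on the orbit of $\rho$. From the weight-$2$ transformation of $E_2^*(\tau):=E_2(\tau)-\tfrac{3}{\pi\operatorname{Im}\tau}$ at the order-three fixed point $\rho$ one gets $E_2^*(\rho)=0$, hence $\eta_1(\rho)=\tfrac{\pi^2}{3}E_2(\rho)=\tfrac{\pi^2}{3}\cdot\tfrac{2\sqrt3}{\pi}=\tfrac{2\pi}{\sqrt3}\neq 0$; feeding this value and the Legendre relation $\eta_2=\rho\eta_1-2\pi i$ into the vector-valued transformation \eqref{II-31-1-00} gives $\eta_1(\gamma\rho)=(c\rho+d)\bigl(c\eta_2(\rho)+d\eta_1(\rho)\bigr)$, and a short check shows the second factor vanishes only if $c=0$ and $c+2d=0$ simultaneously, which is impossible for $\gamma\in SL(2,\mathbb{Z})$. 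This excludes the borderline case and completes the argument.
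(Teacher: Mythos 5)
Your proof is correct and follows essentially the same route as the paper: differentiate $g_2^2-18\eta_1 g_3$ via the Ramanujan system, use the vanishing at $\tau_0$ to reduce the derivative there to $\tfrac{21i}{4\pi}g_3(12\eta_1^2-g_2)$, and rule out the vanishing of either factor using the nonvanishing of $\Delta=g_2^3-27g_3^2$. The one place you go beyond the paper's sketch is the borderline case $\eta_1(\tau_0)=g_2(\tau_0)=0$: the paper passes from $12\eta_1^2=g_2$ and $g_2^2=18\eta_1 g_3$ directly to $\Delta=0$, which implicitly assumes $\eta_1\neq 0$, whereas you flag this subcase and exclude it by showing $\eta_1$ never vanishes on the $SL(2,\mathbb{Z})$-orbit of $\rho=e^{i\pi/3}$ (your computation $c\eta_2(\rho)+d\eta_1(\rho)=\tfrac{\pi}{\sqrt{3}}(c+2d)-i\pi c$ checks out). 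So your version is a slightly more complete rendering of the same argument.
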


\begin{proof}
This lemma is equivalent to assert that any zero of $g_3'(\tau)$ is simple, which was already pointed out in \cite[p.1787]{SS}. Here we sketch the proof for later usage.
Applying the classical formulas (see e.g. \cite[p.704]{YB} or \cite[p.1786]{SS})
\[\eta_1'(\tau)=\tfrac{i}{24\pi}(12\eta_1^2-g_2)(\tau),\]
\[g_2'(\tau)=\tfrac{i}{\pi}(2\eta_1g_2-3g_3)(\tau),\]
a direct computation gives
\begin{equation}\label{2derivative}(g_2^2-18\eta_1 g_3)'(\tau)=\frac{7i}{4\pi}(4\eta_1 g_2^2-36\eta_1^2g_3-3g_2g_3)(\tau).\end{equation}
Suppose $g_2^2-18\eta_1 g_3=0$. Inserting this into (\ref{2derivative}) leads to
\begin{equation}\label{2derivative-1}(g_2^2-18\eta_1 g_3)'=\frac{21i}{4\pi}g_3(12\eta_1^2-g_2).\end{equation}
If $(g_2^2-18\eta_1 g_3)'=0$, then $12\eta_1^2-g_2=0$. This, together with $g_2^2-18\eta_1 g_3=0$, yields $g_2^3-27g_3^2=0$, a contradiction. Thus $(g_2^2-18\eta_1 g_3)'\neq 0$ as long as  $g_2^2-18\eta_1 g_3=0$.
\end{proof}

We need to use the method of continuity to prove Theorem \ref{uniquezero}.
For $t\in [0,1]$, we define holomorphic functions
\begin{equation}\label{t-deform-1}
h_{t}(\tau):=g_2(\tau)^2-18t\eta_1(\tau) g_3(\tau).
\end{equation}

\begin{lemma}\label{lemma-etai}
If $\tau\in i\mathbb{R}_{>0}$, then $g_{3}'(\tau)\neq 0$ and so
\begin{equation}\label{g3eta1-0}
h_t(\tau)=g_2(\tau)^2-18t\eta_1(\tau) g_3(\tau)>0,\quad \forall t\in [0,1].
\end{equation}
\end{lemma}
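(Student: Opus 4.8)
The plan is to collapse the whole statement onto a single monotonicity fact about $g_3$ restricted to the imaginary axis, which then falls out immediately from the sign pattern of the Fourier coefficients of $E_6$. The point is to avoid analysing $g_2$, $g_3$, $\eta_1$ separately: along $i\mathbb{R}_{>0}$ each of $\eta_1(iy)$ and $g_3(iy)$ changes sign (the latter vanishing at $\tau=i$), so a term-by-term attack on $g_2^2-18\eta_1 g_3$ is hopeless, whereas the combination itself is essentially a derivative that has a transparent sign.

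First I would record reality and a chain-rule identity. For $\tau=iy$ with $y>0$ we have $q=e^{2\pi i\tau}=e^{-2\pi y}\in(0,1)$, so the real Fourier expansions of $E_2,E_4,E_6$ force $g_2(iy),g_3(iy),\eta_1(iy)\in\mathbb{R}$; in particular $h_t(iy)$ is real for every $t$, and by \eqref{d-eta1} the value $g_3'(iy)=\frac{-i}{6\pi}(g_2^2-18\eta_1 g_3)(iy)$ is purely imaginary. Differentiating $y\mapsto g_3(iy)$ gives $\frac{d}{dy}g_3(iy)=i\,g_3'(iy)$, whence the clean identity
\[
(g_2^2-18\eta_1 g_3)(iy)=6\pi i\,g_3'(iy)=6\pi\,\frac{d}{dy}g_3(iy).
\]
Thus the core assertion $(g_2^2-18\eta_1 g_3)(iy)>0$ (equivalently $g_3'(iy)\neq 0$, with the correct sign) is exactly the statement that $y\mapsto g_3(iy)$ is strictly increasing.

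The key step is this monotonicity, which I would obtain directly from $E_6(\tau)=1-504\sum_{n\geq 1}\sigma_5(n)q^n$. Since $g_3=\frac{8\pi^6}{27}E_6$ and $q=e^{-2\pi y}$,
\[
\frac{d}{dy}g_3(iy)=\frac{8\pi^6}{27}\cdot 1008\pi\sum_{n\geq 1} n\,\sigma_5(n)\,e^{-2\pi n y}>0\qquad(y>0),
\]
because every coefficient is positive. By the identity above this simultaneously yields $g_3'(iy)\neq 0$ and $h_1(iy)=(g_2^2-18\eta_1 g_3)(iy)>0$. To pass from $t=1$ to all $t\in[0,1]$ I would use only convexity, not a further estimate: from \eqref{t-deform-1}, $h_t=(1-t)g_2^2+t(g_2^2-18\eta_1 g_3)=(1-t)h_0+t\,h_1$ is a convex combination of $h_0$ and $h_1$. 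Here $h_0(iy)=g_2(iy)^2>0$ because $g_2(iy)=\frac{4\pi^4}{3}E_4(iy)$ with $E_4(iy)=1+240\sum_{n\geq 1}\sigma_3(n)e^{-2\pi ny}>0$, while $h_1(iy)>0$ by the previous display; hence $h_t(iy)>0$ for every $t\in[0,1]$, which is \eqref{g3eta1-0}.

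The ``hard part'' here is really only conceptual rather than computational: recognising that $g_2^2-18\eta_1 g_3$ is, up to the positive factor $6\pi$, the $y$-derivative of the single real function $g_3(iy)$, so that the delicate individual sign behaviour of $\eta_1$ and $g_3$ never needs to be confronted. Once that reduction is made, the only thing requiring care is the bookkeeping of the factors of $i$ in the chain rule so that the final sign comes out $+$; the positivity itself is forced by the one-signed non-constant Fourier coefficients of $E_6$ (and, for $h_0$, of $E_4$).
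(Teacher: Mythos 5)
Your proof is correct and follows essentially the same route as the paper's: both establish $\frac{d}{dy}g_3(iy)>0$ from the one-signed $q$-expansion of $E_6$, convert this via Ramanujan's identity \eqref{d-eta1} into $(g_2^2-18\eta_1 g_3)(iy)>0$, and then pass to all $t\in[0,1]$ using the reality of $\eta_1,g_3$ and the positivity of $g_2$ on the imaginary axis. Your explicit convex-combination step $h_t=(1-t)h_0+t\,h_1$ merely spells out what the paper leaves implicit.
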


\begin{proof}
Denote $q=e^{2\pi i\tau}$ and recall the $q$-expansion of $g_{3}(\tau)$ (cf. \cite[p.44]{Lang}):
\begin{align}
g_{3}(\tau)  =\frac{8\pi^{6}}{27}\bigg(1-504\sum_{k=1}^{\infty}\sigma
_{5}(k)q^{k}\bigg),\;\,\text{where }\sigma_{5}(k)=\sum_{1\leq d|k}d^{5}.\label{ex-2}
\end{align}
Let $\tau=ib$ with $b>0$. Then $q=e^{-2\pi b}$ and hence $\frac{d}{db}g_{3}(ib)>0$ for $b>0$. So $g_{3}'(\tau)\neq 0$ and we see from (\ref{d-eta1}) that
\begin{equation}\label{g3eta1}
g_2(\tau)^2-18\eta_1(\tau) g_3(\tau)>0.
\end{equation} Recalling the well known fact (cf. \cite{Lang}) that
\begin{equation}\label{ib-real}g_2(\tau)>0, \quad \eta_1(\tau), g_3(\tau)\in\mathbb{R}\quad \text{for }\tau\in i\mathbb{R}_{>0},\end{equation}
we obtain (\ref{g3eta1-0}).
\end{proof}

\begin{lemma}
\label{lemma-t10}For any $t\in [0,1]$, $h_{t}(\tau
)\not =0$ for $\tau \in \partial F_{0}\cap \mathbb{H}$.
\end{lemma}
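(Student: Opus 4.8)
The plan is to split $\partial F_0\cap\mathbb{H}$ into its three pieces and handle them separately: the two vertical sides $\{\operatorname{Re}\tau=0\}$ and $\{\operatorname{Re}\tau=1\}$, and the semicircular arc $A:=\{|\tau-\tfrac12|=\tfrac12\}\cap\mathbb{H}$ joining the cusps $0$ and $1$. The left side is exactly $i\mathbb{R}_{>0}$, so Lemma \ref{lemma-etai} already gives $h_t(\tau)>0$ there. For the right side, since $g_2$, $g_3$ and $\eta_1=\tfrac{\pi^2}{3}E_2$ are all $1$-periodic, we have $h_t(1+ib)=h_t(ib)>0$, again by Lemma \ref{lemma-etai}. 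Thus only the arc $A$ requires genuine work.

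For the arc, the idea is to push it onto the imaginary axis by a modular transformation and then exploit Lemma \ref{lemma-etai} together with the reality statement \eqref{ib-real}. Using $\gamma_2=\bigl(\begin{smallmatrix}1&-1\\1&0\end{smallmatrix}\bigr)$ from \eqref{F-de}, one checks that $\tau\in A$ satisfies $|\tau|^2=\operatorname{Re}\tau$, whence $\operatorname{Re}(\gamma_2\cdot\tau)=1-\operatorname{Re}(1/\tau)=0$, so that $w:=\gamma_2\cdot\tau=\tfrac{\tau-1}{\tau}\in i\mathbb{R}_{>0}$. Since $c\tau+d=\tau$ for $\gamma_2$, the modularity formulas \eqref{II-30-00}, \eqref{II-31-1-00} and the Legendre relation give $g_2(\tau)=\tau^{-4}g_2(w)$, $g_3(\tau)=\tau^{-6}g_3(w)$ and $\eta_1(\tau)=\tau^{-2}\eta_1(w)+2\pi i\tau^{-1}$. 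Substituting these into \eqref{t-deform-1} yields the transformation law
\begin{equation*}
h_t(\tau)=\tau^{-8}\bigl(h_t(w)-36\pi i\,t\,\tau\,g_3(w)\bigr),\qquad \tau\in A,\ w=\gamma_2\cdot\tau .
\end{equation*}

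Now suppose $h_t(\tau)=0$ for some $\tau\in A$. Since $\tau\neq0$, the displayed identity forces $h_t(w)=36\pi i\,t\,\tau\,g_3(w)$. By Lemma \ref{lemma-etai} the left-hand side $h_t(w)=h_t(ib)$ is a positive real number, while \eqref{ib-real} gives $g_3(w)=g_3(ib)\in\mathbb{R}$. Writing $\tau=x+iy$ with $x=\operatorname{Re}\tau\in(0,1)$ (the endpoints $0,1$ are cusps lying outside $\mathbb{H}$), the imaginary part of the right-hand side equals $36\pi\,t\,g_3(ib)\,x$. Since the left-hand side is real, matching imaginary parts forces $t\,g_3(ib)\,x=0$; as $x\neq0$, either $t=0$ or $g_3(ib)=0$, and in both cases the right-hand side is $0$, contradicting $h_t(ib)>0$. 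Hence $h_t$ has no zero on $A$.

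The step I expect to be the main obstacle is the arc, and within it the non-modularity of $\eta_1$: its quasi-period transformation contributes the extra summand $2\pi i\tau^{-1}$, which is exactly what produces the ``off-real'' term $-36\pi i\,t\,\tau\,g_3(w)$ in the transformation law. The entire argument hinges on this term being forced purely imaginary (once the reality of $g_3(ib)$ and $h_t(ib)$ is accounted for) whenever $\operatorname{Re}\tau\neq0$, so that it can never cancel the genuine positive real quantity $h_t(ib)$. As a consistency check on the one potentially delicate point $\tau=\tfrac12+\tfrac{i}{2}$, where $w=i$ and $g_3(i)=0$, the formula degenerates to $h_t(\tau)=\tau^{-8}g_2(i)^2\neq0$, in agreement with the general conclusion.
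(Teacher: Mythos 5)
Your proof is correct and follows essentially the same route as the paper's: both reduce the arc $|\tau-\tfrac12|=\tfrac12$ to the imaginary axis via a M\"obius transformation in $SL(2,\mathbb{Z})$ (the paper uses $\tau\mapsto\tfrac{\tau}{1-\tau}$, you use $\tau\mapsto\tfrac{\tau-1}{\tau}$) and then derive a contradiction from the positivity of $h_t$ on $i\mathbb{R}_{>0}$ combined with the purely imaginary correction term produced by the quasi-periodicity of $\eta_1$. The choice of transformation and the phrasing of the final contradiction (matching imaginary parts versus the paper's ``$-1\in i\mathbb{R}$'') are only cosmetic differences.
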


\begin{proof}
Suppose $h_{t}(\tau)=0$ for some $t\in [0,1]$ and $\tau \in \partial F_{0}\cap \mathbb{H}$. By (\ref{g3eta1-0}) we have $\tau \not\in i\mathbb{R}_{>0}$. If $\tau\in i\mathbb{R}_{>0}+1$, then it follows from $g_k(\tau-1)=g_k(\tau)$, $k=2,3$, and $\eta_1(\tau-1)=\eta_1(\tau)$ that
\[h_t(\tau-1)=g_2(\tau-1)^2-18t\eta_1(\tau-1) g_3(\tau-1)=h_{t}(\tau)=0,\]
a contradiction with (\ref{g3eta1-0}) and $\tau-1\in i\mathbb{R}_{>0}$.

Therefore, $|\tau-\frac{1}{2}|=\frac{1}{2}$ and then $\tau^{\prime}:=\frac{\tau}{1-\tau}\in i\mathbb{R}_{>0}$. By $g_{2}(\tau^{\prime})=(1-\tau)^{4}g_{2}(\tau)$, $g_{3}(\tau^{\prime})=(1-\tau)^{6}g_{3}(\tau)$
and%
\begin{equation}
\eta_{2}(\tau^{\prime})=(1-\tau)\eta_{2}(\tau),\text{ }\eta_{1}(\tau^{\prime
})=(1-\tau)(\eta_{1}(\tau)-\eta_{2}(\tau)), \label{c-5}%
\end{equation}
we easily obtain
\begin{align*}
&(-1-\tau')g_2(\tau')^2-18t(-\eta_1(\tau')-\eta_2(\tau'))g_3(\tau')\\
=&-(1-\tau)^7(g_2(\tau)^2-18t\eta_1(\tau)g_3(\tau))=(\tau-1)^7h_{t}(\tau)=0.
\end{align*}
This, together with $\eta_2(\tau')=\tau'\eta_1(\tau')-2\pi i$, $\tau'\in i\mathbb{R}_{>0}$ and (\ref{g3eta1-0}), implies
\[-1=\tau'+i\frac{36 t\pi g_3(\tau')}{(g_2^2-18t\eta_1 g_3)(\tau')}\in i\mathbb{R},\]
a contradiction. The proof is complete.
\end{proof}

\begin{lemma}
\label{infinity-behavior-t}Let $t\in [0,1)$ and $\tau\in F_0$. Then
\begin{equation}\label{infty-be-t}
\lim_{\tau\to\infty}h_t(\tau)=\tfrac{16}{9}(1-t)\pi^8>0;
\end{equation}
\begin{equation}\label{0-be-t}
\lim_{\tau\to 0}h_t(\tau)= \infty;\quad  \lim_{\tau\to 1}h_t(\tau) =\infty.
\end{equation}
\end{lemma}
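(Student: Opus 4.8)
The plan is to obtain all three limits from the behaviour of $g_2,g_3,\eta_1$ at the cusp $\infty$, using the $q$-expansion there and transporting the other two cusps to $\infty$ by the transformation laws \eqref{II-30-00}--\eqref{II-31-1-00} together with the Legendre relation.

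First I would handle \eqref{infty-be-t}. As $\tau\to\infty$ in $F_0$ we have $q=e^{2\pi i\tau}\to0$; the expansion \eqref{ex-2} for $g_3$, the analogous $g_2=\frac{4\pi^4}{3}\bigl(1+240\sum_k\sigma_3(k)q^k\bigr)$, and $\eta_1=\frac{\pi^2}{3}\bigl(1-24\sum_k\sigma_1(k)q^k\bigr)$ give the constant terms $g_2\to\frac{4\pi^4}{3}$, $g_3\to\frac{8\pi^6}{27}$, $\eta_1\to\frac{\pi^2}{3}$. Substituting these into $h_t=g_2^2-18t\eta_1g_3$ produces
\[
\lim_{\tau\to\infty}h_t=\Bigl(\tfrac{4\pi^4}{3}\Bigr)^2-18t\cdot\tfrac{\pi^2}{3}\cdot\tfrac{8\pi^6}{27}=\tfrac{16}{9}(1-t)\pi^8,
\]
which is positive precisely because $t<1$.

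Next I would treat the cusp $0$ by conjugating with $S:\tau\mapsto w:=-1/\tau$. The weight-$4$ and weight-$6$ laws in \eqref{II-30-00} give $g_2(\tau)=\tau^{-4}g_2(w)$ and $g_3(\tau)=\tau^{-6}g_3(w)$, while \eqref{II-31-1-00} with $\gamma=S$ combined with $\eta_2(w)=w\eta_1(w)-2\pi i$ gives $\eta_1(\tau)=\tau^{-2}\eta_1(w)+2\pi i\,\tau^{-1}$. As $\tau\to0$ inside $F_0$ the constraint $|\tau|^2\ge\operatorname{Re}\tau$ forces $\operatorname{Re}\tau\lesssim(\operatorname{Im}\tau)^2$, whence $|\tau|^2\sim(\operatorname{Im}\tau)^2$ and $\operatorname{Im}w=\operatorname{Im}\tau/|\tau|^2\sim(\operatorname{Im}\tau)^{-1}\to+\infty$; thus $g_2(w),g_3(w),\eta_1(w)$ tend to their constant terms. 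Keeping only the most singular part (the $2\pi i\,\tau^{-1}$ piece of $\eta_1$ contributes only at order $\tau^{-7}$ and is negligible) yields $g_2^2\sim\frac{16\pi^8}{9}\tau^{-8}$ and $\eta_1g_3\sim\frac{8\pi^8}{81}\tau^{-8}$, so
\[
h_t(\tau)\sim\tfrac{16}{9}(1-t)\pi^8\,\tau^{-8}.
\]
Since $t<1$ the leading coefficient is nonzero and $\tau^{-8}\to\infty$, giving $\lim_{\tau\to0}h_t=\infty$.

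The cusp $1$ reduces to the cusp $0$ via the $1$-periodicity $h_t(\tau)=h_t(\tau-1)$ of $g_2,g_3,\eta_1$: writing $\sigma:=\tau-1\to0$ (with $\operatorname{Re}\sigma\le0$ inside $F_0$), the same boundary-circle estimate gives $\operatorname{Im}(-1/\sigma)\to+\infty$, so the cusp-$0$ asymptotics apply and $h_t(\tau)=h_t(\sigma)\sim\frac{16}{9}(1-t)\pi^8\sigma^{-8}\to\infty$, which proves \eqref{0-be-t}. The only delicate points are verifying that the approach to each cusp within $F_0$ does send the conjugated variable to $i\infty$ (so that the $q$-expansions control the limit) and tracking orders carefully enough that the subleading term of $\eta_1$ does not affect the leading $\tau^{-8}$ coefficient; in both singular limits the hypothesis $t<1$ is exactly what keeps that coefficient $\frac{16}{9}(1-t)\pi^8$ from vanishing (the case $t=1$, where the leading terms cancel, is genuinely excluded).
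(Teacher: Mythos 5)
Your proof is correct, and it reaches the same leading asymptotics as the paper (the constant $\tfrac{16}{9}(1-t)\pi^8$ at $\infty$, and $h_t(\tau)\sim\tfrac{16}{9}(1-t)\pi^8\,\tau^{-8}$ at the cusp $0$), but the mechanics at the cusps are genuinely different. You conjugate by the inversion $S:\tau\mapsto w=-1/\tau$ and use the Legendre relation to write $\eta_1(\tau)=\tau^{-2}\eta_1(w)+2\pi i\,\tau^{-1}$, which gives the clean identity $h_t(\tau)=\tau^{-8}h_t(w)-36\pi i\,t\,\tau^{-7}g_3(w)$ and makes transparent why the inhomogeneous term of $\eta_1$ only enters at order $\tau^{-7}$; you then handle the cusp $1$ by $1$-periodicity. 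The price is the geometric verification that $\operatorname{Im}(-1/\tau)\to+\infty$ as $F_0\ni\tau\to 0$ (your boundary-circle estimate $\operatorname{Re}\tau\lesssim(\operatorname{Im}\tau)^2$ is the right one, and the $q$-expansions indeed only require $\operatorname{Im}w\to\infty$, not $w\in F_0$). The paper instead introduces the auxiliary family $f_{t,C}(\tau)=(C-\tau)h_t(\tau)-36t\pi i\,g_3(\tau)$ and uses the two $F_0$-preserving maps $\hat\tau=\frac{\tau-1}{\tau}$ and $\hat\tau=\frac{1}{1-\tau}$, which permute the cusps of $F_0$ and yield $f_{t,1}(\hat\tau)=\tau^7h_t(\tau)$, $f_{t,0}(\hat\tau)=(\tau-1)^7h_t(\tau)$; this makes ``$\hat\tau\to\infty$ inside $F_0$'' automatic, and, more importantly, the same function $f_{t,C}$ (at $t=1$ it becomes the $f_C$ of \eqref{fC-exp}) is the central object of Sections \ref{twocriticalp}--\ref{smoothcurves}, so the paper's formulation sets up machinery reused later. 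Your argument is self-contained and slightly more elementary; the paper's is better integrated with what follows.
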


\begin{proof} Denote $q=e^{2\pi i \tau}$ and recall the well known $q$-expansions (cf. \cite[p.44]{Lang})
\begin{equation}\label{q-exp-eta1}
\eta_{1}(\tau)=\tfrac{1}{3}\pi^{2}(1-24q-72q^{2})+O(|q|^{3}),
\end{equation}
\[
g_{2}(\tau)=\tfrac{4}{3}\pi^{4}(1+240(q+9q^{2}))+O(|q|^{3}), \]
\begin{equation}\label{q-exp-g3}
g_{3}(\tau)=\tfrac{8}{27}\pi^{6}(1-504(q+33q^{2}))+O(|q|^{3}). \end{equation}
Clearly (\ref{infty-be-t}) follows from (\ref{t-deform-1}) and these $q$-expansions (\ref{q-exp-eta1})-(\ref{q-exp-g3}).  For $t\in [0,1)$ and $C\in\mathbb{R}$, we define
\begin{align*}
f_{t,C}(\tau):=&(C-\tau)g_2(\tau)^2-18t(C\eta_1(\tau)-\eta_2(\tau))g_3(\tau)\\
=&(C-\tau)h_t(\tau)-36t\pi i g_3(\tau).
\end{align*}
Then
\begin{equation}
f_{t,C}(\tau)=-\tfrac{16}{9}(1-t)\pi^8\tau+O(1)\to \infty\quad\text{as $F_0\ni \tau\to \infty$}.
\end{equation}

To study the asymptotics at the cusp $0$,
we let $\hat{\tau}:=\frac{\tau-1}{\tau}$.
 By $g_{2}(\hat{\tau})=\tau^{4}g_{2}(\tau)$, $g_{3}(\hat{\tau})=\tau^{6}g_{3}(\tau)$
and%
\begin{equation}
\eta_{1}(\hat{\tau})=\tau\eta_{2}(\tau),\text{ }\eta_{2}(\hat{\tau})=\tau(\eta_{2}(\tau)-\eta_{1}(\tau)), \label{c-c-5}%
\end{equation}
a direct computation gives $f_{t, 1}(\hat{\tau})=\tau^7 h_{t}(\tau)$. When $\tau \in F_{0}$ and $\tau \rightarrow0$, we have $\hat{\tau}=\frac{\tau-1}{\tau}\in
F_{0}$ and $\hat{\tau}\rightarrow \infty$. Consequently,
\begin{align}
h_{t}(\tau)=\frac{ f_{t,1}(\hat{\tau})}{\tau^{7}}=\frac{1}{\tau^{7}}(-\tfrac{16(\tau-1)}{9\tau}(1-t)\pi^8+O(1))\quad\text{as $\tau\to 0$}.\label{ayp-8-t}%
\end{align}
This proves the first formula in (\ref{0-be-t}).

Finally we let $\hat{\tau}:=\frac{1}{1-\tau}$. By $g_{2}(\hat{\tau})=(1-\tau)^{4}g_{2}(\tau)$, $g_{3}(\hat{\tau})=(1-\tau)^{6}g_{3}(\tau)$
and%
\begin{equation}
\eta_{1}(\hat{\tau})=(1-\tau)(\eta_1(\tau)-\eta_{2}(\tau)),\text{ }\eta_{2}(\hat{\tau})=(1-\tau)\eta_{1}(\tau), \label{cc-c-5}%
\end{equation} we easily obtain $f_{t, 0}(\hat{\tau})=(\tau-1)^7 h_{t}(\tau)$. When $\tau \in F_{0}$ and $\tau \rightarrow1$, we have $\hat{\tau}=\frac{1}{1-\tau}\in
F_{0}$ and $\hat{\tau}\rightarrow \infty$.
Consequently,
\begin{align}\label{eq-h-t}
h_{t}(\tau)=\frac{f_{t,0}(\hat{\tau})}{(\tau-1)^{7}}
=\frac{1}{(\tau-1)^{7}}(\tfrac{-16}{9(1-\tau)}(1-t)\pi^8+O(1))\;\text{as $\tau\to 1$},
\end{align}
which implies the second formula in (\ref{0-be-t}).
\end{proof}

\begin{lemma}
\label{lemma-ht}For any $t\in (0,1)$, $h_{t}(\tau
)$ has exactly two different zeros in $F_{0}$, which both lie on the line $\operatorname{Re}\tau=\frac{1}{2}$ and are simple zeros.
\end{lemma}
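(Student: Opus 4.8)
The plan is to determine the global number of zeros of $h_t$ in $F_0$ by the method of continuity, and then to force all of them onto the line $\operatorname{Re}\tau=\tfrac12$ by a sign analysis of the real-valued restriction $\phi(b):=h_t(\tfrac12+ib)$. This restriction is genuinely real: on $\operatorname{Re}\tau=\tfrac12$ one has $q=e^{2\pi i\tau}=-e^{-2\pi b}\in\mathbb{R}$, so the real $q$-expansions \eqref{ex-2} and \eqref{q-exp-eta1}--\eqref{q-exp-g3} force $g_2(\tfrac12+ib)$, $g_3(\tfrac12+ib)$, $\eta_1(\tfrac12+ib)$, and hence $\phi(b)$, to be real for every $b>0$.

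First I would show that the number $N(t)$ of zeros of $h_t$ in $\mathring{F}_0$, counted with multiplicity, equals $2$ for every $t\in[0,1)$. By Lemma \ref{lemma-t10} there are no zeros on $\partial F_0\cap\mathbb{H}$, while Lemma \ref{infinity-behavior-t} keeps $h_t$ away from $0$ near all three cusps; since the governing terms there carry the factor $1-t$, these bounds are uniform for $t$ in any compact subinterval $[0,1-\delta]$. Thus on a fixed truncation of $F_0$ the integral $\tfrac{1}{2\pi i}\oint h_t'/h_t\,d\tau$ is a continuous, integer-valued function of $t$, hence constant on $[0,1)$, and equals $N(t)$. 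To evaluate it I would specialize to $t=0$, where $h_0=g_2^2$: since $g_2=\tfrac{4\pi^4}{3}E_4$, the valence formula for $\Gamma_0(2)$ (which has no elliptic point of order $3$) shows that $E_4$ has in $F_0$ exactly one, simple, zero, namely $\rho:=e^{i\pi/3}=\tfrac12+i\tfrac{\sqrt3}{2}$. Hence $h_0$ has a single zero, of multiplicity $2$, and $N(0)=2$, so $N(t)=2$ throughout $[0,1)$.

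Next I would evaluate $\phi$ at three points of the ray $b\in[\tfrac12,\infty)$ parametrizing $\{\operatorname{Re}\tau=\tfrac12\}\cap F_0$. As $b\to\infty$, \eqref{infty-be-t} gives $\phi(b)\to\tfrac{16}{9}(1-t)\pi^8>0$. At $b=\tfrac{\sqrt3}{2}$ (i.e. $\tau=\rho$) we have $g_2(\rho)=0$, so $\phi(\tfrac{\sqrt3}{2})=-18t\,\eta_1(\rho)g_3(\rho)$; writing $\rho=\omega+1$ with $\omega=e^{2\pi i/3}$, periodicity gives $\eta_1(\rho)=\eta_1(\omega)$ and $g_3(\rho)=g_3(\omega)$, and the real, small parameter $q=-e^{-\pi\sqrt3}$ at $\omega$ makes both series positive, so $\phi(\tfrac{\sqrt3}{2})<0$ for $t>0$. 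At the bottom $b=\tfrac12$ (i.e. $\tau=\tfrac12+\tfrac i2$), this point is $SL(2,\mathbb{Z})$-equivalent to $i$ (indeed $\gamma_1\cdot(\tfrac12+\tfrac i2)=1+i$), whence $g_3(\tfrac12+\tfrac i2)=0$ while $g_2(\tfrac12+\tfrac i2)\neq0$, giving $\phi(\tfrac12)=g_2(\tfrac12+\tfrac i2)^2>0$. The sign pattern $+,-,+$ produces at least two sign changes of $\phi$ on $(\tfrac12,\infty)$, hence at least two zeros of $h_t$ on $\operatorname{Re}\tau=\tfrac12$, each of odd order.

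Finally, each sign-change zero of $\phi$ is a zero of $h_t$ of odd order, hence of order $\geq1$; since their orders sum to at most $N(t)=2$, there are exactly two of them, both simple, and $N(t)=2$ leaves no room for any further zero of $h_t$ in $F_0$. This is exactly the assertion. The main obstacle is the global count $N(t)=2$: one must justify that the argument-principle count stays locally constant uniformly up to—but not including—$t=1$, the delicate point being the cusp $\infty$, where the value $\tfrac{16}{9}(1-t)\pi^8$ degenerates to $0$ precisely at $t=1$, so uniform control is available only on each $[0,1-\delta]$; one must also read off $N(0)=2$ correctly from the double zero of $g_2^2$ at $\rho$. By comparison, the three sign evaluations reduce—once $\rho$ and $\tfrac12+\tfrac i2$ are recognized as the translates of $\omega$ and $i$ at which $g_2$, respectively $g_3$, vanishes—to routine positivity checks on convergent $q$-series.
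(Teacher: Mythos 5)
Your proposal is correct and follows essentially the same route as the paper: the argument principle plus the boundary/cusp lemmas give a constant zero count equal to $2$ (coming from the double zero of $h_0=g_2^2$ at $e^{\pi i/3}$), and the sign pattern $+,-,+$ of the real function $h_t(\tfrac12+ib)$ at $b=\tfrac12$, $b=\tfrac{\sqrt3}{2}$, $b\to\infty$ forces both zeros onto the line $\operatorname{Re}\tau=\tfrac12$ and makes them simple. Your extra care about uniformity of the argument-principle count on compact subintervals of $[0,1)$ is a point the paper passes over silently, but it is the same proof.
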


\begin{proof}
For $t\in [0,1)$, Lemma \ref{infinity-behavior-t} implies
\[
h_t(\tau)\not \rightarrow 0\;\text{ as }\;F_{0}\ni \tau \rightarrow
\infty,0,1\;\text{respectively}.
\]
Together with Lemma \ref{lemma-t10} that $h_t
(\tau)\not =0$ on $\partial F_{0}\cap \mathbb{H}$, it is easy to apply the
argument principle to conclude that \emph{the number of zeros (up to multiplicity) of $h_t
(\tau)$ in $F_{0}$ is a constant for $t\in [0,1)$}.
Since it is well known that $h_0(\tau)=g_2(\tau)^2$ has a unique zero $\tau=e^{\pi i/3}$ of multiplicity $2$ in $F_0$, we conclude that $h_{t}(\tau
)$ has at most two different zeros in $F_{0}$ for each $t\in (0,1)$.

Fix any $t\in (0,1)$. Note that $g_2, g_3, \eta_1\in\mathbb{R}$ for $\tau=\frac{1}{2}+ib$ with $b>0$. Since $g_3(\frac{1}{2}+\frac{1}{2}i)=0$, we have
\begin{equation}\label{ht12}h_t=g_2^2-18t\eta_1g_3>0\quad \text{at }\tau=\tfrac{1}{2}+\tfrac{1}{2}i.\end{equation}
Since $g_2(\frac{1}{2}+\frac{\sqrt{3}}{2}i)=0$, $g_3(\frac{1}{2}+\frac{\sqrt{3}}{2}i)>0$ and \begin{equation}\label{eta1-rho}\eta_1(\tfrac{1}{2}+\tfrac{\sqrt{3}}{2}i)=\tfrac{2\pi}{\sqrt{3}}\quad\text{(see e.g. \cite[(4.1)]{CL-E2})},\end{equation} we have
\begin{equation}\label{ht121}h_t=g_2^2-18t\eta_1g_3<0\quad \text{at }\tau=\tfrac{1}{2}+\tfrac{\sqrt{3}}{2}i.\end{equation}
Together with (\ref{infty-be-t}) that $\lim_{b\to +\infty}h_t(\frac{1}{2}+ib)>0$, we obtain the existence of $\frac{1}{2}<b_{1,t}<\frac{\sqrt{3}}{2}<b_{2,t}$ such that
\[h_t(\tfrac{1}{2}+i b_{1,t})=h_t(\tfrac{1}{2}+i b_{2,t})=0.\]
Therefore, $\tfrac{1}{2}+i b_{1,t}, \tfrac{1}{2}+i b_{2,t}$ give all the zeros of $h_t(\tau)$ in $F_0$ and are all simple. Moreover, $\lim_{t\to 0}b_{k,t}=\frac{\sqrt{3}}{2}$ for $k=1,2$. The proof is complete.
\end{proof}

We are in the position to prove Theorem \ref{uniquezero}.

\begin{proof}[Proof of Theorem \ref{uniquezero}] First we claim that $h_1=g_2^2-18\eta_1g_3$ has a unique zero on the line $\{\tau |\tau=\frac{1}{2}+ib\}\cap F_0$. This assertion was already proved in \cite{SS} as pointed out in Remark \ref{rmk-ss}. Here we give an alternative proof of this assertion.
Consider $\tau=\frac{1}{2}+ib$ with $b\geq\frac{1}{2}$. Then \cite[Theorem 1.5]{CL-E2} says that
\[\frac{d}{db}\eta_1(\tfrac{1}{2}+ib)=\tfrac{-1}{24\pi}(12\eta_1^2-g_2)(\tfrac{1}{2}+ib)<0.\]
Similarly as (\ref{ht12}), we have $h_1(\frac{1}{2}+\frac{1}{2}i)>0$. Suppose $h_1=g_2^2-18\eta_1g_3=0$ at some $\tau=\frac{1}{2}+ib$ with $b>\frac{1}{2}$, then it follows from
(\ref{2derivative-1}) and $g_3(\frac{1}{2}+ib)>0$ that
\begin{align*}\frac{d}{db}h_1(\tfrac{1}{2}+ib)&=i(g_2^2-18\eta_1 g_3)'\\
&=-\tfrac{21}{4\pi}g_3(\tfrac{1}{2}+ib)(12\eta_1^2-g_2)(\tfrac{1}{2}+ib)<0.\end{align*}
Therefore, $h_1=g_2^2-18\eta_1g_3$ has at most one zero on the line $\{\tau |\tau=\frac{1}{2}+ib, b\geq \frac{1}{2}\}=\{\tau |\tau=\frac{1}{2}+ib\}\cap F_0$. Together with $h_1(\frac{1}{2}+\frac{\sqrt{3}}{2}i)<0$ (see (\ref{ht121})), we conclude that there is $b_{\infty}\in (\frac{1}{2}, \frac{\sqrt{3}}{2})$ such that $\tau_{\infty}:=\frac{1}{2}+i b_{\infty}$ is the unique zero of $h_1$ on the line $\{\tau |\tau=\frac{1}{2}+ib\}\cap F_0$.

Suppose $h_1=g_2^2-18\eta_1g_3$ has a zero $\tilde{\tau}\in F_0\setminus\{\tau |\tau=\frac{1}{2}+ib\}$, then $\tilde{\tau}\in \mathring{F}_{0}$ by Lemma \ref{lemma-t10}.
Consequently, for $t<1$ with $|1-t|$ small enough, $h_t$ has a zero $\tau_t$ close to $\tilde{\tau}$, which is a contradiction with Lemma \ref{lemma-ht}. This shows that $g_2^2-18\eta_1g_3\neq 0$ in $F_0\setminus\{\tau |\tau=\frac{1}{2}+ib\}$ and so $\tau_{\infty}$ is the unique zero of $g_2^2-18\eta_1g_3$ in $F_0$. The proof is complete.
\end{proof}

\begin{remark} Note that $g_2^2-18\eta_1g_3=0$ at $\tau=\frac{1}{2}+i\infty$; see (\ref{infty-be-t}) with $t=1$.
Recall the two zeros $\tfrac{1}{2}+i b_{1,t}, \tfrac{1}{2}+i b_{2,t}$ of $h_t(\tau)$ in $F_0$ for $t\in (0,1)$. Together with Theorem \ref{uniquezero}, we conclude that
\[\lim_{t\to 1}b_{1,t}=b_{\infty}=0.6341269863,\qquad \lim_{t\to 1} b_{2,t}=+\infty.\]
\end{remark}

\section{Two critical points in $\gamma(F_0)$ for $c\neq 0$}
\label{twocriticalp}

This section is devoted to prove Theorem \ref{thm-number}. By Theorem \ref{uniquezero}, it suffices for us to consider $\gamma(F_0)$ with $c\neq 0$. For this purpose,
we define holomorphic function $f_C(\tau)$ for each $C\in\mathbb{R}$ by
\begin{align}\label{fC-exp}f_C(\tau):=&(C-\tau)g_2(\tau)^2-18(C\eta_1(\tau)-\eta_2(\tau))g_3(\tau)\nonumber\\
=&(C-\tau)(g_2(\tau)^2-18\eta_1(\tau)g_3(\tau))-36\pi i g_3(\tau).\end{align}
Note that $f_C(\tau)=0$ is equivalent to
\begin{equation}\label{Cphi}C=\phi(\tau):=\tau+\frac{36\pi i g_3(\tau)}{(g_2^2-18\eta_1 g_3)(\tau)}.\end{equation}

First we need to prove the following result concerning the zero structure of $f_{C}(\tau)$ in $F_{0}$, which is of independent interest.

\begin{theorem}
\label{Unique-pole}Let $C\in\mathbb{R}$ and recall $\tau_{\infty}=\frac{1}{2}+ib_{\infty}$ with $b_{\infty}\in(\frac{1}{2},\frac{\sqrt{3}}{2})$ in Theorem \ref{uniquezero}. Then the following statements hold.
\begin{itemize}
\item[(1)] For any $C\in \mathbb{R}\backslash \{0,1\}$, $f_{C}(\tau)$
has exactly two different zeros in $F_{0}$, which both belong to $\mathring{F}_{0}$ and are simple.
\item[(2)] $\tau_0:=\frac{1}{1-\tau_{\infty}}\in \mathring{F}_{0}$ is the unique zero of $f_0(\tau)$ in $F_0$. Clearly $\operatorname{Re}\tau_0>\frac{1}{2}$ and $|\tau_0-1|=1$.
\item[(3)] $\tau_1:=\frac{\tau_{\infty}-1}{\tau_{\infty}}\in \mathring{F}_{0}$ is the unique zero of $f_1(\tau)$ in $F_0$. Clearly $\tau_1=1-\overline{\tau_0}$, $\operatorname{Re}\tau_1<\frac{1}{2}$ and $|\tau_1|=1$.
\end{itemize}
\end{theorem}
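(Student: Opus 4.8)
The plan is to reduce everything to the meromorphic function $\phi$ of \eqref{Cphi}, for which $f_C(\tau)=0$ is equivalent to $\phi(\tau)=C$ (note $f_C(\tau_\infty)=-36\pi i g_3(\tau_\infty)\neq 0$, so no zero is lost at the pole), and then to count the solutions of $\phi(\tau)=C$ in $F_0$ by the argument principle together with continuity in the real parameter $C$. The engine is the transformation law: from \eqref{II-30-00}--\eqref{II-31-1-00} one checks that for every $\gamma=\bigl(\begin{smallmatrix}a&b\\c&d\end{smallmatrix}\bigr)\in SL(2,\mathbb{Z})$,
\[
f_C(\gamma\tau)=-(Cc-a)(c\tau+d)^{7}f_{\gamma^{-1}\cdot C}(\tau),
\]
equivalently $\phi$ is equivariant, $\phi(\gamma\tau)=\gamma\cdot\phi(\tau)$. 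Since $\tau_\infty$ is the unique and simple pole of $\phi$ in $F_0$ with $\phi(\tau_\infty)=\infty$, the cusp values are $\phi(\infty)=\infty$, $\phi(0)=0$ and $\phi(1)=1$; this already singles out $C\in\{0,1\}$ as the only finite values at which a zero can meet a cusp.

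The first technical step is boundary non-vanishing: $f_C(\tau)\neq 0$ for all real $C$ and all $\tau\in\partial F_0\cap\mathbb{H}$. On the left edge $\tau=iy$ the quantities $g_2,g_3,\eta_1$ are real and $g_2^2-18\eta_1 g_3>0$ by Lemma \ref{lemma-etai}, so $\phi(iy)=i\bigl(y+\tfrac{36\pi g_3(iy)}{(g_2^2-18\eta_1 g_3)(iy)}\bigr)$ is purely imaginary; I would show its imaginary part is strictly positive by using $g_3'(iy)=-i\tfrac{d}{dy}g_3(iy)$ to rewrite it as a positive multiple of $\tfrac{d}{dy}\!\bigl(y^6 g_3(iy)\bigr)$, and then noting $y^6 g_3(iy)=-g_3(i/y)$ is strictly increasing because $\tfrac{d}{db}g_3(ib)>0$ (Lemma \ref{lemma-etai}). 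Hence $\phi(iy)\in i\mathbb{R}_{>0}$. The right edge follows from $\phi(\tau+1)=\phi(\tau)+1$, and the bottom arc $|\tau-\tfrac12|=\tfrac12$ follows from the equivariance applied to $\tau\mapsto\tfrac{\tau}{1-\tau}$, which carries it to $i\mathbb{R}_{>0}$; in all three cases $\phi$ lands in $\overline{\mathbb{H}}$ and is real only at the cusps $0,1$. Combined with the cusp analysis $\lim_{\tau\to\infty}f_C(\tau)=-\tfrac{32}{3}\pi^7 i\neq 0$ (from the $q$-expansions, using the cancellation $g_2^2-18\eta_1 g_3=O(|q|)$) and the blow-up of $f_C$ at $0,1$ for $C\notin\{0,1\}$ (again from the transformation law), this shows the zero count $N(C)$ is finite and locally constant on each of $(-\infty,0)$, $(0,1)$, $(1,\infty)$.

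To evaluate $N(C)$ for $C\in\mathbb{R}\setminus\{0,1\}$ I would compute the winding of $f_C(\partial F_0)$ about $0$, equivalently $N(C)=1+\operatorname{wind}\bigl(\phi(\partial F_0),C\bigr)$ after correcting for the single pole at $\tau_\infty$. By the previous step the images of the three finite boundary arcs lie on $\partial F_0$ of the target copy (the two vertical edges and the semicircle $|\,\cdot-\tfrac12|=\tfrac12$) and together contribute no net winding about a real $C\notin\{0,1\}$; the decisive contribution comes from the cusp $\infty$, where $\phi(\tau)=\tau+\tfrac{i}{168\pi}e^{-2\pi i\tau}+\cdots$, so that along $\operatorname{Im}\tau=R$ with $\operatorname{Re}\tau$ running from $1$ to $0$ the image winds once counterclockwise and contributes $+2\pi$. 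This yields $\operatorname{wind}=1$ and hence $N(C)=2$; the zeros lie in $\mathring F_0$ because $f_C\neq 0$ on $\partial F_0\cap\mathbb{H}$, and they are simple by the same device as Lemma \ref{lemma-simplezero} (using $g_3'=\tfrac{-i}{6\pi}(g_2^2-18\eta_1 g_3)$ one finds a double zero would force $g_2^3=27g_3^2$, impossible on $\mathbb{H}$). The reflection symmetry $f_C(1-\bar\tau)=-\overline{f_{1-C}(\tau)}$, checked from the conjugation behavior of $g_2,g_3,\eta_1,\eta_2$, interchanges $(-\infty,0)$ with $(1,\infty)$ and fixes $(0,1)$, so it suffices to run the winding count on $(0,1)$ and $(1,\infty)$.

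Finally, for the exceptional values: at $C=0$ exactly one zero escapes to the cusp $0$ (the only cusp with $\phi=0$), whence $N(0)=1$, and the surviving zero is $\tau_0=\tfrac{1}{1-\tau_\infty}=\gamma_1\tau_\infty$, identified directly from $\phi(\tau_0)=\gamma_1\cdot\phi(\tau_\infty)=\gamma_1(\infty)=0$; a short computation with $b_\infty\in(\tfrac12,\tfrac{\sqrt3}{2})$ places $\tau_0\in\mathring F_0$ on $|\tau-1|=1$ with $\operatorname{Re}\tau_0>\tfrac12$. The case $C=1$ is then obtained from $C=0$ by the reflection $\tau\mapsto 1-\bar\tau$, giving the unique zero $\tau_1=1-\overline{\tau_0}=\tfrac{\tau_\infty-1}{\tau_\infty}=\gamma_2\tau_\infty\in\mathring F_0$. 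The step I expect to be the main obstacle is the winding contribution at the cusp $\infty$: it is precisely the $+2\pi$ there that distinguishes the correct answer $N=2$ from the naive count $N=1$, and making it rigorous requires controlling the transition between the exponentially large term $\tfrac{i}{168\pi}e^{-2\pi i\tau}$ and the two vertical edges uniformly in $C$, along with the monotonicity input $\tfrac{d}{dy}\bigl(y^6 g_3(iy)\bigr)>0$ needed for the boundary estimate.
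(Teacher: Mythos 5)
Your proposal is correct in outline but takes a genuinely different route from the paper for the main counting step, part (1). The paper never computes a winding number for general $C$: it only uses the argument principle to show the zero count is \emph{locally constant} on each of $(-\infty,0)$, $(0,1)$, $(1,+\infty)$ (Lemma \ref{yl-1}), and then evaluates the constant by degenerating $C\to\pm\infty$, where the two zeros are located explicitly --- one escaping to $\tfrac14+i\infty$ or $\tfrac34+i\infty$ via the $q$-expansion \eqref{phi-infty}, the other converging to the pole $\tau_\infty$ of $\phi$ --- and finally transfers the count from $(1,+\infty)\cup(-\infty,0)$ to $(0,1)$ by the modular identity \eqref{eqw}. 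You instead evaluate $N(C)=1+\operatorname{wind}(\phi(\partial F_0),C)$ directly for every $C$, using the equivariance $\phi(\gamma\tau)=\gamma\cdot\phi(\tau)$ (which you state correctly and which subsumes the paper's \eqref{g3-fC} and \eqref{eqw}) to confine $\phi(\partial F_0\cap\mathbb{H})$ to the two vertical rays and the upper semicircle, so that the finite arcs contribute nothing and the $+2\pi$ comes entirely from the huge near-circle $\tau+\tfrac{i}{168\pi}q^{-1}$ at the cusp $\infty$. This buys a uniform treatment of all three intervals and dispenses with the degeneration, at the price of the contour bookkeeping near the three cusps that you yourself flag as the main obstacle; the paper's route avoids that entirely by only ever using the argument principle for a homotopy in $C$. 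Your strengthened boundary estimate $\phi(i\mathbb{R}_{>0})\subset i\mathbb{R}_{>0}$ via the monotonicity of $y^6g_3(iy)=-g_3(i/y)$ is a nice addition not in the paper (which only shows $\phi(iy)\in i\mathbb{R}$ and treats $C\in\{0,1\}$ separately through parts (2)--(3)). Parts (2) and (3) you prove essentially as the paper does, via the order-three symmetry of $F_0$ and Theorem \ref{uniquezero}.

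Two small caveats. First, in the simplicity argument your parenthetical only rules out $g_2^3=27g_3^2$; the identity \eqref{fcde} gives $f_C'=-7g_2(g_2^3-27g_3^2)/(g_2^2-18\eta_1g_3)$ at a zero of $f_C$, so you must also exclude $g_2(\tau)=0$, which the paper does by noting that $g_2(\tau)=0$ in $F_0$ forces $\tau=\tfrac12+\tfrac{\sqrt3}{2}i$ and then $C=\tau-\tfrac{2\pi i}{\eta_1(\tau)}=\tfrac12-\tfrac{\sqrt3}{2}i\notin\mathbb{R}$. Second, the $+2\pi$ contribution at the cusp $\infty$ and the $o(1)$ contributions of the truncating arcs at the cusps $0$ and $1$ (where you need $\phi\to0$ and $\phi\to1$ uniformly, i.e.\ \eqref{phi-01}) are asserted rather than carried out; they are doable but constitute the real work of your version of the proof.
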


We will see that Theorem \ref{thm-number}-(2) is a consequence of Theorem \ref{Unique-pole}. The proof of Theorem \ref{Unique-pole} (2)-(3) is easy by applying Theorem \ref{uniquezero}.

\begin{proof}[Proof of Theorem \ref{Unique-pole} (2)-(3)] Suppose $f_0(\tau)=0$ for some $\tau\in F_0$. Then $\tau':=\frac{\tau-1}{\tau}\in F_0$ and it follows from (\ref{c-c-5}) that
\[(g_2^2-18\eta_1g_3)(\tau')=-\tau^7f_0(\tau)=0.\]
So Theorem \ref{uniquezero} implies $\tau'=\tau_{\infty}$, i.e. $\tau=\frac{1}{1-\tau_{\infty}}=:\tau_0$. This proves (2).

Similarly, suppose $f_1(\tau)=0$ for some $\tau\in F_0$. Then $\tau':=\frac{1}{1-\tau}\in F_0$ and it follows from (\ref{cc-c-5}) that
\[(g_2^2-18\eta_1g_3)(\tau')=(1-\tau)^7f_1(\tau)=0.\]
Again this infers $\tau'=\tau_{\infty}$, i.e. $\tau=\frac{\tau_{\infty}-1}{\tau_{\infty}}=:\tau_1$. This proves (3).
\end{proof}

The proof of Theorem \ref{Unique-pole}-(1) is much more delicate, and we need to establish several lemmas first.

\begin{lemma}\label{simplezero-F0}
For any $C\in\mathbb{R}$, any zero of $f_C(\tau)$ in $F_0$ is simple.
\end{lemma}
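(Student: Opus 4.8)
The plan is to show that any zero of $f_C(\tau)$ in $F_0$ is simple by deriving a contradiction from the assumption that both $f_C(\tau_*)=0$ and $f_C'(\tau_*)=0$ at some $\tau_*\in F_0$. First I would compute $f_C'(\tau)$ explicitly using the structure of $f_C$ in \eqref{fC-exp}. Writing $f_C(\tau)=(C-\tau)h_1(\tau)-36\pi i g_3(\tau)$ where $h_1=g_2^2-18\eta_1 g_3$, differentiation gives
\begin{equation}\label{fC-prime}
f_C'(\tau)=-h_1(\tau)+(C-\tau)h_1'(\tau)-36\pi i g_3'(\tau).
\end{equation}
The key inputs are the already-recorded derivative formulas: $g_3'=\tfrac{-i}{6\pi}h_1$ from \eqref{d-eta1}, and $h_1'=(g_2^2-18\eta_1 g_3)'=\tfrac{7i}{4\pi}(4\eta_1 g_2^2-36\eta_1^2 g_3-3g_2 g_3)$ from \eqref{2derivative}. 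Substituting $g_3'$ into \eqref{fC-prime}, the term $-36\pi i g_3'=-36\pi i\cdot\tfrac{-i}{6\pi}h_1=-6h_1$, so the $h_1$ terms combine into $-7h_1(\tau)$, leaving
\begin{equation}\label{fC-prime-2}
f_C'(\tau)=-7h_1(\tau)+(C-\tau)h_1'(\tau).
\end{equation}

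The strategy now is to use the equation $f_C=0$ to eliminate the parameter $C$. At a zero $\tau_*$ we have $C-\tau_*=\tfrac{36\pi i g_3(\tau_*)}{h_1(\tau_*)}$ (this is just \eqref{Cphi}, valid since $h_1(\tau_*)\ne 0$ — indeed if $h_1(\tau_*)=0$ then $f_C(\tau_*)=-36\pi i g_3(\tau_*)$ forces $g_3(\tau_*)=0$, whence $g_2(\tau_*)^2=18\eta_1 g_3=0$ and $g_2^3-27g_3^2=0$, the familiar contradiction). Substituting this expression for $C-\tau_*$ into \eqref{fC-prime-2} and setting $f_C'(\tau_*)=0$ yields, after clearing $h_1(\tau_*)$,
\begin{equation}\label{elim}
-7h_1(\tau_*)^2+36\pi i g_3(\tau_*)h_1'(\tau_*)=0.
\end{equation}
This is now a relation purely among the modular quantities $g_2,g_3,\eta_1$ at $\tau_*$, with $C$ gone. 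The point is that \eqref{elim} is exactly the condition that $h_1$ has a double zero relative to the weighting by $g_3$; in fact one recognizes the left side, up to substituting $h_1=g_2^2-18\eta_1 g_3$ on the first term, as closely related to \eqref{2derivative-1}. Indeed, under the relation that would make $f_C'=0$, I expect \eqref{elim} to reduce — after inserting the explicit $h_1'$ and simplifying — to a multiple of $g_3(12\eta_1^2-g_2)$ or to force $12\eta_1^2-g_2=0$, which combined with the algebraic identity leads to $g_2^3=27g_3^2$, contradicting the discriminant being nonzero on $\mathbb{H}$.

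The main obstacle I anticipate is the bookkeeping in \eqref{elim}: one must substitute the long expression for $h_1'$ and reduce the resulting polynomial in $(g_2,g_3,\eta_1)$ modulo the relation $h_1(\tau_*)=g_2^2-18\eta_1 g_3$ (which I may use since $h_1(\tau_*)\ne 0$ does not hold as an identity, so here I keep $h_1$ as is rather than setting it to zero). The cleanest route is likely to mirror the proof of Lemma \ref{lemma-simplezero}: there the simplicity of zeros of $h_1$ itself was established by showing $h_1'\ne 0$ whenever $h_1=0$, via the reduction \eqref{2derivative-1} to $\tfrac{21i}{4\pi}g_3(12\eta_1^2-g_2)$ and the discriminant contradiction. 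Here the analogous computation carries the extra $g_3$ factor from the $f_C$ structure, but the same discriminant obstruction $g_2^3-27g_3^2\ne 0$ on $\mathbb{H}$ should close the argument. If the direct elimination proves unwieldy, an alternative is to observe that $f_C(\tau)=0$ together with $f_C'(\tau)=0$ would, through \eqref{fC-prime-2} and \eqref{elim}, impose two independent constraints on the single transcendental relation defining $\tau_*$, forcing the vanishing of the discriminant; making this rigorous via the explicit identities is where the care is needed.
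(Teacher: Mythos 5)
Your setup and the elimination of $C$ reproduce the paper's computation exactly: $f_C'=-7h_1+(C-\tau)h_1'$, and at a zero $C-\tau_*=36\pi i g_3(\tau_*)/h_1(\tau_*)$ with $h_1(\tau_*)\neq 0$ (your justification of the latter is fine), giving the relation $-7h_1(\tau_*)^2+36\pi i g_3(\tau_*)h_1'(\tau_*)=0$. But the final step, which you leave as an expectation, does not go the way you predict. Inserting $h_1'=\tfrac{7i}{4\pi}(4\eta_1 g_2^2-36\eta_1^2 g_3-3g_2 g_3)$ and expanding $h_1^2=(g_2^2-18\eta_1 g_3)^2$, every $\eta_1$-term cancels and the left-hand side collapses to $-7\,g_2(\tau_*)\,(g_2^3-27g_3^2)(\tau_*)$ --- there is no factor $12\eta_1^2-g_2$ here (that factor belongs to Lemma \ref{lemma-simplezero}, whose structure is different). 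Since the discriminant never vanishes on $\mathbb{H}$, a double zero therefore forces $g_2(\tau_*)=0$, which is \emph{not} impossible in $F_0$: it happens at $\tau_*=\tfrac12+\tfrac{\sqrt3}{2}i$. The discriminant obstruction alone does not close the argument.

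The missing ingredient is the hypothesis $C\in\mathbb{R}$, which your proposal never invokes. The paper excludes $g_2(\tau)=0$ at the outset: if $g_2(\tau)=0$ then $\tau=\tfrac12+\tfrac{\sqrt3}{2}i$, $\eta_1(\tau)=\tfrac{2\pi}{\sqrt3}$ by \eqref{eta1-rho}, and $f_C(\tau)=0$ forces $C=\tau-\tfrac{2\pi i}{\eta_1(\tau)}=\tfrac12-\tfrac{\sqrt3}{2}i\notin\mathbb{R}$, a contradiction. This step is essential rather than cosmetic: for the non-real value $C=\tfrac12-\tfrac{\sqrt3}{2}i$ one checks directly that $\tau=\tfrac12+\tfrac{\sqrt3}{2}i$ is a genuine double zero of $f_C$, so no argument avoiding the reality of $C$ can succeed. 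Once $g_2(\tau_*)\neq 0$ is secured, your computation does finish, since then $f_C'(\tau_*)=-7g_2(g_2^3-27g_3^2)/h_1\neq 0$ at $\tau_*$.
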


\begin{proof}
Suppose $f_C(\tau)=0$ for some $C\in\mathbb{R}$ and $\tau\in F_0$. Then (\ref{Cphi}) gives
\[C-\tau=\frac{36\pi i g_3(\tau)}{(g_2^2-18\eta_1 g_3)(\tau)} \quad \text{and } (g_2^2-18\eta_1 g_3)(\tau)\neq 0.\]
If $g_2(\tau)=0$, then $\tau=\frac{1}{2}+\frac{\sqrt{3}}{2}i$ and (\ref{eta1-rho}) gives $\eta_1(\tau)=\frac{2\pi}{\sqrt{3}}$. Consequently,
\[C=\tau-\tfrac{2\pi i}{\eta_1(\tau)}=\tfrac{1}{2}-\tfrac{\sqrt{3}}{2}i\notin\mathbb{R},\]
a contradiction. So $g_2(\tau)\neq 0$. Applying (\ref{d-eta1})-(\ref{2derivative}) and (\ref{fC-exp}), we have
\begin{align}\label{fcde}
f_C'(\tau)&=-(g_2^2-18\eta_1 g_3)+(C-\tau)(g_2^2-18\eta_1 g_3)'-36\pi i g_3'\nonumber\\
&=-7(g_2^2-18\eta_1 g_3)-\frac{63  g_3(4\eta_1 g_2^2-36\eta_1^2g_3-3g_2g_3)}{g_2^2-18\eta_1 g_3}\nonumber\\
&=-7\frac{g_2(\tau)(g_2^3-27g_3^2)(\tau)}{(g_2^2-18\eta_1 g_3)(\tau)}\neq 0.
\end{align}
This completes the proof.
\end{proof}

\begin{lemma}
\label{lemma-10}For any $C\in \mathbb{R}$, $f_{C}(\tau
)\not =0$ for $\tau \in \partial F_{0}\cap \mathbb{H}$.
\end{lemma}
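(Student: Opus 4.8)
The plan is to mimic the strategy of Lemma~\ref{lemma-t10}: every point of $\partial F_0\cap\mathbb{H}$ can be pushed, via an element of $SL(2,\mathbb{Z})$ together with a relabelling of the parameter $C$, onto the positive imaginary axis, where the reality of $g_2,g_3,\eta_1$ turns the vanishing of $f_C$ into a transparent obstruction. First I would record the basic computation on the imaginary axis. For $\tau=ib$ with $b>0$, the formulas \eqref{ib-real} give $g_2(ib)>0$ and $g_3(ib),\eta_1(ib)\in\mathbb{R}$, while \eqref{g3eta1} (Lemma~\ref{lemma-etai}) gives $A(b):=(g_2^2-18\eta_1 g_3)(ib)>0$. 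Writing $f_C$ in the form \eqref{fC-exp}, namely $f_C(\tau)=(C-\tau)(g_2^2-18\eta_1 g_3)(\tau)-36\pi i\,g_3(\tau)$, one finds that the real part separates cleanly:
\[
\operatorname{Re} f_C(ib)=C\,A(b),\qquad \operatorname{Im} f_C(ib)=-\big(b\,A(b)+36\pi\, g_3(ib)\big).
\]
Hence for every real $C\neq 0$ the real part $C\,A(b)$ is nonzero and $f_C(ib)\neq0$; this disposes of the segment $\operatorname{Re}\tau=0$ as long as $C\neq 0$.

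Next I would reduce the two remaining pieces of $\partial F_0\cap\mathbb{H}$ to this computation. On the line $\operatorname{Re}\tau=1$, the periodicity $g_k(\tau-1)=g_k(\tau)$, $\eta_1(\tau-1)=\eta_1(\tau)$ together with $\eta_2(\tau-1)=\eta_2(\tau)-\eta_1(\tau)$ (from Legendre's relation) yields the identity $f_C(\tau)=f_{C-1}(\tau-1)$; since $\tau-1\in i\mathbb{R}_{>0}$ and $C-1\neq 0$ when $C\neq1$, the previous step gives $f_C(\tau)\neq0$. On the circular arc $|\tau-\tfrac12|=\tfrac12$ I would use the M\"obius map $\tau':=\tfrac{\tau}{1-\tau}$, which sends the arc onto $i\mathbb{R}_{>0}$ and corresponds to $\bigl(\begin{smallmatrix}1&0\\-1&1\end{smallmatrix}\bigr)\in SL(2,\mathbb{Z})$ with $c\tau+d=1-\tau$. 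Feeding the transformation laws \eqref{II-31-1-00} and \eqref{c-5} into \eqref{fC-exp} and collecting terms, I expect the reparametrized identity
\[
f_{C'}(\tau')=(1-\tau)^7(C'+1)\,f_C(\tau),\qquad C'=\tfrac{C}{1-C},\quad C=\tfrac{C'}{C'+1}.
\]
Since $C'\neq0$ exactly when $C\neq0$, and $C'+1\neq0$ for every such $C$, applying the imaginary-axis computation to $f_{C'}(\tau')$ forces $f_C(\tau)\neq0$ on the arc.

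Finally the degenerate parameters $C\in\{0,1\}$, where the relabelling above breaks down ($C'$ is undefined at $C=1$ and vanishes at $C=0$), I would dispatch directly from the already-proved parts of Theorem~\ref{Unique-pole}: parts (2)--(3) locate the unique zeros $\tau_0,\tau_1$ of $f_0,f_1$ in the \emph{interior} $\mathring{F}_0$, so neither lies on $\partial F_0$. The one genuinely delicate step is the arc case: the main obstacle is to verify the transformation identity and, in particular, to pin down the correct reparametrization $C\mapsto C'=C/(1-C)$ so that $f_C$ transforms into a scalar multiple of $f_{C'}$ after the M\"obius substitution. The appearance of $\eta_2$ through the combination $C\eta_1-\eta_2$ in \eqref{fC-exp} is precisely what dictates this M\"obius action on the parameter, and some care is needed because $(g_2^2-18\eta_1 g_3)$ and the $g_3$-term enter with different automorphy weights. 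Once this identity is in hand, everything reduces to the single sign $A(b)>0$ from Lemma~\ref{lemma-etai}, completing the proof.
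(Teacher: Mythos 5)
Your proposal is correct and follows essentially the same route as the paper: the same three-piece decomposition of $\partial F_0\cap\mathbb{H}$, the same reductions to the imaginary axis via $\tau\mapsto\tau-1$ with $C\mapsto C-1$ and $\tau\mapsto\tau/(1-\tau)$ with $C\mapsto C/(1-C)$ (your factor $(C'+1)=\tfrac{1}{1-C}$ matches the paper's identity $f_{C'}(\tau')=\tfrac{(1-\tau)^7}{1-C}f_C(\tau)$ exactly), the same positivity input from Lemma \ref{lemma-etai}, and the same appeal to Theorem \ref{Unique-pole}(2)--(3) for $C\in\{0,1\}$. The only cosmetic difference is that you phrase the imaginary-axis obstruction via $\operatorname{Re}f_C(ib)=C\,A(b)\neq0$, whereas the paper equivalently notes that $C=\phi(\tau)$ would be purely imaginary.
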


\begin{proof}
Suppose $f_{C}(\tau)=0$ for some $C\in \mathbb{R}$ and $\tau \in \partial F_{0}\cap \mathbb{H}$. It follows from Theorem \ref{Unique-pole} (2)-(3) that $C\in\mathbb{R}\setminus\{0,1\}$.

\textbf{Case 1.} $\tau \in i\mathbb{R}_{>0}$.

Then it follows from (\ref{Cphi}) and (\ref{g3eta1})-(\ref{ib-real}) that
\[C=\tau+\frac{36\pi i g_3(\tau)}{(g_2^2-18\eta_1 g_3)(\tau)}\in i\mathbb{R},\]
a contradiction with our assumption $C\in\mathbb{R}\setminus \{0\}$.

\textbf{Case 2.} $|\tau-\frac{1}{2}|=\frac{1}{2}$.

Then $\tau^{\prime}:=\frac{\tau}{1-\tau}\in i\mathbb{R}_{>0}$. Define
$C^{\prime}:=\frac{C}{1-C}\in \mathbb{R}\setminus\{0\}$. Applying (\ref{c-5}),
a direct computation leads to
\[
f_{C^{\prime}}(\tau^{\prime})=\frac{(1-\tau)^{7}}{1-C}f_{C}(\tau)=0.
\]
Then we obtain a contradiction as Case 1.

\textbf{Case 3.} $\tau \in1+i\mathbb{R}_{>0}$.

Then $\tau^{\prime}:=\tau-1\in i\mathbb{R}_{>0}$. Define $C^{\prime}:=C-1\in\mathbb{R}\setminus\{0\}$. By
using $g_{2}(\tau^{\prime})=g_{2}(\tau)$, $g_3(\tau')=g_3(\tau)$ and%
\begin{equation}
\eta_{1}(\tau^{\prime})=\eta_{1}(\tau),\text{ }\eta_{2}(\tau^{\prime}%
)=\eta_{2}(\tau)-\eta_{1}(\tau), \label{c-6}%
\end{equation}
we easily obtain $f_{C^{\prime}}(\tau^{\prime})=f_{C}(\tau)=0$, again a
contradiction as Case 1.

The proof is complete.
\end{proof}

\begin{lemma}
\label{infinity-behavior}Let $C\in \mathbb{R}$ and $\tau\in F_0$. Then
\begin{equation}\label{infty-be}
\lim_{\tau\to\infty}f_C(\tau)=-\tfrac{32}{3}\pi^7 i;
\end{equation}
\begin{equation}\label{0-be}
\lim_{\tau\to 0}f_C(\tau)= \infty\;\text{for $C\neq 0$};\quad  \lim_{\tau\to 1}f_C(\tau) =\infty \;\text{for $C\neq 1$}.
\end{equation}
\end{lemma}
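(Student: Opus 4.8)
The plan is to establish the three limits separately: the cusp at $\infty$ is handled directly by the $q$-expansions, while the cusps at $0$ and $1$ are reduced to the case $\infty$ through the modular transformation law for $f_C$. I would work throughout from the second expression in (\ref{fC-exp}), namely $f_C(\tau)=(C-\tau)(g_2^2-18\eta_1 g_3)(\tau)-36\pi i\, g_3(\tau)$.

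For the limit at $\infty$, the crucial observation is that the leading ($q^0$) terms of $g_2^2-18\eta_1 g_3$ cancel: from the $q$-expansions (\ref{q-exp-eta1})--(\ref{q-exp-g3}) one has $\bigl(\tfrac{4}{3}\pi^4\bigr)^2=18\cdot\tfrac{1}{3}\pi^2\cdot\tfrac{8}{27}\pi^6=\tfrac{16}{9}\pi^8$, so that $g_2^2-18\eta_1 g_3=O(|q|)$ as $\operatorname{Im}\tau\to+\infty$, where $q=e^{2\pi i\tau}$. Since $\operatorname{Re}\tau$ is bounded on $F_0$, the factor $(C-\tau)$ grows only linearly while $g_2^2-18\eta_1 g_3$ decays exponentially; hence $(C-\tau)(g_2^2-18\eta_1 g_3)\to 0$. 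As $g_3(\tau)\to\tfrac{8}{27}\pi^6$, this yields $f_C(\tau)\to -36\pi i\cdot\tfrac{8}{27}\pi^6=-\tfrac{32}{3}\pi^7 i$, which is (\ref{infty-be}). I would emphasize that this limiting value is \emph{independent of} $C$; that uniformity is precisely what makes the reduction at the finite cusps succeed.

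For the cusp at $0$ I would use the transformation $\hat\tau:=\tfrac{\tau-1}{\tau}=\gamma_2\cdot\tau$ (with $\gamma_2$ as in (\ref{F-de})), under which $\hat\tau\to\infty$ and $\hat\tau\in F_0$ whenever $F_0\ni\tau\to 0$. Applying the modularity (\ref{II-30-00}) together with the relations (\ref{c-c-5}) for $\eta_1,\eta_2$, a direct computation of $f_{C'}(\hat\tau)$ shows that, for $C\neq 0$ and with the choice $C':=\tfrac{C-1}{C}$,
\[
f_{C'}(\hat\tau)=\frac{\tau^7}{C}\,f_C(\tau),\qquad\text{equivalently}\qquad f_C(\tau)=\frac{C}{\tau^7}\,f_{C'}(\hat\tau).
\]
Here the value of $C'$ is forced by matching both the $g_2^2$-coefficient and the $g_3$-coefficient, which turn out to impose the single condition $C'=1-\tfrac{1}{C}$. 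Since $\hat\tau\to\infty$, the first part gives $f_{C'}(\hat\tau)\to -\tfrac{32}{3}\pi^7 i\neq 0$ while $\tau^{-7}\to\infty$, so $f_C(\tau)\to\infty$, which is the first limit in (\ref{0-be}). The cusp at $1$ is entirely analogous, using $\hat\tau:=\tfrac{1}{1-\tau}=\gamma_1\cdot\tau$ and the relations (\ref{cc-c-5}); the same bookkeeping yields, for $C\neq 1$ and $C':=\tfrac{1}{1-C}$,
\[
f_{C'}(\hat\tau)=\frac{(1-\tau)^7}{1-C}\,f_C(\tau),
\]
so that $f_C(\tau)=\tfrac{1-C}{(1-\tau)^7}f_{C'}(\hat\tau)\to\infty$ as $\tau\to 1$.

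The only genuinely delicate point is the cancellation of leading terms at $\infty$: because the $q^0$ coefficient of $g_2^2-18\eta_1 g_3$ vanishes, one cannot simply set $q=0$, and must instead verify that the surviving $O(|q|)$ term, multiplied by the linearly growing factor $(C-\tau)$, still tends to $0$. Everything else is routine: the transformation identities at $0$ and $1$ follow from (\ref{II-30-00}), (\ref{c-c-5}), (\ref{cc-c-5}) by exactly the kind of computation already carried out to obtain $f_{t,1}(\hat\tau)=\tau^7 h_t(\tau)$ and $f_{t,0}(\hat\tau)=(\tau-1)^7 h_t(\tau)$ in Lemma \ref{infinity-behavior-t}, and the conclusion at the two finite cusps then reduces to the already-established behaviour at $\infty$, together with the fact that $\hat\tau$ again lies in $F_0$ and tends to $\infty$.
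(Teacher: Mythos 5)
Your proposal is correct and follows essentially the same route as the paper: the limit at $\infty$ via the cancellation of the constant terms in the $q$-expansion of $g_2^2-18\eta_1 g_3$ (so that the linearly growing factor $C-\tau$ is killed by the exponential decay in $q$, leaving $-36\pi i\,g_3\to-\tfrac{32}{3}\pi^7 i$), and the cusps $0$ and $1$ reduced to the cusp $\infty$ through the identities $f_{(C-1)/C}(\tfrac{\tau-1}{\tau})=\tfrac{\tau^7}{C}f_C(\tau)$ and $f_{1/(1-C)}(\tfrac{1}{1-\tau})=\tfrac{(1-\tau)^7}{1-C}f_C(\tau)$. These are exactly the transformations and the bookkeeping used in the paper's proof, so there is nothing to add.
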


\begin{proof} Denote $q=e^{2\pi i\tau}$ as before.
It follows from the $q$-expansions (\ref{q-exp-eta1})-(\ref{q-exp-g3}) that for $F_0\ni \tau\to\infty$,
\begin{equation}\label{g2g3-infity}g_2(\tau)^2-18\eta_1(\tau) g_3(\tau)=1792\pi^8 q(1+66q)+O(|q|^3).\end{equation}
Inserting (\ref{g2g3-infity}) into (\ref{fC-exp}) and using $\tau q\to 0$, we immediately obtain (\ref{infty-be}) for all $C\in\mathbb{R}$.

The asymptotics at the cusps $\{0,1\}$ can be obtained
via (\ref{infty-be}). First we consider $C\in\mathbb{R}\setminus\{0\}$. Let $\tau^{\prime}:=\frac{\tau-1}{\tau}$ and
$C^{\prime}:=\frac{C-1}{C}\in \mathbb{R}$. Applying (\ref{c-c-5}),
a direct computation leads to%
\[
f_{C^{\prime}}(\tau^{\prime})=\frac{\tau^{7}}{C}f_{C}(\tau).
\]
When $\tau \in F_{0}$ and $\tau \rightarrow0$, we have $\tau'=\frac{\tau-1}{\tau}\in
F_{0}$ and $\tau'\rightarrow \infty$. So
\begin{align}
f_{C}(\tau)=\frac{C}{\tau^{7}}f_{C^{\prime}}(\tau^{\prime})=\frac{C}{\tau^{7}}(-\tfrac{32}{3}\pi^7 i+o(1))\quad\text{as $\tau\to 0$}.\label{ayp-8}%
\end{align}
This proves the first formula in (\ref{0-be}).

Finally we consider $C\in\mathbb{R}\setminus\{1\}$. Let $\tau^{\prime}:=\frac{1}{1-\tau}$ and
$C^{\prime}:=\frac{1}{1-C}\in \mathbb{R}$. Recalling (\ref{cc-c-5}),
a direct computation leads to
\begin{equation}\label{eqw}
f_{C^{\prime}}(\tau^{\prime})=f_{\frac{1}{1-C}}(\tfrac{1}{1-\tau})=\frac{(1-\tau)^{7}}{1-C}f_{C}(\tau).
\end{equation}
When $\tau \in F_{0}$ and $\tau \rightarrow1$, we have $\tau'=\frac{1}{1-\tau}\in
F_{0}$ and $\tau'\rightarrow \infty$.
So
\begin{align*}
f_{C}(\tau)=\frac{1-C}{(1-\tau)^{7}}f_{C^{\prime}}(\tau^{\prime})=\frac{1-C}{(1-\tau)^7}(-\tfrac{32}{3}\pi^7 i+o(1))\quad\text{as $\tau\to 1$}.
\end{align*}
This proves the second formula in (\ref{0-be}).
\end{proof}

Define
\begin{equation}\label{R-divide}\Gamma_1:=(-\infty, 0),\quad \Gamma_2:=(0,1),\quad \Gamma_3:=(1,+\infty).\end{equation}

\begin{lemma}
\label{yl-1}Fix $k\in \{1,2,3\}$. Then the number of zeros (up to multiplicity) of $f_C(\tau)$ in $F_{0}$ is a constant for $C\in \Gamma_{k}$.
\end{lemma}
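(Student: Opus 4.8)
The plan is to mirror the argument-principle continuity method already used in the proof of Lemma~\ref{lemma-ht}, now applied to the one-parameter family $f_C(\tau)$ with the real parameter $C$ confined to a single interval $\Gamma_k$. The reason the real line must be broken precisely at $C=0$ and $C=1$ is that the cusp asymptotics of Lemma~\ref{infinity-behavior} degenerate there: the growth at the cusp $0$ is guaranteed only for $C\neq 0$, and the growth at the cusp $1$ only for $C\neq 1$. As long as $C$ stays inside one $\Gamma_k$, all three cusps are handled simultaneously.

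First I would fix $k$ and an arbitrary compact subinterval $[C_1,C_2]\subset\Gamma_k$, and build a compact exhaustion of $F_0$ by truncating its three cusps. Concretely, cut the cusp at $\infty$ by the horizontal segment $\{\operatorname{Im}\tau=R\}\cap F_0$ for large $R$, and cut the cusps at $0$ and $1$ by transporting this truncation through the maps $\tau\mapsto\frac{\tau-1}{\tau}$ and $\tau\mapsto\frac{1}{1-\tau}$ that appear in Lemma~\ref{infinity-behavior}; this yields a compact region $F_0^{R}\subset F_0$ whose boundary consists of part of $\partial F_0\cap\mathbb{H}$ together with three truncating arcs. On $\partial F_0\cap\mathbb{H}$ we already have $f_C\neq 0$ for every $C\in\mathbb{R}$ by Lemma~\ref{lemma-10}. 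On the truncating arcs, Lemma~\ref{infinity-behavior} gives, for $C\notin\{0,1\}$, that $f_C$ tends to the nonzero constant $-\tfrac{32}{3}\pi^7 i$ at the cusp $\infty$ and tends to $\infty$ at the cusps $0$ and $1$; since these limits are uniform for $C$ in the compact set $[C_1,C_2]$, one may choose a single $R$ so large that $f_C$ is bounded away from $0$ on all of $\partial F_0^{R}$, simultaneously for all $C\in[C_1,C_2]$.

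With such a zero-free contour in hand, the argument principle expresses the number of zeros of $f_C$ in $F_0^{R}$, counted with multiplicity, as
\[
N(C)=\frac{1}{2\pi i}\oint_{\partial F_0^{R}}\frac{f_C'(\tau)}{f_C(\tau)}\,d\tau.
\]
Because $f_C$ and $f_C'$ depend analytically on $C$ and $|f_C|$ stays uniformly bounded away from $0$ on $\partial F_0^{R}$, the integrand is continuous in $C$ on $[C_1,C_2]$, so $N(C)$ is a continuous, integer-valued function, hence constant on $[C_1,C_2]$. The same uniform lower bound for $|f_C|$ on the truncated cusp regions (again from Lemma~\ref{infinity-behavior}) shows that for $R$ large no zero of $f_C$ can lie in $F_0\setminus F_0^{R}$, so $N(C)$ is in fact the total number of zeros of $f_C$ in the full domain $F_0$. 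Since $[C_1,C_2]\subset\Gamma_k$ was arbitrary and $\Gamma_k$ is connected, the zero count is constant throughout $\Gamma_k$.

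The genuine obstacle is the uniformity of the cusp estimates over the compact subinterval $[C_1,C_2]$: one must produce a single truncation level $R$ that both keeps the contour zero-free and traps every zero of $f_C$ inside $F_0^{R}$ for all $C$ in the subinterval. This requires upgrading the pointwise limits of Lemma~\ref{infinity-behavior} to bounds uniform in $C$, which is exactly where the transformation identities $f_{C'}(\tau')=\frac{\tau^{7}}{C}f_C(\tau)$ (with $C'=\frac{C-1}{C}$) and $f_{C'}(\tau')=\frac{(1-\tau)^{7}}{1-C}f_C(\tau)$ (with $C'=\frac{1}{1-C}$) are essential: they convert the behavior at the cusps $0$ and $1$ into the already-understood behavior at $\infty$, whose leading term is the $C$-independent constant $-\tfrac{32}{3}\pi^7 i$, and the Möbius image $C'$ of $C$ stays in a compact set avoiding the degenerate values as long as $C$ does.
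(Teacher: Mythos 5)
Your proposal is correct and follows essentially the same route as the paper, which likewise combines Lemma~\ref{lemma-10} (non-vanishing on $\partial F_0\cap\mathbb{H}$) with the cusp asymptotics of Lemma~\ref{infinity-behavior} and invokes the argument principle as in Lemma~\ref{lemma-ht}. You simply spell out the details the paper leaves implicit, namely the cusp truncation and the uniformity of the estimates over compact subintervals of $\Gamma_k$.
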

\begin{proof}
Since $C\in \Gamma_{k}$, Lemma \ref{infinity-behavior} implies
\[
f_C(\tau)\not \rightarrow 0\;\text{ as }\;F_{0}\ni \tau \rightarrow
\infty,0,1\;\text{respectively}.
\]
Together with Lemma \ref{lemma-10} that $f_C
(\tau)\not =0$ on $\partial F_{0}\cap \mathbb{H}$, we conclude as Lemma \ref{lemma-ht} that the number of zeros of $f_C
(\tau)$ in $F_{0}$ is a constant for $C\in \Gamma_{k}$.
\end{proof}

Now we are in the position to prove Theorem \ref{Unique-pole}-(1) and hence completes the proof of Theorem \ref{Unique-pole}.

\begin{proof}[Proof of Theorem \ref{Unique-pole}-(1)] Recall the meromorphic function $\phi(\tau)$ in (\ref{Cphi}) and the fact that $f_{C}(\tau)=0$ is equivalent to $\phi(\tau)=C$. Let $C\in\mathbb{R}\setminus\{0,1\}$.

{\bf Step 1.} We prove that in $F_0$, for $|C|>1$ large enough, $f_C(\tau)$ has a unique zero $\tau_1(C)$ near $\infty$, i.e. satisfies $\tau_1(C)\to \infty$ as $|C|\to +\infty$.

By using the $q$-expansions (\ref{q-exp-eta1})-(\ref{q-exp-g3}) and (\ref{g2g3-infity}), we obtain for $F_0\ni \tau=a+bi\to\infty$ that
\begin{align}\label{phi-infty}
\phi(\tau)  &  =\tau+\frac{i}{168\pi}q^{-1}-\frac{95i}{28\pi}+O(|q|)\nonumber\\
&  =a+\frac{\sin2\pi a}{168\pi}e^{2\pi b}+i\bigg(  b+\frac{\cos2\pi a}{168\pi
}e^{2\pi b}-\frac{95}{28\pi}\bigg)  +O(e^{-2\pi b}).
\end{align}
Therefore, when $C\rightarrow\pm\infty$, it is easy to
prove the existence of \emph{a unique $\tau_{1}(C)=a_{1}(C)+ib_{1}(C)\in F_{0}$}
such that $C=\phi(\tau_{1}(C))$ and $\tau_1(C)\to \infty$ as $|C|\to +\infty$. Furthermore,
\begin{equation}\label{tau1}
b_{1}(C)\rightarrow+\infty,\text{ }a_{1}(C)\rightarrow \left \{
\begin{array}
[c]{c}%
1/4+\text{ \ if }C\rightarrow+\infty,\\
3/4-\text{ \ if }C\rightarrow-\infty.
\end{array}
\right.
\end{equation}
This shows that $\tau_1(C)$ is the unique zero of $f_C(\tau)$ in $F_0$ that is near $\infty$ for $|C|>1$ large enough.

{\bf Step 2.} We prove that
\begin{equation}\label{phi-01}\lim_{F_0\ni \tau\to \infty}\phi(\tau)=\infty,\quad\lim_{F_0\ni \tau\to 0}\phi(\tau)=0,\quad \lim_{F_0\ni \tau\to 1}\phi(\tau)=1.\end{equation}
Consequently, for $|C|>1$ large enough, $f_C(\tau)$ has no zeros in $F_0$ which are near the cusps $\{0,1\}$.

Note that $\lim_{F_0\ni \tau\to \infty}\phi(\tau)=\infty$ is just (\ref{phi-infty}).
When $\tau \in F_{0}$ and $\tau \rightarrow0$, we have $\tau':=\frac{\tau-1}{\tau}\in
F_{0}$ and $\tau'\rightarrow \infty$. Applying the first equality of (\ref{ayp-8-t}) and (\ref{infty-be}) (note $f_{1,C}=f_{C}$) leads to
\[
\lim_{\tau\to 0}\phi(\tau)=\lim_{\tau\to 0}\frac{36\pi i g_3(\tau)}{h_1(\tau)}
=\lim_{\tau\to 0}\frac{36\pi i \tau^{-6}g_3(\tau')}{\tau^{-7}f_{1}(\tau')}=0.
\]
When $\tau \in F_{0}$ and $\tau \rightarrow1$, we have $\tau'=\frac{1}{1-\tau}\in
F_{0}$ and $\tau'\rightarrow \infty$. Similarly, it follows from the first equality of (\ref{eq-h-t}) that
\begin{align*}
\lim_{\tau\to 1}\phi(\tau)
=1+\lim_{\tau\to 1}\frac{36\pi i (1-\tau)^{-6}g_3(\tau')}{(\tau-1)^{-7}f_0(\tau')}=1.
\end{align*}

{\bf Step 3.} We prove that in $F_0$, for $|C|>1$ large enough, $f_C(\tau)$ has a unique zero $\tau_2(C)$ that are uniformly bounded away from the cusps $\{0,1,\infty\}$ as $|C|\to +\infty$. Furthermore,
\begin{equation}
\lim_{|C|\to +\infty}\tau_2(C)=\tau_{\infty}.
\end{equation}

Recall Theorem \ref{uniquezero} that in $F_0$, $g_2^2-18\eta_1g_3$ has a unique and simple zero $\tau_{\infty}=\frac{1}{2}+ib_{\infty}\in \mathring{F}_{0}$. Clearly $g_3(\tau_{\infty})\neq 0$.
Then it follows from
\[C=\phi(\tau)=\tau+\frac{36\pi i g_3(\tau)}{(g_2^2-18\eta_1 g_3)(\tau)}\Leftrightarrow f_C(\tau)=0\]
that $f_C(\tau)$ has a unique simple zero $\tau_2(C)$ near $\tau_{\infty}$ for $|C|>1$ large enough.

Suppose that as $|C|\to +\infty$, $\tau(C)\in F_0$ is any zero of $f_C(\tau)$ that are uniformly bounded away from the cusps $\{0,1,\infty\}$. We may assume $\tau(C)\to \tau(\infty)\in F_0$ up to a subsequence. Then $C=\phi(\tau(C))$ implies $\infty=\phi(\tau(\infty))$, i.e. $(g_2^2-18\eta_1g_3)(\tau(\infty))=0$. It follows from Theorem \ref{uniquezero} that $\tau(\infty)=\tau_{\infty}$ and so $\tau(C)=\tau_2(C)$ for $|C|$ large enough. This proves Step 3.

{\bf Step 4.} We complete the proof.

Steps 1-3 show that for $|C|>1$ large enough, $f_C(\tau)$ has exactly two zeros $\tau_1(C)$ and $\tau_2(C)$ in $F_0$, one of which is near $\infty$ and the other is near $\tau_{\infty}$.
Together with Lemmas \ref{simplezero-F0} and \ref{yl-1}, we conclude that $f_{C}(\tau)$ has exactly two different zeros in $F_0$ for $C\in (1,+\infty)\cup (-\infty, 0)$. Finally, since $\frac{1}{1-\tau}\in F_0$ if and only if $\tau\in F_0$, it follows from (\ref{eqw}) that $f_{C}(\tau)$ has also exactly two different zeros in $F_0$ for $C\in (0,1)$. This completes the proof.
\end{proof}

Now we apply Theorem \ref{Unique-pole} to prove our first main result Theorem \ref{thm-number}.

\begin{proof}[Proof of Theorem \ref{thm-number}]
Theorem \ref{thm-number}-(1) is just Theorem \ref{uniquezero}.
So it suffices to consider $c\neq 0$.
Given $\gamma=
\bigl(\begin{smallmatrix}a & b\\
c & d\end{smallmatrix}\bigr)
\in \Gamma_{0}(2)/\{\pm I_2\}$ with $c\not =0$. Write $\tau^{\prime}%
=\gamma \cdot \tau=\frac{a\tau+b}{c\tau+d}$ with $\tau \in F_{0}$. By using $g_{3}(\tau^{\prime})=(c\tau+d)^{6}g_{3}(\tau)$ and
\begin{equation}
\eta_{1}(\tau^{\prime})=(c\tau+d)(c\eta_{2}(\tau)+d\eta_{1}(\tau)),\text{
\ }g_{2}(\tau^{\prime})=(c\tau+d)^{4}g_{2}(\tau), \label{c-38}%
\end{equation}
we have%
\begin{align}\label{g3-fC}
&g_2(\tau')^2-18\eta_{1}(\tau')g_{3}(\tau') \\
=&-c(c\tau
+d)^{7}\left[ g_{2}(\tau)(-\tfrac{d}%
{c}-\tau)-18(-\tfrac{d}{c}\eta_{1}(\tau)-\eta_{2}(\tau))g_3(\tau)\right] \nonumber\\
=&-c(c\tau+d)^{7}f_{\frac{-d}{c}}(\tau).\nonumber
\end{align}
Since $\frac{-d}{c}\in \mathbb{Q}\setminus\{0,1\}$, Theorem \ref{Unique-pole} shows that $f_{\frac{-d}{c}}(\tau)$ has exactly two different zeros, denoted by $\tau_1(\frac{-d}{c})$ and $\tau_2(\frac{-d}{c})$, in $F_0$.
Consequently, we conclude from (\ref{g3-fC}) that $g_2^2-18\eta_1g_3$ has exactly two different zeros $\frac{a\tau_1(\frac{-d}{c})+b}{c\tau_1(\frac{-d}{c})+d}$ and $\frac{a\tau_2(\frac{-d}{c})+b}{c\tau_2(\frac{-d}{c})+d}$ in $\gamma(F_0)$.
The proof is complete.
\end{proof}

\section{The three smooth curves}

\label{smoothcurves}

This section is devoted to proving Theorem \ref{Location} concerning the distribution of the critical points. For this purpose, we need to give the precise definition of the three curves in Theorem \ref{Location}. First we study some basic properties of the zeros of $f_{C}(\tau)$.

\begin{lemma}\label{line12}For any $C\in\mathbb{R}$, $f_C(\tau)\neq 0$ for any $\tau=\frac{1}{2}+ib$ with $b\geq \frac{1}{2}$.
\end{lemma}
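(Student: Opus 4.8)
The plan is to use the reality of $g_2,g_3,\eta_1$ on the line $\operatorname{Re}\tau=\tfrac12$ to split $f_C$ into real and imaginary parts, thereby reducing the statement to the positivity of a single real function of $b$, and then to control that function through the sign of its derivative at a hypothetical zero.

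First I would record (as in the proof of Lemma~\ref{lemma-ht}) that $g_2,g_3,\eta_1\in\mathbb{R}$ for $\tau=\tfrac12+ib$, $b>0$. Using the second expression in \eqref{fC-exp} and separating real and imaginary parts gives, for $C\in\mathbb{R}$,
\[
\operatorname{Re}f_C(\tfrac12+ib)=(C-\tfrac12)\,(g_2^2-18\eta_1g_3),\qquad
\operatorname{Im}f_C(\tfrac12+ib)=-\psi(b),
\]
where $\psi(b):=b\,(g_2^2-18\eta_1g_3)(\tfrac12+ib)+36\pi\,g_3(\tfrac12+ib)$. At $b=b_\infty$ the factor $(g_2^2-18\eta_1g_3)(\tau_\infty)$ vanishes while $g_3(\tau_\infty)\neq0$, so $f_C(\tau_\infty)=-36\pi i\,g_3(\tau_\infty)\neq0$; and whenever $(g_2^2-18\eta_1g_3)(\tfrac12+ib)\neq0$ the real part forces $C=\tfrac12$. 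Hence the lemma reduces to the single claim $\psi(b)\neq0$ for all $b\geq\tfrac12$. For $\tfrac12\leq b\leq b_\infty$ this is immediate: the proof of Theorem~\ref{uniquezero} gives $(g_2^2-18\eta_1g_3)(\tfrac12+ib)\geq0$ there, while $g_3(\tfrac12+ib)\geq0$ vanishing only at $b=\tfrac12$, so $\psi(b)>0$.

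The main obstacle is the range $b>b_\infty$, where $(g_2^2-18\eta_1g_3)(\tfrac12+ib)<0$ but $g_3>0$, so the two terms of $\psi$ compete and a crude estimate cannot settle the sign. Here I would exploit the identity $f_{1/2}(\tfrac12+ib)=-i\psi(b)$ together with $\tfrac{d}{db}=i\tfrac{d}{d\tau}$, which gives $\psi'(b)=-f_{1/2}'(\tfrac12+ib)$; inserting the derivative formula \eqref{fcde} evaluated at a zero yields
\[
\psi'(b)=7\,\frac{g_2\,(g_2^3-27g_3^2)}{g_2^2-18\eta_1g_3}\qquad\text{whenever }\psi(b)=0 .
\]
On the line one has $q=e^{2\pi i\tau}=-e^{-2\pi b}<0$, so the eta-product expansion $g_2^3-27g_3^2=(2\pi)^{12}q\prod_{n\geq1}(1-q^n)^{24}$ shows the discriminant is negative there; combined with $(g_2^2-18\eta_1g_3)<0$ for $b>b_\infty$, this forces $\operatorname{sign}\psi'(b)=\operatorname{sign}g_2(\tfrac12+ib)$ at every zero of $\psi$. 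Since $g_2$ has a unique (elliptic) zero $e^{i\pi/3}=\tfrac12+\tfrac{\sqrt3}{2}i$ in $F_0$, its sign is constant on each of the two subintervals $(b_\infty,\tfrac{\sqrt3}{2})$ and $(\tfrac{\sqrt3}{2},\infty)$; thus on each of them every zero of $\psi$ is a transversal crossing in one fixed direction.

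I would finish with an elementary crossing argument on each subinterval. We have $\psi(b_\infty)=36\pi g_3(\tau_\infty)>0$, $\psi(\tfrac{\sqrt3}{2})=18\pi g_3(\tfrac12+\tfrac{\sqrt3}{2}i)>0$ (using $g_2=0$ and $\eta_1=\tfrac{2\pi}{\sqrt3}$ there by \eqref{eta1-rho}), and $\psi(b)\to\tfrac{32}{3}\pi^7>0$ as $b\to\infty$ by \eqref{infty-be}. On an interval whose endpoints carry $\psi>0$ and on which all zeros are crossings of one fixed sign, no zero can occur: if every zero were an up-crossing then $\psi$ would have to be negative just before its first zero, and if every zero were a down-crossing then $\psi$ would be negative just after its last zero, in either case contradicting the endpoint positivity. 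Therefore $\psi>0$ throughout $[\tfrac12,\infty)$, and the lemma follows. The genuinely delicate points are the sign computation for $\psi'$ at a zero and the negativity of the discriminant on the line; the remainder is bookkeeping with facts already established for $f_C$ and for the critical line.
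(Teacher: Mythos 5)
Your proof is correct and follows essentially the same route as the paper's: both restrict to the line $\operatorname{Re}\tau=\tfrac12$ where $g_2,g_3,\eta_1$ are real, reduce the lemma to the non-vanishing of a single real function of $b$ (your $\psi(b)=-\operatorname{Im}f_{1/2}(\tfrac12+ib)$ equals $h_1\cdot\varphi$ for the paper's $\varphi(b)=\operatorname{Im}\phi(\tfrac12+ib)$), and handle the delicate range $b>b_\infty$ via the same derivative identity $7\,g_2(g_2^3-27g_3^2)/h_1^{k}$ combined with the negativity of the discriminant on the line and the fact that $g_2$ changes sign only at $b=\tfrac{\sqrt3}{2}$. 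The only divergence is the endgame --- the paper proves $\varphi$ is monotone on each of $(b_\infty,\tfrac{\sqrt3}{2})$ and $(\tfrac{\sqrt3}{2},\infty)$ and computes $\max_{b>b_\infty}\varphi=-\tfrac{\sqrt3}{2}<0$, whereas you run a transversal-crossing parity argument for $\psi$ against the positive values $\psi(b_\infty)$, $\psi(\tfrac{\sqrt3}{2})$ and $\lim_{b\to\infty}\psi$ --- and both versions are valid.
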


\begin{proof} Define a function $\varphi: [\frac{1}{2}, +\infty)\to \mathbb{R}\cup \{\infty\}$ by
\[\varphi(b):=b+\frac{36\pi g_3(\frac{1}{2}+ib)}{(g_2^2-18\eta_1g_3)(\frac{1}{2}+ib)}.\]
We only need to prove
\begin{equation}\label{varphib}
\varphi(b)\neq 0\quad \text{for any } b\geq \tfrac{1}{2}.
\end{equation}
Once (\ref{varphib}) holds, we have $\phi(\frac{1}{2}+ib)=\frac{1}{2}+i\varphi(b)\notin \mathbb{R}$ and so $f_C(\frac{1}{2}+i b)\neq 0$ for all $C\in\mathbb{R}$ and $b\geq \frac{1}{2}$.

It is well known that $g_3(\frac{1}{2}+\frac{1}{2}i)=0$ and $g_3(\frac{1}{2}+ib)>0$ for all $b>\frac{1}{2}$.
Recalling Theorem \ref{uniquezero} that $b_{\infty}\in (\frac{1}{2},\frac{\sqrt{3}}{2})$ is the unique zero of $(g_2^2-18\eta_1g_3) (\frac{1}{2}+ib)$ on $[\frac{1}{2}, +\infty)$ and
\[(g_2^2-18\eta_1g_3)(\tfrac{1}{2}+ib)\begin{cases} >0\quad\text{if}\; b\in [\frac{1}{2}, b_{\infty}),\\
<0\quad \text{if}\; b>b_{\infty},
\end{cases}\]
we have $\varphi(b)>0$ for $b\in [\frac{1}{2}, b_{\infty})$, $\varphi(b_{\infty})=\infty$ and $\varphi(b)<0$ for $b-b_{\infty}>0$ small.
For $b>b_{\infty}$, we easily derive from (\ref{d-eta1})-(\ref{2derivative}) that (similar as (\ref{fcde}))
\begin{align*}
\varphi'(b)&=1+36\pi\frac{(g_2^2-18\eta_1 g_3)\frac{d}{db}g_3-g_3\frac{d}{db}(g_2^2-18\eta_1 g_3)}{(g_2^2-18\eta_1 g_3)^2}(\tfrac{1}{2}+ib)\\
&=7\frac{g_2(g_2^3-27g_3^2)}{(g_2^2-18\eta_1 g_3)^2}(\tfrac{1}{2}+ib).
\end{align*}
Together with $(g_2^3-27g_3^2)(\tfrac{1}{2}+ib)<0$, $g_2(\tfrac{1}{2}+\frac{\sqrt{3}}{2}i)=0$ and
\[g_2(\tfrac{1}{2}+ib)\begin{cases}<0 \;\text{if }b\in (b_\infty, \tfrac{\sqrt{3}}{2}),\\
>0\;\text{if }b>\tfrac{\sqrt{3}}{2},\end{cases}\]
we conclude that
\[\max_{b>b_{\infty}}\varphi(b)=\varphi(\tfrac{\sqrt{3}}{2})
=\tfrac{\sqrt{3}}{2}-\tfrac{2\pi}{\eta_1(\frac{1}{2}+\frac{\sqrt{3}}{2}i)}=-\tfrac{\sqrt{3}}{2},\]
where (\ref{eta1-rho}) is used.
This proves (\ref{varphib}).
\end{proof}

Motivated by Lemma \ref{line12}, we define
\[F_0^{<}:=\{\tau\in \mathring{F}_0\,|\,\operatorname{Re}{\tau}<\tfrac{1}{2}\},\;
F_0^{>}:=\{\tau\in \mathring{F}_0\,|\,\operatorname{Re}{\tau}>\tfrac{1}{2}\}.\]

\begin{lemma}\label{lemma-1-1}
For any $C\in\mathbb{R}\setminus\{0,1\}$, $f_{C}(\tau)$ has exactly one zero in each of $F_0^{<}$ and $F_0^{>}$.
\end{lemma}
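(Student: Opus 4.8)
The plan is to prove a stronger statement than required, namely that as $\tau$ traverses the left half $F_0^<$ versus the right half $F_0^>$, the symmetry $\tau \mapsto 1-\bar\tau$ interchanges the two halves and conjugates the function $f_C$ in a controlled way, allowing me to count zeros on one side and transfer the count. First I would establish the key symmetry: since $g_2, g_3, \eta_1$ all have real $q$-expansion coefficients, the Schwarz reflection $g_k(-\bar\tau) = \overline{g_k(\tau)}$ holds, and combined with the translation invariance $g_k(\tau+1)=g_k(\tau)$ this yields $g_k(1-\bar\tau) = \overline{g_k(\tau)}$ for $k=2,3$ and $\eta_1(1-\bar\tau)=\overline{\eta_1(\tau)}$. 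The reflection $\tau \mapsto 1-\bar\tau$ is an orientation-reversing involution of $F_0$ that fixes the line $\operatorname{Re}\tau=\tfrac12$ pointwise and swaps $F_0^<$ with $F_0^>$.

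Next I would compute how $f_C$ transforms under this reflection. Using $\eta_2 = \tau\eta_1 - 2\pi i$ together with the reflection formulas above, a direct calculation (which I would present compactly rather than grind out) should give a relation of the shape $f_C(1-\bar\tau) = \overline{f_{1-C}(\tau)}$ or a similar clean identity relating $f_C$ on one half to $\overline{f_{C'}}$ on the other for an appropriate real $C'$; the exact value of $C'$ is forced by matching the $(C-\tau)g_2^2$ and $-36\pi i g_3$ terms. Because $f_C(\tau)=0$ iff $\phi(\tau)=C\in\mathbb{R}$, and because Lemma \ref{line12} already guarantees no zeros sit on the dividing line $\operatorname{Re}\tau=\tfrac12$, the total of exactly two zeros in $\mathring F_0$ (Theorem \ref{Unique-pole}-(1)) splits strictly between $F_0^<$ and $F_0^>$ with none on the boundary line.

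With the symmetry in hand, the counting argument runs as follows. Theorem \ref{Unique-pole}-(1) gives exactly two simple zeros in $\mathring F_0$ for every $C\in\mathbb{R}\setminus\{0,1\}$, and Lemma \ref{line12} removes the line $\operatorname{Re}\tau=\tfrac12$ as a possible location, so the two zeros lie in the open disjoint union $F_0^< \sqcup F_0^>$. It then suffices to rule out both zeros lying in the same half. I would use the reflection identity to show that the zero set in $F_0^>$ for parameter $C$ is the mirror image of the zero set in $F_0^<$ for parameter $C'$; tracking this across the argument-principle count of Lemma \ref{yl-1}, which holds that the number of zeros is locally constant on each interval $\Gamma_k$, I can pin the split as $(1,1)$ by evaluating at a single convenient $C$ in each interval. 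The cleanest anchor is the large-$|C|$ regime already analyzed in Step 1 of the proof of Theorem \ref{Unique-pole}-(1): there $\tau_1(C)\to \tfrac14+i\infty$ (so $\operatorname{Re}\tau_1 \to \tfrac14 < \tfrac12$, hence $\tau_1\in F_0^<$) for $C\to+\infty$ while $\tau_2(C)\to\tau_\infty$ approaches the dividing line from one definite side, placing it in the opposite half.

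The main obstacle I anticipate is confirming that $\tau_2(C)$ genuinely lies in $F_0^>$ rather than straddling the line, since $\tau_2(C)\to\tau_\infty\in\{\operatorname{Re}\tau=\tfrac12\}$ and the limit itself sits on the forbidden line; I must extract the sign of $\operatorname{Re}\tau_2(C)-\tfrac12$ for large finite $|C|$ from a first-order perturbation of $\phi$ near $\tau_\infty$, using that $\tau_\infty$ is a simple zero of $g_2^2-18\eta_1 g_3$ so that $\phi$ has a simple pole there with a computable residue direction. Once the sign is fixed for one large $C$ in each $\Gamma_k$, constancy of the split along $\Gamma_k$ (a consequence of the zeros being simple, continuous in $C$, and never crossing the line by Lemma \ref{line12}) propagates the $(1,1)$ distribution to all of $\mathbb{R}\setminus\{0,1\}$, completing the proof.
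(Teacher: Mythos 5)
Your plan is sound and, once completed, does prove the lemma, but it anchors the count in a genuinely different place than the paper does. The paper never goes near $C=\pm\infty$: for each $\Gamma_k$ it sends $C$ to the finite endpoint $0$ or $1$, where by Theorem \ref{Unique-pole}-(2),(3) and (\ref{phi-01}) the two zeros converge to $\{\tau_0,0\}$ or $\{\tau_1,1\}$; since $\tau_0\in F_0^{>}$ and $\tau_1\in F_0^{<}$ lie \emph{strictly} off the line $\operatorname{Re}\tau=\frac12$ while the cusps $0$ and $1$ lie in the closure of only one half, the $(1,1)$ split is read off at once, with no reflection symmetry and no residue computation. Your anchor at $|C|\to\infty$ is workable but costlier, because the second zero $\tau_2(C)\to\tau_\infty$ lands \emph{on} the dividing line, exactly as you anticipate. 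The computation you defer does close: writing $\phi(\tau)\approx\tau_\infty+B/(\tau-\tau_\infty)$ with $B=36\pi i\,g_3(\tau_\infty)/(g_2^2-18\eta_1g_3)'(\tau_\infty)$, formula (\ref{2derivative-1}) gives $(g_2^2-18\eta_1g_3)'(\tau_\infty)=\frac{21i}{4\pi}g_3(12\eta_1^2-g_2)(\tau_\infty)$, so $B=\frac{48\pi^2}{7(12\eta_1^2-g_2)(\tau_\infty)}$ is real and positive (the proof of Theorem \ref{uniquezero} records $12\eta_1^2-g_2>0$ on $\operatorname{Re}\tau=\frac12$), whence $\operatorname{Re}\tau_2(C)-\frac12\approx B/C$ has the sign of $C$ and $\tau_2(C)\in F_0^{>}$ as $C\to+\infty$, $\in F_0^{<}$ as $C\to-\infty$. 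Two points you must still make explicit. First, a large-$|C|$ anchor does not exist on the bounded interval $\Gamma_2=(0,1)$; there you need the self-pairing of $\Gamma_2$ under $C\mapsto 1-C$ supplied by your reflection identity, which together with constancy of the split forces $a=2-a=1$ (note that for $\Gamma_1$ and $\Gamma_3$ the symmetry only says the two splits are mirror images of each other, so the $|C|\to\infty$ anchor is genuinely needed there). Second, the constancy of the split along $\Gamma_k$ is not Lemma \ref{yl-1} itself, which counts total zeros; it follows, as you say at the end, from simplicity of the zeros, the implicit function theorem, and Lemmas \ref{lemma-10} and \ref{line12}. With these filled in your argument is complete; the paper's choice of anchor simply buys a shorter proof by exploiting the explicitly known zeros $\tau_0$ and $\tau_1$ of $f_0$ and $f_1$.
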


\begin{proof} Recall (\ref{R-divide}) that
\[\Gamma_1=(-\infty, 0),\quad \Gamma_2=(0,1),\quad \Gamma_3=(1,+\infty).\]
Fix $k\in \{1,2,3\}$.
For any $C\in \Gamma_k$, Theorem \ref{Unique-pole}-(1) shows that $f_C(\tau)$ has exactly two different zeros $\tau_1(C)$ and $\tau_2(C)$ in $F_0$, which are both simple and locate in $F_0^{<}\cup F_0^{>}$ by applying Lemmas \ref{simplezero-F0}-\ref{lemma-10} and \ref{line12}. By the implicit function theorem and renaming $\tau_1(C)$ and $\tau_2(C)$ if necessary, we obtain
\begin{align}\label{fc4-2}&\text{\it two smooth functions}\;\Gamma_k\ni C \mapsto \tau_j(C)\in F_0^{<}\cup F_0^{>},\; j=1,2, \\ &\text{\it the images of which have no intersections.}\nonumber\end{align}
If $k\in \{1,2\}$, we easily see from Theorem \ref{Unique-pole}-(2), Lemma \ref{simplezero-F0} and (\ref{phi-01}) that
\begin{equation}\label{asy-0-k}
\Big\{\lim_{\Gamma_k\ni C\to 0}\tau_1(C), \lim_{\Gamma_k\ni C\to 0}\tau_2(C)\Big\}
=\{\tau_0, 0\}.
\end{equation}
Since $\tau_0\in F_0^{>}$ and $0\in \partial F_0^{<}$, we conclude that exact one of $\{\tau_1(C), \tau_2(C)\}$ belongs to $F_0^{<}$ (resp. $F_0^{>}$) for any $C\in \Gamma_k$.

Similarly, if $k\in \{2,3\}$, we deduce from Theorem \ref{Unique-pole}-(3), Lemma \ref{simplezero-F0} and (\ref{phi-01}) that
\begin{equation}\label{asy-1-k}
\Big\{\lim_{\Gamma_k\ni C\to 1}\tau_1(C), \lim_{\Gamma_k\ni C\to 1}\tau_2(C)\Big\}
=\{\tau_1, 1\}.
\end{equation}
Since $\tau_1\in F_0^{<}$ and $1\in \partial F_0^{>}$, we conclude that exact one of $\{\tau_1(C), \tau_2(C)\}$ belongs to $F_0^{<}$ (resp. $F_0^{>}$) for any $C\in \Gamma_k$. This completes the proof.
\end{proof}

Lemma \ref{lemma-1-1} leads us to give the following notations.

\medskip

\noindent{\bf Notations:} \emph{For any $C\in\mathbb{R}\setminus\{0,1\}$, we denote by $\tau_{<}(C)$ (resp. $\tau_{>}(C)$) to be the unique zero of $f_{C}(\tau)$ in $F_0^{<}$ (resp. in $F_0^{>}$)}.

\medskip

Since \[
\mathbb{H=}\bigcup_{\gamma \in \Gamma_{0}(2)/\{ \pm I_{2}\}}\gamma (F_{0}),
\] we can restate Theorem \ref{thm-number} as follows.

\begin{theorem}
\label{thm-number-1} Let $\gamma (F_{0})$ be a fundamental domain of $\Gamma_{0}(2)$ with $\gamma=
\bigl(\begin{smallmatrix}a & b\\
c & d\end{smallmatrix}\bigr)
\in \Gamma_{0}(2)/\{ \pm I_{2}\}$. Then
the following statements hold:

\begin{itemize}
\item[(1)] If $c=0$, i.e. $\gamma(F_0)=F_0+m$ with $m=\frac{b}{d}\in \mathbb{Z}$, then $\tau_{\infty}+m$ is the unique zero of $E_6^{\prime}(\tau)$ in $F_{0}+m$.

\item[(2)] If $c\not =0$, then $E_6^{\prime}(\tau)$ has exactly two different zeros $\frac{a\tau_<
(-d/c)+b}{c\tau_<(-d/c)+d}$ and $\frac{a\tau_>
(-d/c)+b}{c\tau_>(-d/c)+d}$ in
the fundamental domain $\gamma(F_{0})$ of $\Gamma_{0}(2)$. In particular,%
\begin{align*}
&(\tau_{\infty}+\mathbb{Z})\cup\\
&\left \{  \left.  \frac{a\tau_{<}(\tfrac{-d}{c})+b}{c\tau_{<}(\tfrac{-d}{c}%
)+d}, \frac{a\tau_{>}(\tfrac{-d}{c})+b}{c\tau_{>}(\tfrac{-d}{c}%
)+d}\right \vert
\bigl(\begin{smallmatrix}a & b\\
c & d\end{smallmatrix}\bigr)
\in \Gamma_{0}(2)/\{ \pm I_{2}\},\,c\not =0\right \}
\end{align*}
gives rise to all the zeros of $E_6^{\prime}(\tau)$ in $\mathbb{H}$.
\end{itemize}
\end{theorem}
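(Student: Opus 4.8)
The plan is to obtain Theorem~\ref{thm-number-1} as a bookkeeping refinement of Theorem~\ref{thm-number}, feeding in the precise labelling of zeros provided by Lemma~\ref{lemma-1-1}. Throughout I use that, by \eqref{d-eta1}, $E_6'$ is a nonzero constant multiple of $g_3'=\tfrac{-i}{6\pi}(g_2^2-18\eta_1 g_3)$, so the zeros of $E_6'$ in $\mathbb{H}$ coincide with those of $g_2^2-18\eta_1 g_3$. Part (1) is then exactly Theorem~\ref{uniquezero}: there $\tau_\infty=\tfrac12+ib_\infty$ with $b_\infty\in(\tfrac12,\tfrac{\sqrt3}2)$ is shown to be the unique zero of $g_2^2-18\eta_1 g_3$ in $F_0$, and the translation invariance $g_3(\tau+1)=g_3(\tau)$, $\eta_1(\tau+1)=\eta_1(\tau)$ transports this to $F_0+m$.

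For part (2), fix $\gamma$ with $c\neq0$. First I would record that the parity conditions defining $\Gamma_0(2)$ force $-d/c\in\mathbb{Q}\setminus\{0,1\}$: since $c$ is even and $ad-bc=1$, the integer $d$ must be odd, whence $d\neq0$ and $d\neq-c$, i.e. $-d/c\notin\{0,1\}$. Next I invoke the identity \eqref{g3-fC}, which gives $(g_2^2-18\eta_1 g_3)(\gamma\cdot\tau)=-c(c\tau+d)^7 f_{-d/c}(\tau)$; as $c\tau+d\neq0$ on $\mathbb{H}$, the zeros of $g_2^2-18\eta_1 g_3$ in $\gamma(F_0)$ are precisely the $\gamma$-images of the zeros of $f_{-d/c}$ in $F_0$. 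By Lemma~\ref{lemma-1-1} the latter set is exactly $\{\tau_{<}(-d/c),\tau_{>}(-d/c)\}$ with $\tau_{<}(-d/c)\in F_0^{<}$ and $\tau_{>}(-d/c)\in F_0^{>}$; these two points are distinct since $F_0^{<}\cap F_0^{>}=\varnothing$. Applying the injective M\"{o}bius map $\gamma$ then produces the two distinct zeros $\tfrac{a\tau_{<}(-d/c)+b}{c\tau_{<}(-d/c)+d}$ and $\tfrac{a\tau_{>}(-d/c)+b}{c\tau_{>}(-d/c)+d}$ of $E_6'$ in $\gamma(F_0)$, which is the content of (2).

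It remains to check that the displayed collection captures every zero of $E_6'$ in $\mathbb{H}$ exactly once, and this disjointness/exhaustion bookkeeping is the only point that needs genuine care. The tiles $\gamma(F_0)$, indexed by $\gamma\in\Gamma_0(2)/\{\pm I_2\}$, cover $\mathbb{H}$; the assignment $\gamma\mapsto\gamma(F_0)$ is injective, and distinct tiles meet only along their boundaries. Every zero produced above lies in the corresponding open tile $\gamma(\mathring F_0)$: the point $\tau_\infty$ lies in $\mathring F_0$ because $b_\infty\in(\tfrac12,\tfrac{\sqrt3}2)$, while for $c\neq0$ the zeros lie in $F_0^{<}\cup F_0^{>}\subset\mathring F_0$, Lemma~\ref{lemma-10} ruling out zeros on $\partial F_0\cap\mathbb{H}$. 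Hence no zero sits on a tile boundary, so each zero belongs to a unique fundamental domain and is listed precisely once.

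In short, the analytic substance---that each tile carries the correct number of zeros at the stated locations---has already been supplied by Theorems~\ref{uniquezero} and~\ref{Unique-pole} together with Lemma~\ref{lemma-1-1}, so the present argument is purely organizational. The main (and only real) obstacle is the transition from ``two zeros per tile'' to ``a global, repetition-free enumeration of all zeros of $E_6'$,'' which I would settle via the tiling and interiority facts just described; everything else is the transformation identity \eqref{g3-fC} applied to the already-constructed zeros $\tau_{<}$ and $\tau_{>}$.
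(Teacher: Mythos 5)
Your proposal is correct and follows essentially the same route as the paper: part (1) is Theorem \ref{uniquezero}, and part (2) combines the transformation identity \eqref{g3-fC} with Theorem \ref{Unique-pole} and the labelling of the two zeros from Lemma \ref{lemma-1-1}, then globalizes via the tiling $\mathbb{H}=\bigcup_{\gamma}\gamma(F_0)$ and the fact that all zeros lie in tile interiors. The only additions beyond the paper's (deliberately terse) presentation are the explicit verification that $-d/c\in\mathbb{Q}\setminus\{0,1\}$ and the repetition-free bookkeeping, both of which are correct.
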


Now we can give the precise definition of the three curves. Recall that
for each $k\in \{1,2,3\}$, the proof of Lemma \ref{lemma-1-1} implies that
\begin{align*}\Gamma_k\ni C\mapsto \tau_<(C)\in F_0^{<}\;\text{\it is smooth};\\
 \Gamma_k\ni C\mapsto \tau_>(C)\in F_0^{>}\;\text{\it is smooth}.\end{align*}
It follows from Step 1 in the proof of Theorem \ref{Unique-pole}-(1) (see particularly (\ref{tau1})), $\lim_{C\to +\infty}\tau_>(C)\neq\tfrac{1}{4}+i\infty$ and $\lim_{C\to -\infty}\tau_<(C)\neq\tfrac{3}{4}+i\infty$ that
\begin{equation}\label{Cccinf}\lim_{C\to +\infty}\tau_<(C)=\tfrac{1}{4}+i\infty,\quad \lim_{C\to -\infty}\tau_>(C)=\tfrac{3}{4}+i\infty,\end{equation}
and then $\lim_{C\to +\infty}\tau_>(C)\notin\{0,1,\infty\}$ and $\lim_{C\to -\infty}\tau_<(C)\notin\{0,1,\infty\}$, where we also used (\ref{phi-01}). Since $C=\phi(\tau_{\lessgtr}(C))$ and $\phi(\tau)=\infty$ if and only if $(g_2^2-18\eta_1g_3)(\tau)=0$, we finally conclude from Theorem \ref{uniquezero} that
\begin{equation}\label{curve3-1}\lim_{C\to +\infty}\tau_>(C)=\tau_{\infty}=\lim_{C\to -\infty}\tau_<(C).\end{equation}

On the other hand, we see from (\ref{asy-0-k})-(\ref{asy-1-k}) that
{\allowdisplaybreaks
\begin{align}\label{curve1-1}
\lim_{(0,1)\ni C\to 1}\tau_{<}(C)&=\tau_1=\lim_{(1,+\infty)\ni C\to 1}\tau_{<}(C),\\
\lim_{(0,1)\ni C\to 1}\tau_{>}(C)&=1=\lim_{(1,+\infty)\ni C\to 1}\tau_{>}(C),\nonumber\\
\lim_{(-\infty,0)\ni C\to 0}\tau_{<}(C)&=0=\lim_{(0,1)\ni C\to 0}\tau_{<}(C),\nonumber\\
\label{curve2-1}\lim_{(-\infty,0)\ni C\to 0}\tau_{>}(C)&=\tau_0=\lim_{(0,1)\ni C\to 0}\tau_{>}(C).
\end{align}
}%
Since $f_1(\tau_1)=0$, by defining $\tau_<(1):=\tau_1$, it follows from the implicit function theorem, (\ref{Cccinf}) and (\ref{curve1-1})-(\ref{curve2-1}) that
\begin{align*}(0,+\infty)\ni C\mapsto \tau_<(C)\in F_0^{<}\;\text{\it is smooth},\end{align*}
and so give a curve in $F_0$:
\begin{align}\label{curve1-2}&\mathcal{C}_{2}:=\{\tau_{<}(C)\,|\, C\in (0,+\infty), \tau_<(1)=\tau_1\}\subset F_0^{<}\\
&\text{with}\; \partial \mathcal{C}_{2}=\{0, \tfrac{1}{4}+i\infty\}.\nonumber\end{align}
Similarly, since $f_0(\tau_0)=0$, by defining $\tau_{>}(0):=\tau_0$, we see that
\begin{align*}(-\infty, 1)\ni C\mapsto \tau_>(C)\in F_0^{>}\;\text{\it is smooth},\end{align*}
and so give a curve in $F_0$:
\begin{align}\label{curve2-2}&\mathcal{C}_{3}:=\{\tau_{>}(C)\,|\, C\in (-\infty,1), \tau_{>}(0)=\tau_0\}\subset F_0^{>}\\
&\text{with}\; \partial \mathcal{C}_{3}=\{1, \tfrac{3}{4}+i\infty\}.\nonumber\end{align}
Recall again that $C=\phi(\tau_{\lessgtr}(C))$ and $\phi(\tau_{\infty})=\infty$.
Define
\[\tau(C):=\begin{cases}\tau_{<}(C)\quad\text{if}\;C\in (-\infty, 0),\\
\tau_{\infty}\quad\text{if}\; C=\infty,\\
\tau_{>}(C)\quad \text{if}\; C\in (1, +\infty).\end{cases}\]
Then it follows from (\ref{curve3-1}) that the image gives a curve in $F_0$:
\begin{align}\label{curve3-2}\mathcal{C}_{1}:=&\{\tau(C)\,|\, C\in (-\infty,0)\cup\{\infty\}\cup (1, +\infty)\}\\
=&\{\tau_{<}(C)\,|\, C\in (-\infty,0)\}\cup\{\tau_{\infty}\}\cup\{\tau_{>}(C)\,| \,C\in (1, +\infty)\}\nonumber\\
\text{with}&\; \partial \mathcal{C}_{1}=\{0, 1\}.\nonumber\end{align}

\begin{lemma}\label{pro-curve}  {\ }

\begin{itemize}
\item[(i)] Each of $\mathcal{C}_{1}, \mathcal{C}_2, \mathcal{C}_3$ has no self-intersection, and
any two of them have no intersections.
\item[(ii)]
The curve $\mathcal{C}_{1}$ is symmetric with respect to the line
$\operatorname{Re}\tau=\frac{1}{2}$; $\mathcal{C}_{2}$ and $\mathcal{C}_{3}$ are symmetric with respect to the line $\operatorname{Re}\tau=\frac{1}{2}$.
\end{itemize}
\end{lemma}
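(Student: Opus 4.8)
The plan is to extract everything from two structural facts: the single-valuedness of the meromorphic function $\phi$ settles part (i), while a reflection identity for $\phi$ across the line $\operatorname{Re}\tau=\frac12$ settles part (ii). Throughout I would write $\sigma(\tau):=1-\bar\tau$ for this reflection; it is an involution of $\mathbb{H}$ that preserves $F_0$, fixes the line $\{\operatorname{Re}\tau=\frac12\}$ pointwise, and interchanges $F_0^{<}$ and $F_0^{>}$ (since $|\sigma(\tau)-\tfrac12|=|\tau-\tfrac12|$ and $\operatorname{Re}\sigma(\tau)=1-\operatorname{Re}\tau$). For part (i) the decisive observation is that on every branch the parameter is recovered from the point itself, because $C=\phi(\tau_{\lessgtr}(C))$. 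Thus each map $C\mapsto\tau_<(C)$ and $C\mapsto\tau_>(C)$ is injective (it has the left inverse $\phi$), ruling out self-intersections inside a single branch. To conclude I would note that $\mathcal{C}_2\subset F_0^{<}$ and $\mathcal{C}_3\subset F_0^{>}$ lie in disjoint regions, while $\mathcal{C}_1$ consists of an $F_0^{<}$-arc ($C\in(-\infty,0)$), the single point $\tau_\infty$ on the dividing line, and an $F_0^{>}$-arc ($C\in(1,+\infty)$). Any coincidence of two points with the same $\phi$-value forces equal parameters; combined with the disjointness of the regions and of the parameter ranges $(-\infty,0),(0,+\infty),(1,+\infty)$ used on a common side, this excludes all self- and mutual intersections. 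Lemma \ref{line12} keeps the arcs strictly off the line $\operatorname{Re}\tau=\frac12$ except at the glueing point $\tau_\infty$ (which is not a zero of any finite $f_C$), so the two arcs of $\mathcal{C}_1$ meet only at $\tau_\infty$.

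For part (ii) the core is the reflection identity
\[\phi(\sigma(\tau))=1-\overline{\phi(\tau)}.\]
To establish it I would first record that $g_2,g_3,\eta_1$ all have real Fourier coefficients in $q=e^{2\pi i\tau}$ (visible from the $q$-expansions \eqref{q-exp-eta1}--\eqref{q-exp-g3}), whence $\overline{g_k(\tau)}=g_k(-\bar\tau)$ via $\overline{q(\tau)}=q(-\bar\tau)$, and then $g_k(-\bar\tau)=g_k(1-\bar\tau)$ by $1$-periodicity; likewise $\eta_1(\sigma(\tau))=\overline{\eta_1(\tau)}$. Substituting into the definition \eqref{Cphi} of $\phi$ and using $\overline{36\pi i}=-36\pi i$, the two fractional terms cancel in the sum, giving $\phi(\sigma(\tau))+\overline{\phi(\tau)}=1$, which is the displayed identity.

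Consequently, if $C\in\mathbb{R}$ and $\phi(\tau)=C$ then $\phi(\sigma(\tau))=1-C$; since $\sigma$ carries $F_0^{<}$ onto $F_0^{>}$, the uniqueness of the zero of $f_C$ in each half (Lemma \ref{lemma-1-1}) yields $\sigma(\tau_<(C))=\tau_>(1-C)$ for all $C\in\mathbb{R}\setminus\{0,1\}$, and by the involution property also $\sigma(\tau_>(C))=\tau_<(1-C)$. Applying this to the parameterizations, $\sigma(\mathcal{C}_2)=\{\tau_>(1-C)\mid C\in(0,+\infty)\}=\{\tau_>(C')\mid C'\in(-\infty,1)\}=\mathcal{C}_3$, so $\mathcal{C}_2$ and $\mathcal{C}_3$ are mirror images. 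For $\mathcal{C}_1$, the reflection sends the $F_0^{<}$-arc onto the $F_0^{>}$-arc and conversely, while $\sigma(\tau_\infty)=1-\overline{\tau_\infty}=\tau_\infty$; hence $\sigma(\mathcal{C}_1)=\mathcal{C}_1$. I would also verify consistency at the glued endpoints using $\tau_1=1-\overline{\tau_0}$ from Theorem \ref{Unique-pole}, which matches $\sigma(\tau_<(1))=\sigma(\tau_1)=\tau_>(0)=\tau_0$.

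The hard part will be the reflection identity itself: the careful bookkeeping of how $g_2,g_3,\eta_1$ transform under $\tau\mapsto1-\bar\tau$ through conjugation of real $q$-series, together with the sign $\overline{36\pi i}=-36\pi i$, so that the two fractional contributions combine to give exactly $1-\overline{\phi(\tau)}$ rather than some other affine expression in $\phi(\tau)$. Once this pointwise identity is secured, the remainder—injectivity for (i) and the set equalities $\sigma(\mathcal{C}_2)=\mathcal{C}_3$, $\sigma(\mathcal{C}_1)=\mathcal{C}_1$ for (ii)—reduces to comparing regions and disjoint parameter ranges.
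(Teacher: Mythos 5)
Your proposal is correct and follows essentially the same route as the paper: part (i) rests on the observation that $\phi$ recovers the parameter $C$ from the point (so distinct $f_{C}$'s share no zeros), combined with the disjointness of $F_0^{<}$, $F_0^{>}$ and the line $\operatorname{Re}\tau=\frac12$; part (ii) rests on the reflection $\tau\mapsto 1-\bar\tau$ together with the conjugation laws for $g_2,g_3,\eta_1$, your identity $\phi(1-\bar\tau)=1-\overline{\phi(\tau)}$ being exactly equivalent to the paper's $f_{1-C}(1-\bar\tau)=-\overline{f_C(\tau)}$. The only cosmetic difference is that you phrase everything through $\phi$ (which conveniently avoids needing the transformation of $\eta_2$) and derive the conjugation symmetries from real $q$-coefficients rather than citing them.
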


\begin{proof}
(i).
By the definitions of the curves and (\ref{fc4-2}), the assertion (i) follows easily from the following simple observation: \emph{For any $C_1\neq C_2$, $f_{C_1}(\tau)$ and $f_{C_2}(\tau)$ have no common zeros}, which is a consequence of the fact that $f_C(\tau)=0$ implies $C=\phi(\tau)$.

(ii). It is easy to prove (see e.g. \cite[Appendix A]{CL-E2}) that $\eta_{1}(1-\bar{\tau
})=\overline{\eta_{1}(\tau)}$, $g_{k}(1-\bar{\tau})=\overline{g_{k}(\tau)}$, $k=2,3$, and
 $\eta_{2}(1-\bar{\tau})=\overline{\eta
_{1}(\tau)}-\overline{\eta_{2}(\tau)}$.
Then for $C\in \mathbb{R}$, we have
\begin{align*}
  &f_{1-C}(1-\bar{\tau})\\
=&(\bar{\tau}-C)g_{2}(1-\bar{\tau})-18\left(  (1-C)\eta_{1}(1-\bar{\tau})-\eta_{2}(1-\bar{\tau})\right)
g_3(1-\bar{\tau})\\
=&-\overline{(C-\tau)g_{2}(\tau)}+18(C\overline{\eta_{1}(\tau)}-\overline{\eta_{2}(\tau)})
\overline{g_3(\tau)}=-\overline{f_{C}(\tau)}.
\end{align*}
Thus, $\tau_1=1-\overline{\tau_0}$ (this was already proved in Theorem \ref{Unique-pole}) and
\[\tau_{<}(1-C)=1-\overline{\tau_{>}(C)},\;\tau_{>}(1-C)=1-\overline{\tau_{<}(C)},\;\forall C\in\mathbb{R}\setminus\{0,1\}.\]
Again together with the definitions (\ref{curve1-2})-(\ref{curve3-2}) of the curves, the assertion (ii) follows. This completes the proof.
\end{proof}

\begin{lemma}\label{pro-smooth}
The three curves $\mathcal{C}_{1}$, $\mathcal{C}_{2}$, $\mathcal{C}_{3}$ are all smooth curves in $F_0$.
\end{lemma}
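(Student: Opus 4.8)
The plan is to reduce the smoothness of each curve to the statement that, near every one of its points, the parametrizing map $\phi$ of \eqref{Cphi} is---after at most one inversion---a local biholomorphism, so that the curve is locally the pre-image of the real axis under a biholomorphism and hence a smooth arc. Everything rests on the factorization
\[
f_C(\tau)=\bigl(C-\phi(\tau)\bigr)(g_2^2-18\eta_1 g_3)(\tau),
\]
which is immediate from \eqref{fC-exp} and \eqref{Cphi}.

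First I would handle all points of $\mathcal{C}_1\cup\mathcal{C}_2\cup\mathcal{C}_3$ at which $\phi$ is finite, i.e. every point except $\tau_\infty$. Fix such a $\tau_*$ and put $C_*:=\phi(\tau_*)\in\mathbb{R}$. Since $\tau_*\neq\tau_\infty$, Theorem \ref{uniquezero} gives $(g_2^2-18\eta_1 g_3)(\tau_*)\neq 0$; differentiating the factorization at the zero $\tau_*$ of $f_{C_*}$ then kills the term carrying the factor $C_*-\phi$, leaving $f_{C_*}'(\tau_*)=-\phi'(\tau_*)(g_2^2-18\eta_1 g_3)(\tau_*)$. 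As $f_{C_*}'(\tau_*)\neq 0$ by Lemma \ref{simplezero-F0} (precisely the computation \eqref{fcde}), I get $\phi'(\tau_*)\neq 0$, so $\phi$ is a local biholomorphism at $\tau_*$. In a small disc about $\tau_*$ the relevant curve is exactly $\phi^{-1}(\mathbb{R})$, the image under the local inverse of $\phi$ of a real interval around $C_*$, hence a smooth (in fact real-analytic) arc. Applied at the two joining values $C=1$ (with $\tau_*=\tau_1\in F_0^{<}$) and $C=0$ (with $\tau_*=\tau_0\in F_0^{>}$), this shows in particular that the sub-arcs defined over adjacent intervals $\Gamma_k$ glue together smoothly, since both equal $\phi^{-1}(\mathbb{R})$ near the joining point.

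The one genuinely different point---and the step I expect to be the main obstacle---is $\tau_\infty\in\mathcal{C}_1$, where $\phi(\tau_\infty)=\infty$ and the parameter value is $C=\infty$, so $\phi$ itself cannot serve as a chart. Here I would reparametrize by $1/\phi$. By Lemma \ref{lemma-simplezero} the zero of $g_2^2-18\eta_1 g_3$ at $\tau_\infty$ is simple, and $g_3(\tau_\infty)\neq 0$, so $\phi$ has a \emph{simple} pole at $\tau_\infty$; consequently $1/\phi$ is holomorphic near $\tau_\infty$ with $(1/\phi)(\tau_\infty)=0$ and $(1/\phi)'(\tau_\infty)\neq 0$, i.e. a local biholomorphism. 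Near $\tau_\infty$ the curve $\mathcal{C}_1$ coincides with $(1/\phi)^{-1}(\mathbb{R})$, once again the smooth pre-image of the real axis; the two ends of $\mathcal{C}_1$ produced by $C\to-\infty$ and $C\to+\infty$ in \eqref{curve3-1} correspond to $1/\phi$ tending to $0$ from opposite sides and therefore join smoothly across $\tau_\infty$.

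Putting the two cases together, every point of each curve has a neighborhood in which the curve is a smooth embedded arc; combined with Lemma \ref{pro-curve}(i), which excludes self-intersections, this establishes that $\mathcal{C}_1$, $\mathcal{C}_2$, $\mathcal{C}_3$ are smooth curves in $F_0$. The only delicate input is the simple-pole analysis at $\tau_\infty$; the finite points are uniform consequences of $\phi'\neq 0$.
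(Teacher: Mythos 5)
Your proof is correct, and for the most delicate part it takes a genuinely different route from the paper. At the finite points of $\phi$ your argument and the paper's coincide: both derive $\phi'(\tau_*)=-f_{C_*}'(\tau_*)/(g_2^2-18\eta_1 g_3)(\tau_*)\neq 0$ from the factorization $f_C=(C-\phi)\cdot(g_2^2-18\eta_1 g_3)$ together with Lemma \ref{simplezero-F0}, and then read off local smoothness of the level set $\{\operatorname{Im}\phi=0\}$ (the paper phrases this via the Cauchy--Riemann identities $\partial_a\operatorname{Im}\phi=\operatorname{Im}\phi'$, $\partial_b\operatorname{Im}\phi=\operatorname{Re}\phi'$, which is the same local-biholomorphism statement). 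The divergence is at $\tau_\infty\in\mathcal{C}_1$, where $\phi=\infty$. The paper sidesteps this point entirely: using the transformation law \eqref{eqw} it establishes the identity $\mathcal{C}_1=\{\frac{1}{1-\tau}\,|\,\tau\in\mathcal{C}_2\}$ (see \eqref{C1-C2}), so smoothness of $\mathcal{C}_1$ --- including at $\tau_\infty=\frac{1}{1-\tau_1}$ --- is inherited from smoothness of $\mathcal{C}_2$ at the interior point $\tau_1$ under a M\"obius map; $\mathcal{C}_3$ is then handled by the reflection symmetry of Lemma \ref{pro-curve}-(ii). You instead work directly at $\tau_\infty$: since the zero of $g_2^2-18\eta_1 g_3$ there is simple (Lemma \ref{lemma-simplezero}) and $g_3(\tau_\infty)\neq 0$, the function $\phi$ has a simple pole, so $1/\phi$ is a local biholomorphism and $\mathcal{C}_1$ is locally $(1/\phi)^{-1}(\mathbb{R})$, with the two branches $C\to\pm\infty$ of \eqref{curve3-1} gluing across $1/\phi=0$. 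Both arguments are complete; the paper's is slightly more economical in that one local computation (on $\mathcal{C}_2$) plus two global symmetries does everything, while yours is more uniform and self-contained at each point and makes the behaviour at $C=\infty$ transparent without invoking the identity \eqref{C1-C2}.
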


\begin{proof}
First we prove that $\mathcal{C}_{2}$ is smooth.
Recall $\phi(\tau)$ defined in $(\ref{Cphi})$. It follows from Lemma \ref{uniquezero} that $g_2^2-18\eta_1g_3\neq 0$ in $F_{0}^{<}\cup F_{0}^{>}$, i.e. $\phi(\tau)$ is holomorphic in $F_0^{<}\cup F_{0}^{>}$.
We claim that
\begin{equation}\label{smoot}
\phi'(\tau)\neq 0\quad \forall \tau\in \mathcal{C}_{2}\subset F_0^{<}.
\end{equation}

Fix any $\tau_0\in \mathcal{C}_{2}$. Then
$\tau_0=\tau_{<}(C)$ for some $C\in (0, +\infty)$. In particular, $f_{C}'(\tau_0)\neq 0$ by Lemma \ref{simplezero-F0}. As pointed out before, $f_{C}(\tau_0)=0$ implies $\phi(\tau_0)=C$, namely
\[C-\tau_0=\frac{36\pi ig_3(\tau_0)}{(g_2^2-18\eta_1g_3)(\tau_0)}.\]
Consequently, we see from (\ref{fcde}) that
{\allowdisplaybreaks
\begin{align*}
\phi'(\tau_0)&=1+\frac{36\pi ig_3'}{g_2^2-18\eta_1g_3}(\tau_0)-\frac{36\pi ig_3(g_2^2-18\eta_1g_3)'}{(g_2^2-18\eta_1g_3)^2}(\tau_0)\\
&=1+\frac{36\pi ig_3'}{g_2^2-18\eta_1g_3}(\tau_0)-(C-\tau_0)\frac{(g_2^2-18\eta_1g_3)'}{g_2^2-18\eta_1g_3}(\tau_0)\\
&=\frac{-f_{C}'(\tau_0)}{(g_2^2-18\eta_1g_3)(\tau_0)}\neq 0.
\end{align*}
}%
This proves (\ref{smoot}).

On the other hand, it is easy to see that
\begin{equation}\label{c-phi}\mathcal{C}_{1}\cup\mathcal{C}_{2}\cup\mathcal{C}_{3}=\{  \tau\in F_{0}\left \vert
\operatorname{Im}\phi(\tau)=0 \;\text{or}\;\phi(\tau)=\infty
\right.\}\end{equation}
and $\mathcal{C}_2\subset \{  \tau\in F_{0}^{<}\,|\,
\operatorname{Im}\phi(\tau)=0\}$.
Write $\tau=a+bi$ with $a,b\in\mathbb{R}$. Since\[
\frac{\partial \operatorname{Im}\phi}{\partial a}=\operatorname{Im}\phi^{\prime},\text{ \  \ }\frac{\partial \operatorname{Im}\phi}{\partial
b}=\operatorname{Re}\phi^{\prime},
\]
we see from (\ref{smoot}) that $\mathcal{C}_{2}$ is smooth at any $\tau\in \mathcal{C}_{2}$, namely $\mathcal{C}_{2}$ is a smooth curve. A similar argument or applying Lemma \ref{pro-curve}-(ii) also shows that $\mathcal{C}_3$ is a smooth curve.
Finally, we recall (\ref{eqw}) that
\[f_{\frac{1}{1-C}}(\tfrac{1}{1-\tau})=\frac{(1-\tau)^{7}}{1-C}f_{C}(\tau),\quad C\in \mathbb{R}\setminus\{1\}.\]
Together with $\tau_\infty=\frac{1}{1-\tau_1}$,
\[
\tfrac{1}{1-\tau}\in F_{0}\Leftrightarrow \tau \in F_{0}\text{ and }\tfrac{1}{1-C}\in \mathbb{R}\backslash \{0,1\} \Leftrightarrow C\in \mathbb{R}%
\backslash \{0,1\},
\]
we easily deduce from (\ref{Cccinf})-(\ref{curve2-1}) that
\[\tau_{<}(\tfrac{1}{1-C})=\tfrac{1}{1-\tau_{<}(C)} \;\text{for}\; C\in (1,+\infty),\;\text{i.e. }\tfrac{1}{1-C}\in (-\infty, 0),\]
\[\tau_{>}(\tfrac{1}{1-C})=\tfrac{1}{1-\tau_{<}(C)} \;\text{for}\; C\in (0,1),\;\text{i.e. }\tfrac{1}{1-C}\in (1, +\infty).\]
Consequently,
{\allowdisplaybreaks
\begin{align}\label{C1-C2}\mathcal{C}_{1}
&=\{\tau_{<}(C)\,|\, C\in (-\infty,0)\}\cup\{\tau_{\infty}\}\cup\{\tau_{>}(C)\,| \,C\in (1, +\infty)\}\nonumber\\
&=\{\tfrac{1}{1-\tau_{<}(C)}\,|\,C\in (0, +\infty)\}\nonumber\\
&=\{\tfrac{1}{1-\tau}\,|\,\tau\in \mathcal{C}_2\}.\end{align}
}%
This implies that $\mathcal{C}_1$ is also a smooth curve in $F_0$.
\end{proof}

We are in the position to prove Theorem \ref{Location}.

\begin{proof}[Proof of Theorem \ref{Location}] Recall that $\mathcal{D}_0$ is the collection of those points in $F_0$ which are obtained by
transforming the critical points of $E_{6}(\tau)$ via the M\"{o}bius transformations of $\Gamma_{0}(2)$ action. Thus, we deduce from Theorem \ref{thm-number-1} that
\begin{align*}
\mathcal{D}_0&=\{\tau_{\infty}\}\cup\left \{  \left.  \tau_{<}(\tfrac{-d}{c}), \tau_{>}(\tfrac{-d}{c})\right \vert
\bigl(\begin{smallmatrix}a & b\\
c & d\end{smallmatrix}\bigr)
\in \Gamma_{0}(2)/\{ \pm I_{2}\},\,c\not =0\right \}\\
&\subset\mathcal{C}_1\cup\mathcal{C}_2\cup\mathcal{C}_3=\{\tau_{\infty}\}\cup\left \{  \left.  \tau_{<}(C), \tau_{>}(C)\right \vert
C\in \mathbb{R}\right \}.
\end{align*}
Note that \[\{\tfrac{-d}{c}\;|\;d\in\mathbb{Z},\, c\in 2\mathbb{Z}\setminus\{0\}, \, (c,d)=1\}\]
is dense in $\mathbb{Q}$ and hence dense in $\mathbb{R}$. Therefore, the denseness of $\mathcal{D}_0$ on $\mathcal{C}_1\cup\mathcal{C}_2\cup\mathcal{C}_3$, i.e. the assertion (\ref{cf0}),  follows. The proof is complete by applying Lemmas \ref{pro-curve} and \ref{pro-smooth}.
\end{proof}

\section{Distribution in fundamental domains of $SL(2,\mathbb{Z})$}

\label{distribution-F}

This section is devoted to proving Theorems \ref{thm-number-F} and \ref{Location-F}. Recall that $F$ is the basic fundamental domain of $SL(2,\mathbb{Z})$ defined in (\ref{F-de}) and $\mathcal{C}_{<}, \mathcal{C}_{>}$ are defined in (\ref{C-F}). We need the following important observation.

\begin{lemma}\label{FFF} The following statements hold.
\begin{align}\label{C2-F}&\mathcal{C}_{<}=\mathcal{C}_2\cap F=\{\tau_{<}(C)\,|\, C\in [1,+\infty), \tau_<(1)=\tau_1\}\subset F\cap F_0^{<}\\
&\text{with }\partial\mathcal{C}_{<}=\{\tau_1, \tfrac{1}{4}+i\infty\};\nonumber\end{align}
\begin{align}\label{C3-F}&\mathcal{C}_{>}=\mathcal{C}_3\cap F=\{\tau_{>}(C)\,|\, C\in (-\infty, 0], \tau_>(0)=\tau_0\}\subset F\cap F_0^{>}\\
&\text{with }\partial\mathcal{C}_{>}=\{\tau_0, \tfrac{3}{4}+i\infty\};\nonumber\end{align}
\begin{equation}\label{C1-F}\mathcal{C}_1\cap F=\emptyset.\end{equation}
In particular, $\mathcal{C}_{<}$ and $\mathcal{C}_{>}$ are symmetric with respect to the line $\operatorname{Re}\tau=\frac{1}{2}$.
\end{lemma}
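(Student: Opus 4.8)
The plan is to prove Lemma \ref{FFF} by exploiting the explicit description of $\mathcal{C}_2$ and $\mathcal{C}_3$ as smooth curves parameterized by $C$ (from Lemmas \ref{lemma-1-1}--\ref{pro-smooth}), together with the geometry of how $F$ sits inside $F_0$. Recall from \eqref{F-de} that $F_0 = F \cup \gamma_1(F) \cup \gamma_2(F)$, so the complement $F_0 \setminus F$ is covered by the two translated copies $\gamma_1(F)$ and $\gamma_2(F)$. Since $\mathcal{C}_2 \subset F_0^{<}$ and $\mathcal{C}_3 \subset F_0^{>}$, the task is to determine exactly which portion of each curve lies in $F$ versus in $\gamma_1(F) \cup \gamma_2(F)$. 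The key is to track the parameter $C$ as the point $\tau_{<}(C)$ (resp. $\tau_{>}(C)$) moves along the curve and crosses $\partial F$.

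\textbf{First}, I would identify the boundary arcs of $F$ inside $F_0$. The defining inequalities in \eqref{F-fun} are $|\tau|\geq 1$ and $|\tau-1|\geq 1$; the relevant boundary pieces inside $\mathring F_0$ are the circular arcs $|\tau|=1$ and $|\tau-1|=1$. I expect the endpoint $\tau_1$ of $\mathcal{C}_2$ to lie on $\partial F$: indeed Theorem \ref{Unique-pole}-(3) states $|\tau_1|=1$, so $\tau_1$ sits on the arc $|\tau|=1$, which is exactly where $F$ and $\gamma_2(F)$ meet. Since $\tau_1 = \tau_{<}(1)$ and $\mathcal{C}_2$ is parameterized by $C \in (0,+\infty)$ with the endpoint $0 \in \partial\mathcal{C}_2$ corresponding to $C\to 0$, I would argue that the sub-arc $C \in [1,+\infty)$ stays inside $F$ (giving $\mathcal{C}_{<}$), while the sub-arc $C \in (0,1)$ escapes into $\gamma_2(F)$. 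The cusp endpoint $\tfrac14 + i\infty$ clearly lies in $F$ since $F$ contains the vertical strip near $\operatorname{Re}\tau = \tfrac14$ at height $+\infty$. The crossing must occur precisely at $\tau_1$ because $\mathcal{C}_2$ has no self-intersection (Lemma \ref{pro-curve}-(i)) and is smooth, so it meets the arc $|\tau|=1$ transversally at the single point $\tau_1$.

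\textbf{Next}, by the symmetry established in Lemma \ref{pro-curve}-(ii), namely $\tau_{>}(1-C) = 1 - \overline{\tau_{<}(C)}$, the analogous statement for $\mathcal{C}_3 = \mathcal{C}_{>}$ follows by reflection across $\operatorname{Re}\tau = \tfrac12$. Under this reflection the arc $|\tau|=1$ maps to $|\tau-1|=1$, and $\tau_1$ maps to $\tau_0$ with $|\tau_0-1|=1$ (again Theorem \ref{Unique-pole}-(2)); the parameter range $C\in[1,+\infty)$ maps to $C\in(-\infty,0]$. This immediately yields \eqref{C3-F} and the final symmetry assertion of the lemma. For \eqref{C1-F}, I would use the identification \eqref{C1-C2} that $\mathcal{C}_1 = \{\tfrac{1}{1-\tau} \mid \tau \in \mathcal{C}_2\}$, together with $\partial\mathcal{C}_1 = \{0,1\}$: since both endpoints $0,1$ of $\mathcal{C}_1$ are cusps lying on the arcs $|\tau|=1$ and $|\tau-1|=1$, and $\mathcal{C}_1$ connects them while staying in $\mathring F_0$, the curve $\mathcal{C}_1$ must lie entirely in the region cut off from $F$ (that is, in $\gamma_1(F)$), so $\mathcal{C}_1 \cap F = \emptyset$.

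\textbf{The main obstacle} I anticipate is rigorously verifying that the crossing of $\mathcal{C}_2$ with $\partial F$ happens at exactly one point, namely $\tau_1$, and nowhere else along the arc $|\tau|=1$ (and symmetrically for $|\tau-1|=1$). A priori the smooth curve $\mathcal{C}_2$ could weave in and out of $F$ multiple times, which is precisely the concern raised before the statement of the lemma (''$\mathcal{C}_{<}$ or $\mathcal{C}_{>}$ might consist of several disjoint curves''). To rule this out, I would need to show that $\mathcal{C}_2$ meets the arc $|\tau|=1$ only at $\tau_1$; the cleanest route is to check that along $\mathcal{C}_2$ the parameter $C=\phi(\tau)$ is monotone in a suitable sense, or to use the fact that any intersection point $\tau^\ast$ of $\mathcal{C}_2$ with $|\tau|=1$ would, via the transformation $\tau \mapsto \tau/(1-\tau)$ or a related $SL(2,\mathbb{Z})$ map sending the arc to the imaginary axis, force $\phi(\tau^\ast)$ to satisfy a constraint identifying it with $\tau_1$. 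I expect this monotonicity/uniqueness argument to be the delicate technical heart of the proof.
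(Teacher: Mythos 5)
Your overall picture---find the single crossing of $\mathcal{C}_2$ with $\partial F$ at $\tau_1$, then transport everything by the reflection $\tau\mapsto 1-\bar{\tau}$---is the right one, but the decisive step is left unproven, and you flag this yourself. Smoothness and absence of self-intersections do \emph{not} imply that $\mathcal{C}_2$ meets the arc $|\tau|=1$ at only one point: a smooth embedded arc can cross a circle arbitrarily many times, which is exactly the ``several disjoint curves'' scenario the lemma is meant to exclude. The same unjustified leap occurs in your argument for \eqref{C1-F}: the fact that both endpoints $0,1$ of $\mathcal{C}_1$ lie outside $\overline{F}$ does not force the whole curve to avoid $F$; it could enter and exit. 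So as written, both \eqref{C2-F} and \eqref{C1-F} rest on a step that is not established.

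The missing idea is to combine the identity \eqref{C1-C2}, $\mathcal{C}_1=\{\tfrac{1}{1-\tau}\,|\,\tau\in\mathcal{C}_2\}$, with the elementary fact that the M\"{o}bius map $\tau\mapsto\tfrac{1}{1-\tau}$ carries the circle $|\tau|=1$ onto the vertical line $\operatorname{Re}\tau=\tfrac{1}{2}$. Since $\mathcal{C}_2\subset F_0^{<}$, the only portion of $\partial F$ it can meet is the arc $|\tau|=1$; for any such intersection point $\tau^{\ast}$ the point $\tfrac{1}{1-\tau^{\ast}}$ lies on $\mathcal{C}_1\cap\{\operatorname{Re}\tau=\tfrac12\}$, and by the construction \eqref{curve3-2} that intersection is the single point $\tau_\infty$ (all of $\mathcal{C}_1$ except $\tau_\infty$ lies in $F_0^{<}\cup F_0^{>}$). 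Hence $\tau^{\ast}=\tfrac{\tau_\infty-1}{\tau_\infty}=\tau_1$, so $\mathcal{C}_2\cap\partial F=\{\tau_1\}$, and connectedness of the two parameter sub-arcs $C\in(0,1)$ and $C\in(1,+\infty)$, together with the locations of their limit points $0$ and $\tfrac14+i\infty$, yields \eqref{C2-F}; \eqref{C3-F} then follows by the reflection exactly as you propose. For \eqref{C1-F}, if $\mathcal{C}_1$ met $F$ it would have to cross $\partial F\setminus\{\infty\}$, and by the symmetry of $\mathcal{C}_1$ one may take the crossing point $\tau$ on $|\tau-1|=1$; then $\tfrac{\tau-1}{\tau}\in\mathcal{C}_2$ has real part $\tfrac12$, contradicting $\mathcal{C}_2\subset F_0^{<}$. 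Your second suggested route (``a related $SL(2,\mathbb{Z})$ map sending the arc to the imaginary axis'') gestures in this direction, but the correct target is the line $\operatorname{Re}\tau=\tfrac12$ rather than the imaginary axis, and the contradiction comes from the known geometry of $\mathcal{C}_1$ and $\mathcal{C}_2$ relative to that line, not from a constraint on $\phi(\tau^{\ast})$; no monotonicity of the parameter $C$ along the curve is needed.
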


\begin{proof}
Recalling the definition (\ref{curve1-2}) of $\mathcal{C}_2$ that $\mathcal{C}_2\subset F_0^{<}$, we have $\mathcal{C}_2\cap \partial F\subset \{\tau \,|\,|\tau|=1\}$. Suppose $\tau\in \mathcal{C}_2\cap \partial F$, then $|\tau|=1$ and so $\operatorname{Re}\frac{1}{1-\tau}=\frac{1}{2}$. Since the definition (\ref{curve3-2}) of $\mathcal{C}_1$ implies
\[\mathcal{C}_1\cap \{\tau \,|\, \operatorname{Re}\tau=\tfrac{1}{2}\}=\{\tau_\infty\},\]
we conclude from (\ref{C1-C2}) that $\frac{1}{1-\tau}=\tau_\infty$, i.e. $\tau=\frac{\tau_\infty-1}{\tau_\infty}=\tau_1$ by applying Theorem \ref{Unique-pole}. This proves
\[\mathcal{C}_2\cap \partial F=\{\tau_1\}=\{\tau_{<}(1)\}.\]
Consequently, (\ref{C2-F}) follows from the definition (\ref{curve1-2}) of $\mathcal{C}_2$. By (\ref{C2-F}) and Lemma \ref{pro-curve}-(ii), we easily obtain (\ref{C3-F}), the symmetry of $\mathcal{C}_{<}$ and $\mathcal{C}_{>}$ with respect to the line $\operatorname{Re}\tau=\frac{1}{2}$ and
\[\mathcal{C}_3\cap \partial F=\{\tau_0\}=\{\tau_{>}(0)\}.\]
Finally, suppose $\mathcal{C}_1\cap F\neq \emptyset$, then $\mathcal{C}_1\cap (\partial F\setminus \{\infty\})\neq \emptyset$. Since Lemma \ref{pro-curve}-(ii) says that $\mathcal{C}_1$ is symmetric with respect to $\operatorname{Re}\tau=\frac{1}{2}$, there is $\tau\in \mathcal{C}_1$ such that $|\tau-1|=1$. It follows from (\ref{C1-C2}) that $\tilde{\tau}:=\frac{\tau-1}{\tau}\in \mathcal{C}_2$ and $\operatorname{Re}\tilde{\tau}=\frac{1}{2}$, a contradiction with $\mathcal{C}_2\subset F_0^{<}$. Therefore, (\ref{C1-F}) holds.
\end{proof}

As a consequence of Lemma \ref{FFF}, we can restate Theorem \ref{Unique-pole} as follows.

\begin{theorem}
\label{Unique-pole-F}Recall $f_C(\tau)$ defined in (\ref{fC-exp}). Then the following statements hold.
\begin{itemize}
\item[(1)] $f_{C}(\tau)$ has no zeros in $F$ for any $C\in (0,1)$.
\item[(2)] For any $C\in (1,+\infty)$ (resp. $C\in (-\infty, 0)$), $f_{C}(\tau)$
has a unique zero $\tau_{<}(C)$ (resp. $\tau_{>}(C)$) in $F$, which satisfies $\tau_{<}(C)\in \mathring{F}\cap F_0^<$ (resp. $\tau_{>}(C)\in \mathring{F}\cap F_0^>$).
\item[(3)] $\tau_0=\tau_{>}(0)=\frac{1}{1-\tau_{\infty}}$ is the unique zero of $f_0(\tau)$ in $F$. Clearly $|\tau_0-1|=1$.
\item[(4)] $\tau_1=\tau_{<}(1)=\frac{\tau_{\infty}-1}{\tau_{\infty}}$ is the unique zero of $f_1(\tau)$ in $F$. Clearly $|\tau_1|=1$.
\end{itemize}
\end{theorem}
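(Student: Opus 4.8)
The plan is to deduce the theorem entirely from Theorem \ref{Unique-pole}, which already locates every zero of $f_C$ in the larger domain $F_0$, together with Lemma \ref{FFF}, which records precisely which portions of the three curves survive the restriction to $F$. Since $F\subset F_0$, each zero of $f_C$ in $F$ must be one of the (at most two) zeros already found in $F_0$, so the whole argument reduces to deciding, for each value of $C$, which of those known zeros actually lie in $F$; no new analysis of $f_C$ is required.

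First I would set up the dictionary assigning each zero of $f_C$ to one of the curves. From the definitions (\ref{curve1-2})--(\ref{curve3-2}) one reads off that $\tau_<(C)\in\mathcal{C}_2$ for $C\in(0,+\infty)$ while $\tau_<(C)\in\mathcal{C}_1$ for $C\in(-\infty,0)$, and symmetrically $\tau_>(C)\in\mathcal{C}_3$ for $C\in(-\infty,1)$ while $\tau_>(C)\in\mathcal{C}_1$ for $C\in(1,+\infty)$. I would combine this with the three conclusions of Lemma \ref{FFF}, namely $\mathcal{C}_1\cap F=\emptyset$, $\mathcal{C}_2\cap F=\{\tau_<(C):C\in[1,+\infty)\}$ and $\mathcal{C}_3\cap F=\{\tau_>(C):C\in(-\infty,0]\}$, and with the injectivity of the maps $C\mapsto\tau_{\lessgtr}(C)$ (which holds since $f_C(\tau)=0$ forces $C=\phi(\tau)$; cf.\ the proof of Lemma \ref{pro-curve}(i)). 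Membership of each zero in $F$ then depends only on the numerical range containing $C$.

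With this dictionary the four assertions follow by inspection. For (1), if $C\in(0,1)$ the two zeros in $F_0$ are $\tau_<(C)\in\mathcal{C}_2$ and $\tau_>(C)\in\mathcal{C}_3$, but $C\notin[1,+\infty)$ and $C\notin(-\infty,0]$ push both out of $F$, leaving none. For (2) with $C\in(1,+\infty)$, the zero $\tau_<(C)\in\mathcal{C}_2\cap F$ remains while its partner $\tau_>(C)\in\mathcal{C}_1$ is removed by $\mathcal{C}_1\cap F=\emptyset$; the range $C\in(-\infty,0)$ is handled identically after swapping the roles of $\mathcal{C}_2$ and $\mathcal{C}_3$. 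For (3) and (4) I would invoke Theorem \ref{Unique-pole}(2)--(3): $\tau_0=\tau_>(0)$ and $\tau_1=\tau_<(1)$ are the unique zeros in $F_0$ of $f_0$ and $f_1$, each is the finite endpoint of $\mathcal{C}_>$ respectively $\mathcal{C}_<$ and hence lies in $F$, and the boundary relations $|\tau_0-1|=1$, $|\tau_1|=1$ come straight from Theorem \ref{Unique-pole}.

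The one place that needs genuine care, rather than bookkeeping, is the interior claim in (2): that $\tau_<(C)\in\mathring{F}$ for every $C\in(1,+\infty)$, and symmetrically $\tau_>(C)\in\mathring{F}$ for $C\in(-\infty,0)$. Here I would lean on the sharper fact established inside the proof of Lemma \ref{FFF}, that $\mathcal{C}_2\cap\partial F=\{\tau_1\}$ and $\mathcal{C}_3\cap\partial F=\{\tau_0\}$. Since $\tau_<(\cdot)$ is injective, for $C>1$ we have $\tau_<(C)\neq\tau_1=\tau_<(1)$, so $\tau_<(C)\in\mathcal{C}_2\cap F$ cannot meet $\partial F$ and therefore lies in $\mathring{F}$; and because $\tau_<(C)\in F_0^{<}$ already forces $\operatorname{Re}\tau_<(C)<\tfrac12$, the only boundary arc it might have touched is $|\tau|=1$, which is precisely the arc excluded by $\mathcal{C}_2\cap\partial F=\{\tau_1\}$. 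This reliance on the finer content of Lemma \ref{FFF}, rather than Theorem \ref{Unique-pole} alone, is what I expect to be the main thing to pin down carefully.
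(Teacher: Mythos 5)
Your proposal is correct and follows essentially the same route as the paper, which presents Theorem \ref{Unique-pole-F} precisely as a restatement of Theorem \ref{Unique-pole} obtained by intersecting with $F$ via Lemma \ref{FFF}; your write-up just makes explicit the bookkeeping (the curve membership of $\tau_{\lessgtr}(C)$, the injectivity of $C\mapsto\tau_{\lessgtr}(C)$, and the boundary identification $\mathcal{C}_2\cap\partial F=\{\tau_1\}$, $\mathcal{C}_3\cap\partial F=\{\tau_0\}$) that the paper leaves implicit.
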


We are in the position to prove Theorems \ref{thm-number-F} and \ref{Location-F}.

\begin{proof}[Proof of Theorem \ref{thm-number-F}]
Let $\gamma (F)$ be a fundamental domain of $SL(2,\mathbb{Z})$ with $\gamma=
\bigl(\begin{smallmatrix}a & b\\
c & d\end{smallmatrix}\bigr)
\in SL(2,\mathbb{Z})/\{ \pm I_{2}\}$.

If $c=0$, then $\gamma (F)=F+m\subset F_0+m$ for some $m\in\mathbb{Z}$. Since Theorem \ref{uniquezero} shows that $\tau_\infty+m\notin F+m$ is the unique zero of $E_6'(\tau)$ in $F_0+m$, we see that $E_6'(\tau)$ has no zeros in $\gamma (F)$.

So it suffices to consider $c\neq 0$.
Write $\tau^{\prime}
=\gamma \cdot \tau=\frac{a\tau+b}{c\tau+d}$ with $\tau \in F$. Then (\ref{g3-fC}) gives
\[(g_2^2-18\eta_{1}g_{3})(\tau')=-c(c\tau+d)^{7}f_{\frac{-d}{c}}(\tau).
\]
If $\frac{-d}{c}\in (0,1)$, Theorem \ref{Unique-pole-F} shows that $f_{\frac{-d}{c}}(\tau)$ has no zeros in $F$.
Consequently, $g_2^2-18\eta_1g_3$ and so $E_6'(\tau)$ has no zeros in $\gamma(F)$. This proves (1).

If $\frac{-d}{c}\in [1,+\infty)$, Theorem \ref{Unique-pole-F} shows that $f_{\frac{-d}{c}}(\tau)$ has a unique zero $\tau_{<}(\frac{-d}{c})$ in $F$. As before, it follows that $\frac{a\tau_{<}(\frac{-d}{c})+b}{c\tau_{<}(\frac{-d}{c})+d}$ is the unique zero of $E_6'(\tau)$ in $\gamma(F)$, and $\frac{a\tau_{<}(\frac{-d}{c})+b}{c\tau_{<}(\frac{-d}{c})+d}\in \partial\gamma(F)$ if and only if $\frac{-d}{c}=1$.
Similarly, if $\frac{-d}{c}\in (-\infty, 0]$, then $\frac{a\tau_{>}(\frac{-d}{c})+b}{c\tau_{>}(\frac{-d}{c})+d}$ is the unique zero of $E_6'(\tau)$ in $\gamma(F)$, and $\frac{a\tau_{>}(\frac{-d}{c})+b}{c\tau_{>}(\frac{-d}{c})+d}\in \partial\gamma(F)$ if and only if $\frac{-d}{c}=0$. This proves (2).
The proof is complete.
\end{proof}

\begin{proof}[Proof of Theorem \ref{Location-F}]
Theorem \ref{Location-F}-(1) follows from Lemma \ref{FFF} with $\tilde{\tau}_<=\tau_1$ and $\tilde{\tau}_>=\tau_0$. Recall that $\mathcal{D}$ is the collection of those points in $F$ obtained by
transforming the critical points of $E_{6}(\tau)$ via the M\"{o}bius transformations of $SL(2,\mathbb{Z})$ action. It follows from Theorem \ref{thm-number-F} and Lemma \ref{FFF} that
\begin{align*}
\mathcal{D}&=\left \{  \left.  \tau_{<}(\tfrac{-d}{c})\right \vert
\bigl(\begin{smallmatrix}a & b\\
c & d\end{smallmatrix}\bigr)
\in SL(2,\mathbb{Z})/\{ \pm I_{2}\},\,\tfrac{-d}{c}\in [1,+\infty)\right \}\\
&\quad \cup \left \{  \left.  \tau_{>}(\tfrac{-d}{c})\right \vert
\bigl(\begin{smallmatrix}a & b\\
c & d\end{smallmatrix}\bigr)
\in SL(2,\mathbb{Z})/\{ \pm I_{2}\},\,\tfrac{-d}{c}\in (-\infty,0]\right \}\\
&\subset\mathcal{C}_<\cup\mathcal{C}_>=\overline{\mathcal{D}}\cap F=\overline{\mathcal{D}}\setminus\{\infty\}.
\end{align*}
The proof is complete.
\end{proof}

\section{Monodromy interpretation of the curves}
\label{mono-int}

The purpose of this section is to give a monodromy meaning of the curves from a complex linear ODE.
Let $\sigma(z)=\sigma(z;\tau)$ be the well-known Weierstrass sigma function defined by
$\sigma(z):=\exp \int^{z}\zeta(\xi)d\xi$.
Fix any $\tau\in F_0\setminus\{e^{\pi i/3}\}$. Then $g_2\neq 0$ and we define $\pm q_1$ and $\pm q_2$ (mod $\Lambda_{\tau}$) by
\[\wp(q_1):=\sqrt{g_2/12}, \quad \wp(q_2):=-\sqrt{g_2/12}.\]
Since $\wp''(z)=6\wp(z)^2-g_2/2$, we have $\wp''(q_1)=\wp''(q_2)=0$ and up to a constant,
\[y_1(z):=\frac{1}{\wp''(z)}=\frac{\sigma(z)^4}{\sigma(z-q_1)\sigma(z+q_1)\sigma(z-q_2)\sigma(z+q_2)}
.\]
The addition formula (cf. \cite{Akhiezer}) $\zeta(2z)-2\zeta(z)=\frac{\wp''(z)}{2\wp'(z)}$ implies
\[\zeta(\pm 2q_j)=\pm 2\zeta(q_j),\quad j=1,2,\]
and the addition formula (cf. \cite{Akhiezer})
\[\zeta(u+v)+\zeta(u-v)-2\zeta(u)=\frac{\wp'(u)}{\wp(u)-\wp(v)}\]
yields
\[\zeta(q_i+q_j)+\zeta(q_i-q_j)-2\zeta(q_i)=\frac{\wp'(q_i)}{2\wp(q_i)},\;\{i,j\}=\{1,2\}.\]
Together with $\frac{\sigma'(z)}{\sigma(z)}=\zeta(z)$,
 a direct computation leads to
\[\frac{y_1'(z)}{y_1(z)}=4\zeta(z)-\sum_{j=1}^2(\zeta(z-q_j)+\zeta(z+q_j))\]
and so
\[\frac{y_1''(z)}{y_1(z)}=\left(\frac{y_1'(z)}{y_1(z)}\right)'+\left(\frac{y_1'(z)}{y_1(z)}\right)^2=I(z;\tau),\]
where
\begin{align*}
I(z;\tau):=&12\wp(z;\tau)+2\sum_{j=1}^2(\wp(z-q_j;\tau)+\wp(z+q_j;\tau))\\
&+\sum_{j=1}^2\frac{\wp'(q_j;\tau)^2}{\wp(q_j;\tau)(\wp(z;\tau)-\wp(q_j;\tau))}
\end{align*}
is an even elliptic function. Therefore, the even elliptic function $y_1(z)$ is a solution of the following linear differential equation
\begin{equation}\label{linearODE}
y''(z)=I(z;\tau)y(z),\quad z\in\mathbb{C}.
\end{equation}

To look for another solution of (\ref{linearODE}),
we define a meromorphic function
\[\chi(z):=18\wp(z)^2\wp'(z)+\tfrac{1}{2}g_2\wp'(z)+2g_2^2z-36g_3\zeta(z),\]
which clearly has two quasi-periods:
\begin{equation}\label{chi-1}\chi_1:=\chi(z+1)-\chi(z)=2(g_2^2-18\eta_1 g_3),\end{equation}
\begin{equation}\label{chi-2}\chi_2:=\chi(z+\tau)-\chi(z)=2(g_2^2\tau-18\eta_2 g_3)=\chi_1\phi(\tau),\end{equation}
where $\phi(\tau)$ is precisely the aforementioned function in $(\ref{Cphi})$. A direction computation leads to
\[\chi'(z)=18(\wp^2\wp')'+\tfrac{1}{2}g_2\wp''(z)+2g_2^2+36g_3\wp=7(\wp'')^2=7y_1(z)^{-2}.\]
Thanks to this fact $\chi'(z)=7y_1(z)^{-2}$, it is easy to see that the meromorphic function $y_2(z):=y_1(z)\chi(z)$ is a linearly independent solution of (\ref{linearODE}) with respect to $y_1(z)$. This infers that all singularities $\{0, \pm q_1, \pm q_2\}$ (mod $\Lambda_{\tau}$) of (\ref{linearODE}) are apparent. Recall $E_{\tau}:=\mathbb{C}/\Lambda_{\tau}$. Consequently, the monodromy representation of (\ref{linearODE}) is a group homeomorphism $\rho: \pi_1(E_{\tau})\to SL(2,\mathbb{C})$, i.e. the monodromy group of (\ref{linearODE}) is generated by
\begin{equation}\label{mono}\begin{pmatrix}
1 & 0\\
1 & 1
\end{pmatrix},\quad\begin{pmatrix}
1 & 0\\
D & 1
\end{pmatrix},\quad\text{where}\; D=\phi(\tau),
\end{equation}
because it is easy to see from (\ref{chi-1})-(\ref{chi-2}) that
\begin{equation*}
\begin{pmatrix}
\chi_{1}y_{1}(z+1)\\
y_{2}(z+1)
\end{pmatrix}
=
\begin{pmatrix}
1 & 0\\
1 & 1
\end{pmatrix}%
\begin{pmatrix}
\chi_{1}y_{1}(z)\\
y_{2}(z)
\end{pmatrix}
,
\end{equation*}
\begin{equation*}
\begin{pmatrix}
\chi_{1}y_{1}(z+\tau)\\
y_{2}(z+\tau)
\end{pmatrix}
=
\begin{pmatrix}
1 & 0\\
D & 1
\end{pmatrix}%
\begin{pmatrix}
\chi_{1}y_{1}(z)\\
y_{2}(z)
\end{pmatrix},\quad D:=\frac{\chi_2}{\chi_1}=\phi(\tau).
\end{equation*}
Due to (\ref{mono}), we refer this $D$ as {\it the monodromy data} of (\ref{linearODE}). Remark that if $D=\infty$, i.e. $\chi_1=0$, then $\chi_2\neq 0$ and (\ref{mono}) should be understood as
\[\begin{pmatrix}
1 & 0\\
0 & 1
\end{pmatrix},\quad\begin{pmatrix}
1 & 0\\
1 & 1
\end{pmatrix},\]
because it is easy to see that
\begin{equation*}
\begin{pmatrix}
\chi_{2}y_{1}(z+1)\\
y_{2}(z+1)
\end{pmatrix}
=
\begin{pmatrix}
1 & 0\\
0 & 1
\end{pmatrix}%
\begin{pmatrix}
\chi_{2}y_{1}(z)\\
y_{2}(z)
\end{pmatrix}
,
\end{equation*}
\begin{equation*}
\begin{pmatrix}
\chi_{2}y_{1}(z+\tau)\\
y_{2}(z+\tau)
\end{pmatrix}
=
\begin{pmatrix}
1 & 0\\
1 & 1
\end{pmatrix}%
\begin{pmatrix}
\chi_{2}y_{1}(z)\\
y_{2}(z)
\end{pmatrix}.
\end{equation*}

Now we consider the set of those $\tau$'s in $F_0\setminus\{e^{\pi i/3}\}$ such that the monodromy data $D=\phi(\tau)$ of (\ref{linearODE}) is real-valued or $\infty$:
\[L:=\{\tau\in F_0\setminus\{e^{\pi i/3}\} \,|\, \text{the monodromy data $D\in \mathbb{R}\cup\{\infty\}$}\}.\]
Then the above argument (see particularly (\ref{c-phi})) implies that the three curves coincide with $L$.
\begin{theorem}[Monodromy interpretation] {\ }
\begin{itemize}
\item[(1)]$L=\mathcal{C}_{1}\cup\mathcal{C}_{2}\cup\mathcal{C}_{3}$ and $L\cap F=\mathcal{C}_{<}\cup \mathcal{C}_{>}$.
\item[(2)] Given $\gamma=\bigl(\begin{smallmatrix}a & b\\
c & d\end{smallmatrix}\bigr)
\in \Gamma_{0}(2)$, we let $\gamma\cdot\tilde{\tau}=\frac{a\tilde{\tau}+b}{c\tilde{\tau}+d}$ with $\tilde{\tau}\in F_0$ be a zero of $E_6'(\tau)$ in $\gamma(F_0)$. Then the monodromy data $D$ of (\ref{linearODE}) with $\tau=\tilde{\tau}$ is precisely $\frac{-d}{c}$.
\end{itemize}
\end{theorem}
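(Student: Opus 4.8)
The plan is to observe that both parts are, once the right identity is in hand, a matter of assembling results already proved. The single fact that does all the work is the identification $D=\phi(\tau)$ recorded just above the statement: by \eqref{chi-2} the monodromy data equals $\chi_2/\chi_1=(g_2^2\tau-18\eta_2 g_3)/(g_2^2-18\eta_1 g_3)$, and the Legendre relation $\eta_2=\tau\eta_1-2\pi i$ rewrites this as $\tau+\frac{36\pi i g_3}{g_2^2-18\eta_1 g_3}=\phi(\tau)$, the very meromorphic function studied throughout Sections \ref{twocriticalp}--\ref{distribution-F}. Thus I would phrase the whole proof as a translation of statements about $D$ into statements about $\phi$.

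For part (1), I would start from the definition $L=\{\tau\in F_0\setminus\{e^{\pi i/3}\}\mid D\in\mathbb{R}\cup\{\infty\}\}$ and, using $D=\phi(\tau)$, rewrite it as the set of $\tau\in F_0\setminus\{e^{\pi i/3}\}$ with $\operatorname{Im}\phi(\tau)=0$ or $\phi(\tau)=\infty$. The only point needing a word is the deleted vertex: since $g_2(e^{\pi i/3})=0$ and $\eta_1(e^{\pi i/3})=\tfrac{2\pi}{\sqrt3}$ by \eqref{eta1-rho}, a direct evaluation (the $g_3$ cancels) gives $\phi(e^{\pi i/3})=e^{\pi i/3}-\tfrac{2\pi i}{\eta_1}=\tfrac12-\tfrac{\sqrt3}{2}i\notin\mathbb{R}\cup\{\infty\}$, so removing $e^{\pi i/3}$ removes no point of the locus. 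Hence $L=\{\tau\in F_0\mid \operatorname{Im}\phi(\tau)=0\text{ or }\phi(\tau)=\infty\}$, which is exactly $\mathcal{C}_1\cup\mathcal{C}_2\cup\mathcal{C}_3$ by \eqref{c-phi}. For the second equality I would intersect with $F$ and invoke Lemma \ref{FFF}: by \eqref{C1-F} one has $\mathcal{C}_1\cap F=\emptyset$, while \eqref{C2-F}--\eqref{C3-F} give $\mathcal{C}_{<}=\mathcal{C}_2\cap F$ and $\mathcal{C}_{>}=\mathcal{C}_3\cap F$, whence $L\cap F=\mathcal{C}_{<}\cup\mathcal{C}_{>}$.

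For part (2), write $\tau'=\gamma\cdot\tilde\tau$ with $\tilde\tau\in F_0$. Suppose first $c\neq 0$. Since $\tau'$ is a critical point of $E_6$, it is a zero of $g_2^2-18\eta_1 g_3$ by Ramanujan's formula \eqref{d-eta1}; substituting this into the transformation identity \eqref{g3-fC}, namely $(g_2^2-18\eta_1 g_3)(\tau')=-c(c\tilde\tau+d)^{7}f_{-d/c}(\tilde\tau)$, forces $f_{-d/c}(\tilde\tau)=0$. By the equivalence $f_C(\tau)=0\Leftrightarrow\phi(\tau)=C$ in \eqref{Cphi}, this says $\phi(\tilde\tau)=-d/c$, i.e. $D=\phi(\tilde\tau)=-d/c$. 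When $c=0$ the domain is $\gamma(F_0)=F_0+m$, so by Theorem \ref{uniquezero} the unique critical point is $\tau_\infty+m$ and $\tilde\tau=\tau_\infty$; as $(g_2^2-18\eta_1 g_3)(\tau_\infty)=0$ we get $D=\phi(\tau_\infty)=\infty=-d/c$, matching the convention fixed after \eqref{mono}.

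I expect no deep obstacle here, because the genuine analytic input — the computation $\chi'(z)=7y_1(z)^{-2}$, the apparentness of the singularities $\{0,\pm q_1,\pm q_2\}$, and the resulting monodromy matrices \eqref{mono} — is already completed before the statement, leaving only the matching of $D$ with $\phi$. The one place I would check carefully is the well-definedness of $D$ at the point $\tilde\tau$ appearing in part (2): the ODE \eqref{linearODE} requires $g_2(\tilde\tau)\neq 0$, i.e. $\tilde\tau\neq e^{\pi i/3}$. This is automatic, since Theorem \ref{Unique-pole} places the relevant zeros $\tau_{<}(-d/c)\in F_0^{<}$ and $\tau_{>}(-d/c)\in F_0^{>}$, so that $\operatorname{Re}\tilde\tau\neq\tfrac12$, and in the $c=0$ case $\tilde\tau=\tau_\infty=\tfrac12+ib_\infty$ with $b_\infty\in(\tfrac12,\tfrac{\sqrt3}{2})$, which is not $\tfrac{\sqrt3}{2}$. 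The only remaining subtlety is purely notational, namely reading $-d/c=\infty$ and $\phi=\infty$ coherently, which the discussion surrounding \eqref{mono} has already arranged.
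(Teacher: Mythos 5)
Your proposal is correct and follows essentially the same route as the paper, which gives no separate proof beyond observing that $D=\chi_2/\chi_1=\phi(\tau)$ and pointing to \eqref{c-phi} and Lemma \ref{FFF}; your translation of both parts into statements about $\phi$, via the equivalence $f_C(\tau)=0\Leftrightarrow\phi(\tau)=C$ and the identity \eqref{g3-fC}, is exactly the intended argument. The extra checks you supply (that $e^{\pi i/3}$ is not in the locus, and the $c=0$ convention $D=\infty$) are correct details the paper leaves implicit.
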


\subsection*{Acknowledgements}
The research of the author was supported by NSFC (No. 12222109).

\end{document}